\theoremstyle{plain}
\newtheorem{theorem}{Theorem}[section]
\newtheorem{lemma}{Lemma}[section]
\newtheorem{proposition}{Proposition}[section]
\newtheorem{corollary}{Corollary}[section]
\newtheorem{definition}{Definition}[section]
\theoremstyle{definition}
\newtheorem{remark}{Remark}[section]
\newcommand{\keywords}{\textbf{Key words. }\medskip}
\newcommand{\subjclass}{\textbf{MSC 2020. }\medskip}
\renewcommand{\abstract}{\textbf{Abstract. }\medskip}
\numberwithin{equation}{section}
\DeclareMathOperator{\diam}{diam}
\DeclareMathOperator{\Iso}{Iso}
\DeclareMathOperator{\Fix}{Fix}
\newcommand{\mfu}{\mathfrak{U}}
\newcommand{\mct}{\mathfrak{T}}
\newcommand{\RR}{\mathbb{R}}
\newcommand{\abs}[1]{\left\vert#1\right\vert}
\newcommand{\Sp}[1]{\operatorname{Sp}(#1)}
\def\we{\mathrel{\stackrel{\rm w}=}}
\begin{document}

\title{Hereditary properties of finite  ultrametric spaces}

\author{Evgeniy A. Petrov}

\maketitle

\begin{abstract}
A characterization of finite homogeneous ultrametric spaces and finite ultrametric spaces generated by unrooted labeled trees is found in terms of representing trees. A characterization of finite ultrametric spaces having perfect strictly $n$-ary trees is found in terms of special graphs connected with the space. Further, we give a detailed survey of some special classes of finite ultrametric spaces, which were considered in the last ten years, and study their hereditary properties. More precisely, we are interested in the following question. Let $X$ be an arbitrary finite ultrametric space from some given class. Does every subspace of $X$ also belong to this class?
\end{abstract}

\subjclass{Primary 54E35, 05C05;}

\keywords{finite ultrametric space, representing tree, homogeneous ultrametric space, labeled tree, perfect strictly $n$-ary tree}

\section{Introduction}

In 2001 at the Workshop on General Algebra~\cite{WGA} the attention of experts on the theory of lattices was paid to the following problem of I.~M.~Gelfand: using graph theory describe up to isometry all finite ultrametric spaces. An appropriate representation of ultrametric spaces $X$ by monotone rooted trees $T_X$ was proposed in~\cite{GurVyal(2012)} that can be considered in some sense as a solution of above mentioned problem. The question naturally arises about applications of this representation. One such application is the structural characteristic of finite ultrametric spaces for which the Gomory-Hu inequality becomes an equality, see~\cite{PD(UMB)}. The ultrametric spaces for which its representing trees are strictly binary were described in~\cite{DPT}. A characterization of finite ultrametric spaces which are as rigid as possible was also obtained, see~\cite{DPT(Howrigid)}. Extremal properties of finite ultrametric spaces and related them properties of monotone rooted trees have been found in~\cite{DP20}. Se also papers~\cite{DP19, DP18, Do19, Do20BBMS, Do20, P18} for some another properties of ultrametric spaces based on analysis of representing trees. The present paper is also a contribution to this line of studies.

The paper is organized as follows.
The first section of the paper contains the main definitions and the required technical results.
In Section 2 and Section 4 finite homogenous ultrametric spaces and finite ultrametric spaces defined by unrooted labeled trees are described in terms of representing trees.
In Section 3 we give a characterizations of finite ultrametric spaces having perfect strictly $n$-ary trees in terms of special graphs $G_{r,X}$ connected with the space $X$.
In Section 5 we give a detailed survey of some special classes of finite ultrametric spaces which were considered in the last ten years. In Section 6 from the above mentioned classes we distinguish the classes such that every subspace of a space from a given class also belongs to this class.

Recall some definitions from the theory of metric spaces and the graph theory.
An \textit{ultrametric} on a set $X$ is a function $d\colon X\times X\rightarrow \mathbb{R}^+$, $\mathbb R^+ = [0,\infty)$, such that for all $x,y,z \in X$:
\begin{itemize}
\item [\textup{(i)}] $d(x,y)=d(y,x)$,
\item [\textup{(ii)}] $(d(x,y)=0)\Leftrightarrow (x=y)$,
\item [\textup{(iii)}] $d(x,y)\leq \max \{d(x,z),d(z,y)\}$.
\end{itemize}
Inequality (iii)  is often called the {\it strong triangle inequality}.
The pair $(X,d)$ is called an \emph{ultrametric space.}
The \emph{spectrum} of an ultrametric space $(X,d)$ is the set  $$\operatorname{Sp}(X)=\{d(x,y)\colon x,y \in  X\}.$$
For simplicity we will always assume that $X\cap \Sp{X}=\varnothing$.
The spectrum $\operatorname{Spec}(X,x)$ of the space $X$ at the point $x$ is the set
$$
\operatorname{Spec}(X,x)= \{d(x,y)\, | \, y\in X \}.
$$
The quantity
$$
\diam X=\sup\{d(x,y)\colon x,y\in X\}.
$$
is the \emph{diameter} of the space $(X,d)$.

Recall that a \textit{graph} is a pair $(V,E)$ consisting of a nonempty set $V$ and a (probably empty) set $E$  elements of which are unordered pairs of different points from $V$. For a graph $G=(V,E)$, the sets $V=V(G)$ and $E=E(G)$ are called \textit{the set of vertices} and \textit{the set of edges}, respectively. Recall that a \emph{path} is a nonempty graph $P=(V,E)$ of the form
$$
V=\{x_0,x_1,...,x_k\}, \quad E=\{\{x_0,x_1\},...,\{x_{k-1},x_k\}\},
$$
where all $x_i$ are different.
A connected graph without cycles is called a \emph{tree}. A tree $T$ may have a distinguished vertex called the \emph{root}; in this case $T$ is called a \emph{rooted tree}.  An \emph{$n$-ary tree} is a rooted tree, such that the degree of each of its vertices is at most $n+1$.
A rooted tree is \emph{strictly $n$-ary} if every internal node has exactly $n$ children. In the case $n=2$ such tree is called \emph{strictly binary}.
Generally we  follow terminology used in~\cite{BM}.

Let $k\geqslant 2$. A nonempty graph $G$ is called \emph{complete $k$-partite} if its vertices can be divided into $k$ disjoint nonempty sets $X_1,...,X_k$ so that there are no edges joining the vertices of the same set $X_i$ and any two vertices from different $X_i,X_j$, $1\leqslant i,j \leqslant k$ are adjacent. In this case we write $G=G[X_1,...,X_k]$.
We shall say that $G$ is a {\it complete multipartite graph} if there exists
 $k \geqslant 2$ such that $G$ is complete $k$-partite, cf. \cite{Di}.

\begin{definition}[\!\!\cite{DDP(P-adic)}]\label{d2}
Let $(X,d)$ be a finite ultrametric space. Define the graph $G_X^d$ as follows $V(G_X^d)=X$ and
$$
(\{u,v\}\in E(G_X^d))\Leftrightarrow(d(u,v)=\diam X).
$$
We call $G_X^d$ the \emph{diametrical graph} of $X$.
\end{definition}
\begin{definition}\label{d14}
Let $(X,d)$ be an ultrametric space with $|X|\geqslant 2$ and  the spectrum $\operatorname{Sp}(X)$ and let $r\in \operatorname{Sp}(X)$ be nonzero. Define by $G_{r,X}$ a graph for which $V(G_{r,X})=X$ and
$$
(\{u,v\}\in E(G_{r,X}))\Leftrightarrow (d(u,v)=r).
$$

For $r=\operatorname{diam} X$ it is clear that $G_{r,X}$ is the diametrical graph of $X$.
\end{definition}

\begin{theorem}[\!\!\cite{DDP(P-adic)}]\label{t13}
Let $(X,d)$ be a finite ultrametric space, $|X|\geqslant 2$. Then $G_X^d$ is complete multipartite.
\end{theorem}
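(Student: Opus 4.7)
The plan is to show that the relation ``not joined by a diametric edge'' behaves like an equivalence relation on $X$; its classes will then be the parts of the multipartite decomposition.

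More precisely, define $\sim$ on $X$ by
\[
x \sim y \quad \Longleftrightarrow \quad x = y \text{ or } d(x,y) < \diam X.
\]
Reflexivity and symmetry of $\sim$ are immediate from the ultrametric axioms. The main point is transitivity, and this is where the strong triangle inequality does the work: if $x \sim y$ and $y \sim z$ with $x \ne z$, then $d(x,y) < \diam X$ and $d(y,z) < \diam X$, and the strong triangle inequality gives
\[
d(x,z) \leq \max\{d(x,y), d(y,z)\} < \diam X,
\]
so $x \sim z$. (Note that the strict inequalities survive because we take a maximum, not a sum — this is exactly the feature of ultrametrics the proof exploits.)

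Let $X_1, \ldots, X_k$ be the equivalence classes of $\sim$. By the very definition of $\sim$, two distinct vertices $u,v$ of $G_X^d$ are adjacent (i.e.\ satisfy $d(u,v) = \diam X$) if and only if they lie in different classes $X_i, X_j$. Thus the partition $\{X_1,\ldots,X_k\}$ exhibits $G_X^d$ as a complete multipartite graph $G_X^d = G[X_1,\ldots,X_k]$, provided $k \geq 2$.

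To finish, I would check $k \geq 2$: since $X$ is finite with $|X| \geq 2$, the supremum in the definition of $\diam X$ is attained by some pair of distinct points $x_0, y_0 \in X$, and by construction $x_0$ and $y_0$ then lie in distinct equivalence classes. No step looks delicate; the only genuine ingredient is the ultrametric strong triangle inequality, used exactly once to get transitivity. The potential pitfall is a purely bookkeeping one, namely making sure that the case $k = 1$ is ruled out so that the conclusion matches the paper's definition of ``complete multipartite'' (which requires $k \geq 2$), and this is handled by the attainment of the diameter.
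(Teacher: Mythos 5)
Your proof is correct and complete: the equivalence relation $x \sim y \Leftrightarrow (x=y$ or $d(x,y)<\diam X)$, with transitivity supplied by the strong triangle inequality and $k\geqslant 2$ guaranteed by attainment of the diameter on a finite space, is exactly the standard argument for this result. The paper itself imports Theorem~\ref{t13} from \cite{DDP(P-adic)} without reproving it, and the proof given there proceeds along essentially the same lines as yours, so there is nothing further to reconcile.
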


With every finite ultrametric space $(X, d)$, we can associate  a labeled rooted $n$-ary tree $T_X$ by the following rule (see~\cite{PD(UMB)}). If $X=\{x\}$ is a one-point set, then $T_X$ is the rooted tree consisting of one node $X$ labeled by $0$. Let $|X|\geqslant 2$.
According to Theorem~\ref{t13} we have $G^d_X = G^d_X[X_1,...,X_k]$.
In this case the root $X$ of the tree $T_X$ is labeled by $\diam X$ and, moreover, $T_X$ has $k$ nodes $X_1,...,X_k$ of the first level with the labels
\begin{equation}\label{e2.7}
l(X_i)=
\diam X_i, \quad i = 1,...,k.
\end{equation}
The nodes of the first level indicated by $0$ are leaves, and those indicated by strictly positive numbers are internal nodes of the tree $T_X$. If the first level has no internal nodes, then the tree $T_X$ is constructed. Otherwise, by repeating the above-described procedure with $X_i$ corresponding to the internal nodes of the first level, we obtain the nodes of the second level, etc. Since $|X|$ is finite, all vertices on some level will be leaves and the construction of $T_X$ is completed.
The above-constructed labeled tree $T_X$ is called the \emph{representing tree} of the space $(X, d)$.
To underline that $l_X(v)$, $v\in V(T_X)$, is a labeling function of the representing tree $T_X$ we shall write $(T_X,l_X)$.
The rooted tree $T_X$ without the labels we will denote by $\overline{T}_X$.

Let $T$ be a rooted tree. For every vertex $v$ of $T$ we denote by $T_v$ the subtree of $T$ such that \(v\) is the root of \(T_v\),
\begin{equation*}\label{e2.5}
V(T_v) = \{u \in V(T) \colon u = v \text{ or \(u\) is a successor of } v \text{ in } T\},
\end{equation*}
and satisfying
\begin{equation*}\label{e2.6}
(\{v_1, v_2\} \in E(T_v)) \Leftrightarrow (\{v_1, v_2\} \in E(T))
\end{equation*}
for all \(v_1\), \(v_2 \in V(T_v)\).
Denote by \(\overline{L}(T_v)\) the set of all leaves of \(T_v\). If \(T = T_X\) is the representing tree of a finite ultrametric space \((X, d)\), \(v \in V(T_X)\) and $\overline{L}(T_v) = \{\{x_1\}, \ldots, \{x_n\}\}$, then for simplicity we write $L(T_v) = \{x_1, \ldots, x_n\}$. Consequently, the equality $v = L(T_v)$ holds for every \(v \in V(T_X)\).

Let  $v$ be a node of the rooted tree $T$. Denote by $\delta^+(v)$ the \emph{out-degree} of $v$, i.e., $\delta^+(v)$ is the number of children of $v$, and by $\operatorname{lev}(v)$ denote the level of a node $v \in V(T)$.

Note that the correspondence between ultrametric spaces and trees or tree-like structures is well known, cf.~\cite{GV, GNS00,GurVyal(2012),H04,Le03,Ho01,BS17}.

\begin{definition}\label{d15}
Let $G=(V,E)$ be nonempty graph, and let $V_0$ be the set (possibly empty) of all isolated vertices of the graph $G$. Denote by $G'$ the subgraph of the graph $G$, generated by the set $V\backslash V_0$.
\end{definition}

\begin{lemma}[\!\!\cite{DP19}]\label{c25}
Let $(X, d)$ be a finite ultrametric space with $|X|\geqslant 2$ and let $r \in \Sp{X}\setminus \{0\}$. Then the graph $G'_{r, X}$ is a union of $p$ complete multipartite graphs $G^1_{r}, \ldots, G^p_{r}$, where $p$ is the number of all distinct nodes \(x_1\), \(\ldots\), \(x_p\) of \(T_X\) labeled by \(r\) and this union is disjoint if \(p \geqslant 2\). Moreover, for every \(i \in \{1, \ldots, p\}\), the graph \(G_{r}^i\) is complete \(k\)-partite,
\begin{equation}\label{eq25}
G_{r}^i = G_{r}^i [L_{T_{x_{i1}}}, \ldots, L_{T_{x_{ik}}}],
\end{equation}
where \(k = \delta^{+}(x_i)\) and $x_{i1}, \ldots, x_{ik}$ are the direct successors of $x_i$.
\end{lemma}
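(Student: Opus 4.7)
The plan is to identify edges of $G_{r,X}$ with pairs of points of $X$ whose least common ancestor in $T_X$ bears the label $r$, and then decompose $G'_{r,X}$ according to this ancestor.

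First I would establish the fundamental property of the representing tree: for distinct $u, v \in X$, $d(u,v) = l_X(w)$, where $w$ is the least common ancestor in $T_X$ of the leaves $\{u\}$ and $\{v\}$. This follows directly from the recursive construction of $T_X$ via the diametrical graph: if $w$ is the LCA then $u$ and $v$ lie in the leaf-sets of two distinct children of $w$, and by Theorem~\ref{t13} together with \eqref{e2.7} the distance between points belonging to different parts of $G^d_{L(T_w)}$ equals $\diam L(T_w) = l_X(w)$. Consequently $\{u,v\} \in E(G_{r,X})$ if and only if the LCA of $\{u\}$ and $\{v\}$ in $T_X$ is labelled by $r$.

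Next, let $x_1, \ldots, x_p$ be the pairwise distinct nodes of $T_X$ labelled by $r$. A key observation is that these nodes are pairwise incomparable in the ancestor relation. Indeed, by \eqref{e2.7}, the label of a child of an internal node $v$ is the diameter of a proper subset of $L(T_v)$, hence strictly smaller than $l_X(v)$; iterating, labels strictly decrease along every descending path. Therefore no $x_j$ can be a descendant of another $x_i$, which forces the leaf sets $L(T_{x_1}), \ldots, L(T_{x_p})$ to be pairwise disjoint.

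I would then describe each piece $G^i_r$. For the node $x_i$ with direct successors $x_{i1}, \ldots, x_{ik}$ where $k = \delta^+(x_i)$, any pair $(u,v)$ with $u \in L(T_{x_{ij}})$ and $v \in L(T_{x_{il}})$, $j \neq l$, has LCA equal to $x_i$ and hence $d(u,v) = r$; conversely, pairs of distinct points within the same $L(T_{x_{ij}})$ have LCA strictly below $x_i$ with strictly smaller label, so their distance is less than $r$. This exactly produces the complete $k$-partite structure \eqref{eq25}. Finally, $u$ is non-isolated in $G_{r,X}$ iff some proper ancestor of $\{u\}$ in $T_X$ is labelled by $r$, iff $u$ belongs to one of the sets $L(T_{x_i})$. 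Combined with the disjointness of the $L(T_{x_i})$'s and the LCA characterization of edges, this shows $G'_{r,X} = G^1_r \sqcup \cdots \sqcup G^p_r$.

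The only real obstacle is the incomparability argument for distinct nodes sharing the label $r$: once one knows labels strictly decrease along root-to-leaf paths, the remaining verification is routine bookkeeping on the LCA description of distances.
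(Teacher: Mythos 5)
This lemma is quoted in the paper with the citation \cite{DP19}; no proof of it appears in the present text, so there is nothing in-paper to compare your argument against line by line. On its own merits your proof is correct and is the natural one: every distance is the label of the least common ancestor of the corresponding leaves (a specialization of Theorem~\ref{t2.9}, using that labels strictly decrease away from the root), the nodes labelled $r$ are pairwise incomparable and hence have disjoint leaf sets, and sorting the edges of $G_{r,X}$ by which $r$-labelled node is the ancestor yields exactly the decomposition \eqref{eq25}. One step should be tightened: your justification that labels strictly decrease along descending paths --- ``the diameter of a proper subset is strictly smaller'' --- is not a valid implication in general; the strict inequality holds because each child of an internal node $v$ is a \emph{part} of the complete multipartite diametrical graph $G^d_{L(T_v)}$ of Theorem~\ref{t13}, so no pair of points inside it attains $\diam L(T_v)$. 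With that correction the argument is complete.
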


Let $(X,d)$ be an ultrametric space. Recall that a \emph{ball} with a radius $r \geqslant 0$ and a center $c\in X$ is the set
\[
B_r(c)=\{x\in X\colon d(x,c)\leqslant r\}.
\]
The \emph{ballean} $\mathbf{B}_X$ of the ultrametric space $(X,d)$ is the set of all balls of $(X,d)$. Every one-point subset of $X$ belongs to $\mathbf{B}_X$,  this is a \emph{singular} ball in~$X$.

The following proposition claims that the ballean of a finite ultrametric space $(X,d)$ is the vertex set of the representing tree $T_X$.

\begin{proposition}[\!\!\cite{P(TIAMM)}]\label{lbpm}
Let $(X,d)$ be a finite nonempty ultrametric space with the representing tree $T_X$. Then the following statements hold.
\begin{itemize}
\item [\textup{(i)}] $L_{T_v}$ belongs to $\mathbf{B}_X$ for every node $v\in V(T_X)$.
\item [\textup{(ii)}] For every $B \in \mathbf{B}_X$ there exists the unique node $v$ such that $L_{T_v}=B$.
\end{itemize}
\end{proposition}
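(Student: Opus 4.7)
The plan is to prove both items by induction on $|X|$, exploiting the recursive structure of $T_X$. The key observation extracted from Theorem~\ref{t13} is the following: if $|X|\geqslant 2$ and $G_X^d = G_X^d[X_1,\dots,X_k]$, then for $x\in X_i$ and $y\in X_j$ with $i\neq j$ one has $d(x,y)=\diam X$, whereas two points of the same $X_i$ are at distance at most $\diam X_i<\diam X$. Consequently the labels along any root-to-leaf path of $T_X$ are strictly decreasing, and every ball $B_r(c)$ of $X$ with $r<\diam X$ is contained in the unique part $X_i$ containing $c$ and agrees there with the ball of radius $r$ about $c$ in the subspace $(X_i, d|_{X_i})$. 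Moreover, by construction, the subtree of $T_X$ rooted at the node $X_i$ is precisely the representing tree of $(X_i, d|_{X_i})$.

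For (i), the base case $|X|=1$ is immediate. For the inductive step, the root $X$ itself gives $L_{T_X}=X=B_{\diam X}(c)$ for any $c\in X$, so it is a ball; any other node $v$ lies in some subtree $T_{X_i}$, and the inductive hypothesis applied to $(X_i, d|_{X_i})$ shows that $L_{T_v}$ is a ball of $X_i$. The first-paragraph observation then promotes it to a ball of $X$, since its radius is at most $\diam X_i<\diam X$.

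For (ii), existence is again by induction. Given $B\in\mathbf{B}_X$, either $B=X$ (take $v$ to be the root), or $B=B_r(c)\subsetneq X$; in the latter case one must have $r<\diam X$, and by the observation $B$ is a ball of $(X_i, d|_{X_i})$ for the part $X_i\ni c$, so the inductive hypothesis provides a node $v$ of $T_{X_i}\subseteq T_X$ with $L_{T_v}=B$. Uniqueness follows from the injectivity of $v\mapsto L_{T_v}$: two distinct nodes of $T_X$ are either comparable, in which case one leaf set is properly nested in the other (every internal node has at least two children by Theorem~\ref{t13}, making the nesting strict), or incomparable, in which case their leaf sets are disjoint and nonempty.

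The main technical point is the step in the first paragraph: a ball of radius $r<\diam X$ lies entirely inside the multipartite class $X_i$ of its center and coincides with the corresponding ball of $X_i$. This is where the multipartite structure of $G_X^d$ is genuinely used; once it is in hand, both halves of the proposition reduce to standard tree bookkeeping via the induction.
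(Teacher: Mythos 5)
Your proof is correct. Note that the paper itself gives no proof of Proposition~\ref{lbpm} --- it is imported from \cite{P(TIAMM)} --- so there is no in-text argument to compare against; your induction on $|X|$ via the complete multipartite decomposition of $G_X^d$ is exactly the argument one would expect from the recursive construction of $T_X$, and the two delicate points (that a ball of radius $r<\diam X$ lies inside a single part $X_i$ and coincides with the corresponding ball of the subspace, and that the strict nesting of leaf sets for comparable nodes uses $\delta^+(v)\geqslant 2$ from Theorem~\ref{t13}) are both handled correctly.
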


\begin{theorem}[\!\!\cite{DP18}]\label{t2.9}
Let \((X, d)\) be a finite ultrametric space and let \(x_1\) and \(x_2\) be two different points of \(X\). If \(P\) is the path joining the different leaves \(\{x_1\}\) and \(\{x_2\}\) in \((T_X,l_X)\), then we have
\[
d(x_1, x_2) = \max_{v \in V(P)} l_X(v).
\]
\end{theorem}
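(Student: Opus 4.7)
The plan is to locate the lowest common ancestor $w$ of the two leaves $\{x_1\}$ and $\{x_2\}$ in $T_X$, and then show two things: that $l_X(w)$ attains the maximum of the labels along $P$, and that $d(x_1,x_2)$ equals this label.

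First I would observe that $P$ has the shape $\{x_1\} = v_0, v_1, \ldots, v_s = w, \ldots, v_m = \{x_2\}$, where each subpath from a leaf up to $w$ consists of ancestors in $T_X$. By Proposition~\ref{lbpm}, every node $v_i$ on $P$ corresponds to a ball $L_{T_{v_i}}\in \mathbf{B}_X$, and the ancestor relation gives the inclusion chain $L_{T_{v_0}}\subseteq L_{T_{v_1}}\subseteq\cdots\subseteq L_{T_{v_s}}$, and similarly on the descending side. Since $l_X(v_i)=\diam L_{T_{v_i}}$ by the construction recalled in~\eqref{e2.7}, the labels are monotone nondecreasing on each half of $P$ and achieve their maximum at $w$. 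Thus $\max_{v\in V(P)} l_X(v) = l_X(w)$.

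Next I would identify $d(x_1,x_2)$ with $l_X(w)$. Let $Y:=L_{T_w}$, so that $l_X(w)=\diam Y$. Since $w$ is the lowest common ancestor, $x_1$ and $x_2$ belong to two \emph{different} direct successors $w_i, w_j$ of $w$. By the construction of $T_X$ (the children of $w$ are exactly the parts $Y_1,\ldots,Y_k$ of the complete multipartite decomposition of the diametrical graph $G^d_Y$ guaranteed by Theorem~\ref{t13}), the points $x_1$ and $x_2$ lie in distinct parts of this partition, hence they form an edge of $G^d_Y$. By Definition~\ref{d2} this means $d(x_1,x_2)=\diam Y = l_X(w)$, completing the proof.

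The main obstacle is the last step, where one has to translate the graph-theoretic fact ``$x_1$ and $x_2$ lie in different children-subtrees of $w$'' into the metric statement ``$d(x_1,x_2)=\diam L_{T_w}$.'' Everything else is a monotonicity argument about ball inclusions along the path. Alternatively, one could carry out the whole argument by induction on $|X|$: the base case $|X|=2$ is trivial, and the inductive step reduces to either the case where $x_1,x_2$ lie in a common ball $Y_i$ corresponding to a child of the root (so $P$ lies in the subtree $T_{Y_i}$ and the claim follows from the inductive hypothesis applied to $(Y_i,d|_{Y_i})$), or the case where $x_1, x_2$ lie in different $Y_i$, $Y_j$ (so $w$ is the root, $l_X(w)=\diam X = d(x_1,x_2)$ by Theorem~\ref{t13}, and the labels of the remaining vertices of $P$ are diameters of subballs, hence at most $\diam X$). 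I would prefer the direct LCA argument because it avoids having to verify monotonicity of diameters along ancestor chains twice.
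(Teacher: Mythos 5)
Your proof is correct. Note that the paper itself gives no proof of this statement: Theorem~\ref{t2.9} is imported from~\cite{DP18} as a known tool, so there is nothing internal to compare against. Your argument is the standard one and is sound on its own terms: the two halves of $P$ are ancestor chains, so by Proposition~\ref{lbpm} and the labeling rule~\eqref{e2.7} the labels along each half are diameters of nested balls and hence peak at the lowest common ancestor $w$; and since $x_1$, $x_2$ lie in distinct parts of the complete multipartite decomposition of $G^d_{L_{T_w}}$ furnished by Theorem~\ref{t13}, they are adjacent in that diametrical graph, giving $d(x_1,x_2)=\diam L_{T_w}=l_X(w)$. The only point worth making explicit is that $w$ is an internal node (it contains both points, so $|L_{T_w}|\geqslant 2$), which is what licenses applying the multipartite decomposition to it; your sketch implicitly uses this. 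Your alternative induction on $|X|$ would also work and is essentially the same argument unrolled.
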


Recall that metric spaces $(X,d)$ and $(Y, \rho)$ are \emph{isometric} if there is a bijection $f\colon X\to Y$ such that the equality
$$
d(x,y) = \rho(f(x),f(y))
$$
holds for all $x$, $y \in X$.

\begin{definition}
Let $T_1$ and $T_2$ be rooted trees with the roots $v_1$ and $v_2$, respectively. A bijective function $\Psi\colon V(T_1)\to V(T_2)$ is an isomorphism of $T_1$ and $T_2$ if
$$
(\{x,y\}\in E(T_1))\Leftrightarrow(\{\Psi(x),\Psi(y)\}\in E(T_2))
$$
for all $x,y \in V(T_1)$ and $\Psi(v_1)=v_2$. If there exists an isomorphism of rooted trees $T_1$ and $T_2$, then we will write $T_1\simeq T_2$.
\end{definition}

We shall say that trees $(T_X,l_X)$ and $(T_Y,l_Y)$ are isomorphic as labeled rooted trees if  $\overline{T}_X \simeq \overline{T}_Y$ with an isomorphism $\Psi$ and $l_X(x)=l_Y(\Psi(x))$, $x\in V(T_X)$.

\begin{theorem}[\!\!\cite{DPT(Howrigid)}]\label{l3.3}
Let $(X, d)$ and $(Y, \rho)$ be nonempty finite ultrametric spaces. Then the representing trees $T_X$ and $T_Y$ are isomorphic as labeled rooted trees if and only if $(X, d)$ and $(Y, \rho)$ are isometric.
\end{theorem}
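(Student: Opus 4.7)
The plan is to handle the two implications separately, using the ballean/multipartite structure for the forward direction and the path-distance formula of Theorem \ref{t2.9} for the converse.

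For the "if" direction, suppose $f\colon X\to Y$ is an isometry. I would argue by induction on $|X|$. If $|X|=1$, then both $T_X$ and $T_Y$ consist of a single node labeled $0$, and the claim is immediate. For $|X|\geqslant 2$, since $f$ is an isometry we have $\diam X = \diam Y$, so the roots of $T_X$ and $T_Y$ carry the same label. By Theorem \ref{t13} the diametrical graphs $G_X^d=G_X^d[X_1,\ldots,X_k]$ and $G_Y^\rho=G_Y^\rho[Y_1,\ldots,Y_m]$ are complete multipartite. Because the partition into parts is determined purely by the relation $d(u,v)=\diam X$ (which is preserved by $f$), the isometry $f$ induces a bijection between the families $\{X_1,\ldots,X_k\}$ and $\{Y_1,\ldots,Y_m\}$; in particular $k=m$ and, after reindexing, $f$ restricts to an isometry $X_i\to Y_i$ for each $i$. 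By the construction of the representing tree, the subtrees rooted at the first-level nodes of $T_X$ are precisely the representing trees $T_{X_i}$, and similarly for $T_Y$. The induction hypothesis yields labeled-tree isomorphisms $\Psi_i\colon V(T_{X_i})\to V(T_{Y_i})$, which glue together (via sending the root of $T_X$ to the root of $T_Y$) to a labeled-tree isomorphism $\Psi\colon V(T_X)\to V(T_Y)$.

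For the "only if" direction, suppose $\Psi\colon V(T_X)\to V(T_Y)$ is an isomorphism of labeled rooted trees. Since $\Psi$ preserves adjacency and sends the root of $T_X$ to the root of $T_Y$, it preserves levels and therefore maps leaves of $T_X$ bijectively to leaves of $T_Y$. By the convention $v = L(T_v)$, the leaves of $T_X$ are exactly the singletons $\{x\}$ with $x\in X$, and similarly for $T_Y$. Hence $\Psi$ determines a bijection $f\colon X\to Y$ by the rule $\Psi(\{x\})=\{f(x)\}$. For any $x_1,x_2\in X$ with $x_1\neq x_2$, let $P$ be the unique path in $T_X$ joining $\{x_1\}$ and $\{x_2\}$. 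Then $\Psi$ carries $P$ to a path $\Psi(P)$ in $T_Y$ with endpoints $\{f(x_1)\}$ and $\{f(x_2)\}$, and $l_X(v)=l_Y(\Psi(v))$ for every $v\in V(P)$. Applying Theorem \ref{t2.9} on both sides gives
\[
d(x_1,x_2)=\max_{v\in V(P)}l_X(v)=\max_{v\in V(P)}l_Y(\Psi(v))=\max_{w\in V(\Psi(P))}l_Y(w)=\rho(f(x_1),f(x_2)),
\]
so $f$ is an isometry.

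I do not anticipate a genuine obstacle: the main point to be careful about is that the first-level partition used to build $T_X$ really coincides with the multipartite decomposition of $G_X^d$, which is guaranteed by the definition of the representing tree itself, so the inductive step in the first direction goes through cleanly. The converse direction is essentially a one-line consequence of the path-formula in Theorem \ref{t2.9} once one observes that a labeled-tree isomorphism preserves leaves, levels and labels along paths.
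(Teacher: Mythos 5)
Your proof is correct. Note, however, that the paper does not prove this theorem at all: it is quoted from \cite{DPT(Howrigid)} as a known result, so there is no in-paper argument to compare against. Your two-step argument --- induction on $|X|$ via the complete multipartite decomposition of the diametrical graph for the direction ``isometry $\Rightarrow$ labeled isomorphism'', and the path-maximum formula of Theorem~\ref{t2.9} for the converse --- is the natural proof and matches the standard treatment in the cited source; the only point worth making explicit is that the parts of a complete multipartite graph are uniquely determined as the maximal independent sets, which justifies the claim that the isometry induces a bijection between the first-level partitions.
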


\section{Finite homogeneous ultrametric spaces}

A relational structure $\textbf R$ is homogeneous if every isomorphism between finite induced
substructures of $\textbf R$ extends to an automorphism of the whole structure $\textbf R$ itself. Introduced by Fra\"{\i}ss\'{e}~\cite{Fr54} and J\'{o}nsson ~\cite{Jo60}, homogeneous structures are now playing a fundamental role in model theory. According to the terminology of Fra\"{\i}ss\'{e}~\cite{Fr00}, a metric space $X$ is homogeneous if every isometry $f$ whose domain and range are finite subsets of $X$ extends to surjective isometry of $X$ onto $X$.
Some results related to indivisible homogeneous ultrametric spaces can be found in~\cite{DLPS07, DLPS08}.
The more deep investigation of homogeneous ultrametric spaces was done in~\cite{DLPS16}, where in particular the following result was obtained.

\begin{theorem}[\!\!\cite{DLPS16}]\label{t31}
An ultrametric space $X$ is homogeneous if and only if the following two conditions hold:
\begin{itemize}
  \item [\textup{(1)}] $\operatorname{Spec}(X,x)= \operatorname{Spec}(X,x')$ for all $x, x' \in X$.
  \item [\textup{(2)}] Balls of $X$ with the same kind are isometric.
\end{itemize}
\end{theorem}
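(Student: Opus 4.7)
The plan is to address the two implications separately, with the main effort devoted to sufficiency.

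For necessity, suppose $X$ is homogeneous. For (1), given $x,x'\in X$, the singleton map $x\mapsto x'$ is an isometry between finite subsets, so by homogeneity it extends to a surjective isometry $\Phi\colon X\to X$; since $\Phi$ is bijective, $\operatorname{Spec}(X,x)=\{d(x,y):y\in X\}=\{d(\Phi(x),\Phi(y)):y\in X\}=\operatorname{Spec}(X,x')$. For (2), reading ``same kind'' as having equal radius (equivalently, equal label in the corresponding node of the representing tree), take two such balls $B_1,B_2$ with centers $c_1,c_2$; extending $c_1\mapsto c_2$ to a surjective isometry $\Phi$, one checks that $\Phi$ sends the ball of radius $r$ around $c_1$ onto the ball of radius $r$ around $c_2$, so $\Phi|_{B_1}$ is an isometry $B_1\to B_2$.

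For sufficiency, assume (1) and (2). Given an isometry $f\colon A\to B$ between finite subsets, I would extend $f$ to a surjective isometry of $X$ by a back-and-forth argument that adds one point at a time. The heart of the matter is the following extension step: given $a\in X\setminus A$, produce $b\in X\setminus B$ with $d(a,a')=d(b,f(a'))$ for every $a'\in A$. Let $a_0\in A$ minimize $d(a,\cdot)$ on $A$, set $r=d(a,a_0)$, and work inside $B_r(a_0)$. Points $a'\in A\setminus B_r(a_0)$ cause no trouble: the ultrametric inequality together with the choice of $a_0$ forces $d(a,a')=d(a_0,a')$, and the same identity with $b,f(a_0),f(a')$ in place of $a,a_0,a'$ holds automatically as soon as $b\in B_r(f(a_0))$, since $f$ is an isometry. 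The task thus reduces to finding $b\in B_r(f(a_0))$ whose distances to $f(A\cap B_r(a_0))$ match those of $a$ to $A\cap B_r(a_0)$.

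By Condition (2), $B_r(a_0)$ and $B_r(f(a_0))$ are isometric, which supplies the pool of candidate points; by Condition (1) applied inside these balls, every required distance is actually realized. The main obstacle is to choose the isometry $\psi\colon B_r(a_0)\to B_r(f(a_0))$ so that it agrees with $f$ on $A\cap B_r(a_0)$: this is a smaller instance of the same extension problem, so I would close the loop by induction on the depth of $T_X$ (equivalently, on $|X|$ in the finite case, or by a back-and-forth / Fra\"iss\'e-style argument in general). Setting $b=\psi(a)$ completes the single extension step, and iterating it in both directions produces the required surjective isometry of $X$ extending $f$.
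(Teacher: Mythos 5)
The paper does not prove this statement at all: Theorem~\ref{t31} is quoted from~\cite{DLPS16} and used as a black box, so your proposal can only be judged on its own terms. The necessity half is essentially fine. (Note that ``same kind'' means same diameter, attained in both or in none, not same radius; but since every ultrametric ball equals the closed ball of radius $\operatorname{diam}B$ about any of its points, your argument applied with $r=\operatorname{diam}B_1=\operatorname{diam}B_2$ does go through.) The sufficiency half follows the natural one-point-extension scheme, and most of it is sound, including the observation that the points of $A\setminus B_r(a_0)$ take care of themselves once $b\in B_r(f(a_0))$.

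The genuine gap is that your recursion does not always descend. You replace $X$ by the ball $B_r(a_0)$ with $r=\min_{a'\in A}d(a,a')$, but when $a$ is at distance $\operatorname{diam}X$ from every point of $A$ you get $r=\operatorname{diam}X$ and $B_r(a_0)=X$, so $A\cap B_r(a_0)=A$ and the ``smaller instance'' is the original instance; the induction on the depth of $T_X$ (or on $|X|$) is circular exactly there. This case really occurs (take $A$ disjoint from the part of the diametrical graph containing $a$) and needs a separate argument: by Theorem~\ref{t13} the graph $G_X^d$ is complete multipartite, $A$ misses the part containing $a$, and since $f$ preserves the relation $d<\operatorname{diam}X$ it carries the partition of $A$ into parts bijectively onto that of $f(A)$, so $f(A)$ also misses some part; any $b$ in a missed part completes the extension step. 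Two smaller points should also be made explicit: that $B_r(a_0)$ and $B_r(f(a_0))$ have the same kind is not automatic from (2) alone --- it follows from (1), because $\operatorname{diam}B_r(x)=\sup\left(\operatorname{Spec}(X,x)\cap[0,r]\right)$ --- and your inductive hypothesis is homogeneity of the ball, so you must check that (1) and (2) are inherited by balls and then produce $\psi$ by composing a self-isometry of $B_r(a_0)$ extending $g^{-1}\circ f|_{A\cap B_r(a_0)}$ with an arbitrary isometry $g\colon B_r(a_0)\to B_r(f(a_0))$ supplied by (2).
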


Two balls have the same kind if they have the same diameter which is attained in both or in none. Clearly, for finite metric spaces this definition can be reduced to the condition that balls have the same diameter since in this case diameter is always attained.

The following theorem gives us a characterization of finite homogeneous ultrametric spaces in terms of representing trees, see Figure~\ref{fig3}.

\begin{theorem}\label{t23}
Let $X$ be a finite ultrametric space with the representing tree $(T_X,l_X)$. The space $X$ is homogeneous if and only if the following two conditions hold.
\begin{itemize}
  \item [\textup{(i)}] For all different $x,y \in V(T_X)$ with $\operatorname{lev}(x)=\operatorname{lev}(y)$ the equality $l_X(x)=l_X(y)$ holds.
  \item [\textup{(ii)}] For all different $x,y \in V(T_X)$ with $\operatorname{lev}(x)=\operatorname{lev}(y)$ the equality $\delta^{+}(x)=\delta^{+}(y)$ holds.
\end{itemize}
\end{theorem}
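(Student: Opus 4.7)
The strategy is to derive conditions (i) and (ii) from Theorem~\ref{t31} and conversely, using Theorem~\ref{t2.9} and Theorem~\ref{l3.3} as bridges. A key preliminary remark is that labels strictly decrease along every root-to-leaf path of $T_X$: if $u$ is a child of an internal node $v$, then $L(T_u)$ is one part of the complete multipartite decomposition of $L(T_v)$ appearing in Lemma~\ref{c25}, so $\diam L(T_u) < \diam L(T_v)$, i.e.\ $l_X(u) < l_X(v)$. Combining this with Theorem~\ref{t2.9}, for any leaf $\{x\}$ at depth $h$,
\[
\operatorname{Spec}(X, x) = \{0\} \cup \{\, l_X(v) : v \text{ is a proper ancestor of } \{x\} \text{ in } T_X \,\},
\]
a set of cardinality exactly $h+1$ whose nonzero elements are the strictly decreasing ancestor labels.

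\textbf{Necessity.} Assume $X$ is homogeneous, so by Theorem~\ref{t31} conditions (1) and (2) hold. From (1), every $\operatorname{Spec}(X, x)$ has the same cardinality, forcing all leaves to share a common depth $h$; moreover, the label of the level-$k$ ancestor of any leaf equals the $(k+1)$-th largest element of this common spectrum, and therefore depends only on $k$. Since every vertex $v \in V(T_X)$ is the level-$\operatorname{lev}(v)$ ancestor of some descendant leaf, (i) follows. For (ii), fix $v_1, v_2$ at a common level $k$. By (i) the balls $L(T_{v_1})$ and $L(T_{v_2})$ have equal diameter, hence are isometric by (2). The subtree $T_{v_i}$ is, by the recursive nature of the construction, the representing tree of the ball $L(T_{v_i})$, so Theorem~\ref{l3.3} gives an isomorphism of labeled rooted trees $T_{v_1} \simeq T_{v_2}$; in particular $\delta^{+}(v_1) = \delta^{+}(v_2)$.

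\textbf{Sufficiency.} Assume (i) and (ii). Condition (i) makes the label $\lambda_k$ of a level-$k$ vertex a function of $k$ alone, and a level-$k$ vertex is a leaf iff $\lambda_k = 0$, so all leaves share a common depth $h$ and $\lambda_0 > \lambda_1 > \cdots > \lambda_h = 0$. The preliminary remark then gives $\operatorname{Spec}(X, x) = \{0, \lambda_{h-1}, \ldots, \lambda_0\}$ for every $x$, proving (1). For (2), two balls $L(T_{v_1})$, $L(T_{v_2})$ of equal diameter must, by distinctness of the $\lambda_k$, come from vertices at a common level. Using (i) and (ii) the labels and out-degrees of the vertices in $T_{v_1}$ and $T_{v_2}$ match level by level, so an easy top-down recursion builds a labeled-rooted-tree isomorphism $T_{v_1} \simeq T_{v_2}$; Theorem~\ref{l3.3} converts this into an isometry $L(T_{v_1}) \to L(T_{v_2})$, establishing (2). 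Applying Theorem~\ref{t31} yields homogeneity.

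\textbf{Main obstacle.} The proof presents no serious difficulty; it is essentially a bookkeeping translation between the spectral/isometry conditions of Theorem~\ref{t31} and the level-wise data of $T_X$. The two ingredients that deserve explicit mention are (a) the strict decrease of labels along root-to-leaf paths, which is what lets the per-level label $\lambda_k$ be recovered from $\operatorname{Spec}(X, x)$, and (b) the fact that $T_v$ is the representing tree of the ball $L(T_v)$, which is what lets Theorem~\ref{l3.3} be applied at the subtree level.
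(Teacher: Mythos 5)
Your proof is correct and follows essentially the same route as the paper: both directions are translations of conditions (1) and (2) of Theorem~\ref{t31} into tree language via the path-label description of $\operatorname{Spec}(X,x)$ (Theorem~\ref{t2.9}), the ball--subtree correspondence (Proposition~\ref{lbpm}), and the isometry--isomorphism equivalence (Theorem~\ref{l3.3}). You merely spell out the sufficiency direction in more detail than the paper, which dismisses it as following easily from the construction of representing trees.
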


\begin{proof}
Let $X$ be homogenous.  It is clear that for any $x\in X$ the set $\operatorname{Spec}(X,x)$ coincide with the set of all labels on the unique path from the root of $T_X$ to the leaf $\{x\}$. Hence, according to condition (1) of Theorem~\ref{t31} and to the construction of $T_X$ (all the labels on a path from the root of $T_X$ to any leaf strictly decrease) we immediately obtain condition (i) 
and the fact that all the leaves of $T_X$ are on one and the same level.
Further, let $B_1$ and $B_2$ be any two balls with equal diameters. According to Proposition~\ref{lbpm}  there exist two inner nodes $b_1$ and $b_2$ of the tree $T_X$ such that $L_{T_{b_1}}=B_1$ and $L_{T_{b_2}}=B_2$. According to condition (i) the nodes $b_1$ and $b_2$ are on one and the same level. By condition (2) of Theorem~\ref{t31} $B_1$ and $B_2$ are isometric. Hence, by Theorem~\ref{l3.3} the subtrees $T_{b_1}$ and $T_{b_2}$ are isomorphic and this means that $\delta^{+}(b_1)=\delta^{+}(b_2)$ which establishes condition (ii).

The converse implication easily follows from the construction of representing trees and from Proposition~\ref{lbpm}.
\end{proof}

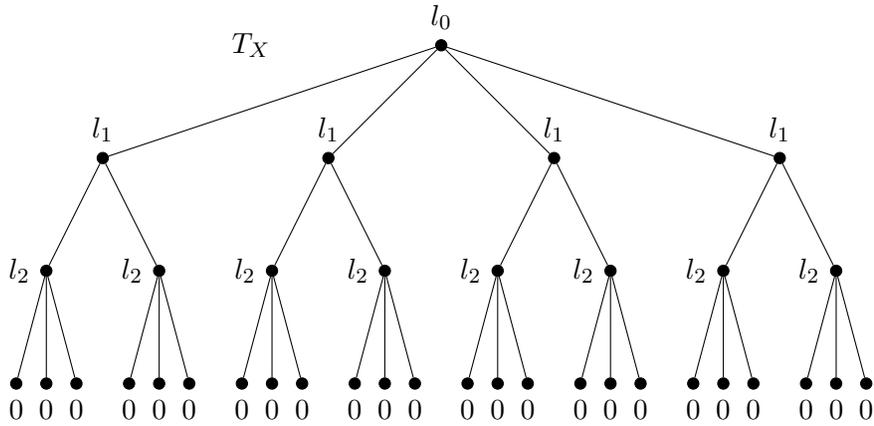
\begin{figure}[h]
\begin{center}
\begin{tikzpicture}
\tikzstyle{level 1}=[level distance=15mm,sibling distance=30mm]
\tikzstyle{level 2}=[level distance=15mm,sibling distance=15mm]
\tikzstyle{level 3}=[level distance=15mm,sibling distance=4mm]
\tikzset{
solid node/.style={circle,draw,inner sep=1.5,fill=black},
hollow node/.style={circle,draw,inner sep=1.5}
}

\node [label=left:{\(T_X\)}] at (-2,0) {};
\node (1) [solid node, label=above:{\(l_0\)}] at (0,0) {}
child {node[solid node, label=above:{\(l_1\)}]{}
        child{node[solid node, label=left:{\(l_{2}\)}] {}
            child{node[solid node, label=below:{\(0\)}] {} }
            child{node[solid node, label=below:{\(0\)}] {} }
            child{node[solid node, label=below:{\(0\)}] {} }
        }
    	child{node[solid node, label=left:{\(l_{2}\)}] {}
            child{node[solid node, label=below:{\(0\)}] {} }
            child{node[solid node, label=below:{\(0\)}] {} }
            child{node[solid node, label=below:{\(0\)}] {} }
        }
      }
child {node[solid node, label=above:{\(l_1\)}]{}
        child{node[solid node, label=left:{\(l_{2}\)}] {}
            child{node[solid node, label=below:{\(0\)}] {} }
            child{node[solid node, label=below:{\(0\)}] {} }
            child{node[solid node, label=below:{\(0\)}] {} }
        }
    	child{node[solid node, label=left:{\(l_{2}\)}] {}
            child{node[solid node, label=below:{\(0\)}] {} }
            child{node[solid node, label=below:{\(0\)}] {} }
            child{node[solid node, label=below:{\(0\)}] {} }
        }
      }
child {node[solid node, label=above:{\(l_1\)}]{}
        child{node[solid node, label=left:{\(l_{2}\)}] {}
            child{node[solid node, label=below:{\(0\)}] {} }
            child{node[solid node, label=below:{\(0\)}] {} }
            child{node[solid node, label=below:{\(0\)}] {} }
        }
    	child{node[solid node, label=left:{\(l_{2}\)}] {}
            child{node[solid node, label=below:{\(0\)}] {} }
            child{node[solid node, label=below:{\(0\)}] {} }
            child{node[solid node, label=below:{\(0\)}] {} }
        }
      }
child {node[solid node, label=above:{\(l_1\)}]{}
        child{node[solid node, label=left:{\(l_{2}\)}] {}
            child{node[solid node, label=below:{\(0\)}] {} }
            child{node[solid node, label=below:{\(0\)}] {} }
            child{node[solid node, label=below:{\(0\)}] {} }
        }
    	child{node[solid node, label=left:{\(l_{2}\)}] {}
            child{node[solid node, label=below:{\(0\)}] {} }
            child{node[solid node, label=below:{\(0\)}] {} }
            child{node[solid node, label=below:{\(0\)}] {} }
        }
      };

\end{tikzpicture}
\end{center}
\caption{An example of the labeled representing tree $T_X$ of a finite homogeneous ultrametric space.}
\label{fig3}
\end{figure}

Analyzing the structural properties of representing trees of homogeneous finite ultrametric spaces it is possible also to distinguish the following two classes of finite ultrametric spaces: the spaces $X$ for which all the leaves of $T_X$ are on the same level and the spaces $X$ for which all the labels of inner nodes of $T_X$ being at the same level are equal.

The proofs of the following two propositions follow directly from the fact that for every $x\in X$ the set $\operatorname{Spec}(X,x)$ coincides with the set of labels of vertices lying at the unique path from the leaf $\{x\}$ to the root of $T_X$ and the fact that these labels monotonically increase along this path.

\begin{proposition}\label{p210}
Let $(X,d)$ be a finite ultrametric space with $|X|\geqslant 2$. The following conditions are equivalent.
\begin{itemize}
  \item [\textup{(i)}] All leaves of $T_X$ are on the same level.
  \item [\textup{(ii)}] $|\operatorname{Spec}(X,x)|= |\operatorname{Spec}(X,x')|$ for all $x, x' \in X$.
\end{itemize}
\end{proposition}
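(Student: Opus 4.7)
The plan is to establish the simple identity $|\operatorname{Spec}(X,x)| = \operatorname{lev}(\{x\}) + 1$ for every $x \in X$, from which both implications of the equivalence fall out immediately.

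First I would fix $x \in X$ and consider the unique path $P_x$ in $T_X$ from the leaf $\{x\}$ up to the root. For any $y \neq x$, Theorem~\ref{t2.9} tells us that $d(x,y) = \max_{v \in V(P)} l_X(v)$, where $P$ is the path joining $\{x\}$ and $\{y\}$; but $P$ ascends from $\{x\}$ along $P_x$ until the lowest common ancestor of $\{x\}$ and $\{y\}$, and since the labels in $T_X$ strictly decrease going from the root down to any leaf, the maximum is attained precisely at that common ancestor. Hence $d(x,y)$ equals the label $l_X(v)$ of some internal ancestor $v$ of $\{x\}$ lying on $P_x$, and conversely every internal ancestor $v$ of $\{x\}$ arises in this way (choose any $y$ with $\{y\} \in L_{T_v}$ but $\{y\} \notin L_{T_w}$, where $w$ is the child of $v$ lying on $P_x$). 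Including $d(x,x) = 0$, which is the label of the leaf $\{x\}$ itself, we obtain
\[
\operatorname{Spec}(X,x) = \{\, l_X(v) : v \in V(P_x)\,\}.
\]

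Next I would use the fact that the labels along $P_x$ strictly increase from $0$ at the leaf to $\operatorname{diam} X$ at the root, so all $\operatorname{lev}(\{x\}) + 1$ labels on $P_x$ are distinct. Therefore
\[
|\operatorname{Spec}(X,x)| = |V(P_x)| = \operatorname{lev}(\{x\}) + 1.
\]

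Both implications of the proposition are now immediate. If (i) holds, every leaf $\{x\}$ sits at the common level $\ell$, so $|\operatorname{Spec}(X,x)| = \ell + 1$ is the same for all $x$, giving (ii). Conversely, if (ii) holds, then $\operatorname{lev}(\{x\}) = |\operatorname{Spec}(X,x)| - 1$ is independent of $x$, so all leaves lie on a common level, giving (i).

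There is no real obstacle here: the argument is a direct unpacking of the construction of $T_X$ together with Theorem~\ref{t2.9}. The only point that deserves a moment's care is the claim that every internal ancestor of $\{x\}$ on $P_x$ actually contributes a distance $d(x,y)$ for some $y \in X$; this is handled by picking $y$ from a leaf in a sibling subtree, which is nonempty since every internal node of $T_X$ has at least two children by the construction of the representing tree.
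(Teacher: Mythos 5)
Your proof is correct and follows essentially the same route as the paper, which justifies Propositions~\ref{p210} and~\ref{p211} in one sentence by observing that $\operatorname{Spec}(X,x)$ coincides with the set of labels on the unique path from the leaf $\{x\}$ to the root and that these labels are strictly monotone along that path. You have simply filled in the details of that observation (in particular the identity $|\operatorname{Spec}(X,x)|=\operatorname{lev}(\{x\})+1$ and the existence of a witness $y$ in a sibling subtree), and all of these details check out.
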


\begin{proposition}\label{p211}
Let $(X,d)$ be a finite ultrametric space with $|X|\geqslant 2$ and let $\Sp{X}=\{0,s_1,...,s_n\}$ with $s_1<\cdots <s_n$.
The following conditions are equivalent.
\begin{itemize}
  \item [\textup{(i)}] All labels of the inner nodes of $T_X$ being at the same level are equal.
  \item [\textup{(ii)}] For every $x\in X$ we have $\operatorname{Spec}(X,x)= \{0,s_k,s_{k+1},...,s_n\}$ for some $k\in \{1,...,n\}$.
\end{itemize}
\end{proposition}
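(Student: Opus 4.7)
The plan is to use the observation already highlighted before the proposition: for every $x\in X$, the set $\operatorname{Spec}(X,x)$ consists of $0$ together with the labels of all internal ancestors of the leaf $\{x\}$ in $T_X$, and those labels strictly decrease as one descends from the root towards $\{x\}$. Note also that, since $\operatorname{Sp}(X)=\{0,s_1,\dots,s_n\}$, Theorem~\ref{t2.9} forces every $s_i$ with $i\geqslant 1$ to appear as a label of some internal node of $T_X$ (otherwise $s_i$ could never be realised as a distance), and the root is labelled $s_n=\operatorname{diam} X$.

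For the implication \textup{(i)}$\Rightarrow$\textup{(ii)}, I would denote by $L_j$ the common label of all internal nodes on level $j$, which is well-defined on every level that carries an internal node. Strict decrease of labels along root-to-leaf paths gives $L_0>L_1>\cdots>L_h$, where $h$ is the largest level containing an internal node. Because every $s_i$ must arise as $L_j$ for some $j$, and the $L_j$'s are pairwise distinct, the set $\{L_0,\dots,L_h\}$ coincides with $\{s_1,\dots,s_n\}$; combined with monotonicity this forces $h=n-1$ and $L_j=s_{n-j}$. Consequently, if the leaf $\{x\}$ lies on level $d$, its internal ancestors sit on levels $0,1,\dots,d-1$ and carry labels $s_n,s_{n-1},\dots,s_{n-d+1}$, so $\operatorname{Spec}(X,x)=\{0,s_{n-d+1},\dots,s_n\}$, which is of the required form with $k=n-d+1$.

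For the converse \textup{(ii)}$\Rightarrow$\textup{(i)}, pick any internal node $v$ on level $j$ and any leaf $\{x\}\in L(T_v)$. Condition \textup{(ii)} says $\operatorname{Spec}(X,x)=\{0,s_{k(x)},\dots,s_n\}$; since this equals $\{0\}$ together with the labels of the internal ancestors of $\{x\}$, and those labels strictly decrease from the root downwards, they must be listed in the order $s_n,s_{n-1},\dots,s_{k(x)}$ along that root-to-leaf path. Hence the label appearing on level $j'$ of the path is exactly $s_{n-j'}$. Applying this to our node $v$ on level $j$, we obtain $l_X(v)=s_{n-j}$, independently of the choice of $v$; this is precisely condition \textup{(i)}.

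I do not expect any genuine obstacle: both directions are essentially a bookkeeping exercise converting between ``labels on the root-to-$\{x\}$ path'' and $\operatorname{Spec}(X,x)$. The only mildly delicate point is making sure, in \textup{(i)}$\Rightarrow$\textup{(ii)}, that the distinct labels $L_0,\dots,L_h$ really exhaust $\{s_1,\dots,s_n\}$; this is where Theorem~\ref{t2.9} (every $s_i$ must be an internal label) is invoked and where the hypothesis $|X|\geqslant 2$ enters to guarantee that the root itself is an internal node.
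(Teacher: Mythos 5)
Your proof is correct and takes essentially the same approach as the paper: the paper disposes of this proposition (together with Proposition~\ref{p210}) in a single sentence, appealing to the fact that $\operatorname{Spec}(X,x)$ is $\{0\}$ together with the labels on the root-to-$\{x\}$ path and that these labels are strictly monotone along that path, and your argument is just a careful, correct expansion of exactly that observation (including the needed point that every $s_i$ occurs as an internal label).
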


The previous two propositions immediately give the following.

\begin{corollary}\label{c211}
Let $(X,d)$ be a finite ultrametric space with $|X|\geqslant 2$.
The following conditions are equivalent.
\begin{itemize}
  \item [\textup{(i)}] All labels of the inner nodes of $T_X$ being at the same level are equal and all leaves of $T_X$ are on the same level.
  \item [\textup{(ii)}]  $\operatorname{Spec}(X,x)= \operatorname{Spec}(X,x')$ for all $x, x' \in X$.
\end{itemize}
\end{corollary}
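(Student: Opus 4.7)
The plan is to derive the corollary by combining Propositions~\ref{p210} and~\ref{p211} with one additional observation: the spectra at individual points collectively exhaust $\Sp{X}$, so requiring equality of $\operatorname{Spec}(X,x)$ across all $x$ pins down what the common value must be.

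First I would establish $(\mathrm{ii})\Rightarrow(\mathrm{i})$. Assume $\operatorname{Spec}(X,x)=\operatorname{Spec}(X,x')$ for all $x,x'\in X$. Equality of these sets certainly forces equality of their cardinalities, so Proposition~\ref{p210} yields that all leaves of $T_X$ lie on one and the same level. For the label condition I would observe that the common value of $\operatorname{Spec}(X,x)$ must equal $\{0,s_1,\ldots,s_n\}$: every $s_i$ is realised as $s_i=d(y,z)$ for some pair in $X$, hence $s_i\in\operatorname{Spec}(X,y)$, hence $s_i$ lies in the common spectrum; together with $0=d(x,x)$ this forces the common spectrum to be all of $\Sp{X}$. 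This has the form $\{0,s_k,\ldots,s_n\}$ demanded by Proposition~\ref{p211}(ii) (with $k=1$), so the label condition in (i) follows.

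For the reverse direction I would assume both clauses of (i). Proposition~\ref{p211} supplies, for each $x\in X$, an index $k(x)\in\{1,\ldots,n\}$ with $\operatorname{Spec}(X,x)=\{0,s_{k(x)},\ldots,s_n\}$. Proposition~\ref{p210} then tells me that $|\operatorname{Spec}(X,x)|=n-k(x)+2$ does not depend on $x$, so $k(x)$ is a constant $k$, and all the sets $\operatorname{Spec}(X,x)$ coincide with $\{0,s_k,\ldots,s_n\}$, proving (ii).

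I do not expect any real obstacle; the corollary is essentially a bookkeeping statement built on top of the two preceding propositions. The only mild subtlety worth flagging is that Proposition~\ref{p211} only guarantees the \emph{existence} of an index $k$ per point, whereas (ii) demands a uniform spectrum, so Proposition~\ref{p210} is exactly what is needed to synchronise the index $k(x)$ across $x$. In the opposite direction, the trick of recognising that any common value of $\operatorname{Spec}(X,\cdot)$ must coincide with $\Sp{X}$ is what lets Proposition~\ref{p211} fire with $k=1$.
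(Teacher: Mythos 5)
Your proof is correct and follows the paper's route exactly: the paper derives this corollary immediately from Propositions~\ref{p210} and~\ref{p211}, which is precisely what you do. The only thing you add is the (correct and necessary) bookkeeping observation that a common value of $\operatorname{Spec}(X,\cdot)$ must equal all of $\Sp{X}$, which is the detail the paper leaves implicit.
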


In the following proposition we consider a slight modification of representing trees of finite homogeneous ultrametric spaces.
First, we preserve for such trees condition (i) of Theorem~\ref{t23} and instead of condition (ii) we consider that all leaves of a representing tree are at the same level.

\begin{proposition}\label{t210}
Let $(X,d)$ be a finite ultrametric space with $|X|\geqslant 2$ and let all leaves of $T_X$ be on the same level. The following conditions are equivalent.
\begin{itemize}
  \item [\textup{(i)}] All labels of the inner nodes of $\ T_X$ being at the same level are equal.
  \item [\textup{(ii)}] $V(G'_{r,X})=X$ for every nonzero $r\in \Sp{X}$.
\end{itemize}
\end{proposition}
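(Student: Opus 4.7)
The plan is to first translate condition~(ii) via Lemma~\ref{c25}, which identifies $V(G'_{r,X})$ with $\bigcup_{i=1}^{p} L_{T_{x_i}}$, where $x_1,\ldots,x_p$ are the distinct nodes of $T_X$ labeled by $r$. Because $r\neq 0$ forces every such $x_i$ to be an inner node, the equality $V(G'_{r,X})=X$ becomes the purely combinatorial statement that every leaf of $T_X$ has an ancestor labeled by~$r$. I will work with this reformulation throughout.

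For (i) $\Rightarrow$ (ii), let $L$ be the common level of all leaves, pick $r\in\Sp{X}\setminus\{0\}$, and take any inner node $u$ of $T_X$ with label $r$; it lies at some level $\ell<L$. By~(i), every node at level $\ell$ also has label $r$, and none of them is a leaf since $\ell<L$. Each leaf has a unique level-$\ell$ ancestor, which is therefore labeled by $r$; thus $V(G'_{r,X})=X$.

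The converse rests on a counting argument. Every root-to-leaf path $P$ passes through exactly one node at each of the levels $0,1,\ldots,L$, the first $L$ of which are inner nodes whose labels strictly decrease along $P$; hence $P$ displays $L$ pairwise distinct values of $\Sp{X}\setminus\{0\}$. By the reformulation of~(ii), every element of $\Sp{X}\setminus\{0\}$ must appear on every such path, so $|\Sp{X}\setminus\{0\}|\leqslant L$, and the reverse inequality is already witnessed by any single path. Consequently $|\Sp{X}\setminus\{0\}|=L$ and the labels on every root-to-leaf path form the same strictly decreasing sequence $s_L>s_{L-1}>\cdots>s_1$ enumerating $\Sp{X}\setminus\{0\}$. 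In particular the label at level $\ell$ must be $s_{L-\ell}$ on every path, so all inner nodes at level $\ell$ share this label, which is~(i).

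I do not expect any serious obstacle; the only details to verify are that $\Sp{X}\setminus\{0\}$ coincides with the set of inner-node labels of $T_X$ (which follows from Theorem~\ref{t2.9} combined with the strict decrease of labels along root-to-leaf paths) and that the hypothesis \emph{all leaves at the same level} is genuinely used in both directions, once to guarantee the existence of a level-$\ell$ ancestor and once to pin down the uniform path length $L$ in the counting step.
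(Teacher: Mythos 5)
Your proof is correct. The forward implication is essentially the paper's: identify $V(G'_{r,X})$ with $\bigcup_i L_{T_{x_i}}$ via Lemma~\ref{c25} and use the fact that, all leaves being at level $L$, every node at the level $\ell<L$ of a node labeled $r$ is an inner node labeled $r$, so these subtrees cover all leaves. The converse, however, is a genuinely different argument. The paper proceeds level by level by contradiction: given two inner nodes $x,y$ at the same level with $l(x)>l(y)$, it observes that no node labeled $l(x)$ can be an ancestor of a leaf of $T_y$ (ancestors of $y$ carry strictly larger labels, descendants strictly smaller ones), so $L_{T_y}\cap V(G'_{l(x),X})=\varnothing$, contradicting (ii); it then repeats this at each subsequent level. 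Your version replaces this with a single global pigeonhole count: each root-to-leaf path carries exactly $L$ inner nodes with strictly decreasing labels, while (ii) forces all of $\Sp{X}\setminus\{0\}$ onto every such path, whence $|\Sp{X}\setminus\{0\}|=L$ and the label at level $\ell$ is forced to be the $(\ell+1)$-st largest spectrum value on every path. This buys a cleaner, non-inductive argument (avoiding the paper's somewhat informal ``arguing as above for the nodes of next levels'') and the additional byproduct that the number of nonzero spectrum values equals the common leaf level, which ties in nicely with Proposition~\ref{p211}; it even shows that the hypothesis on the leaves is partly redundant in this direction, since (ii) alone forces all root-to-leaf paths to have equal length. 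The only points you rightly flag as needing verification --- that every nonzero $r\in\Sp{X}$ labels some inner node (Theorem~\ref{t2.9}) and that labels strictly decrease along root-to-leaf paths (construction of $T_X$) --- are both standard facts available in the paper, so there is no gap.
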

\begin{proof}
(i)$\Rightarrow$(ii) Let all inner nodes $x_1,...,x_p$ being at the same level are labeled by $r$.
We can easily see that $L_{T_{x_1}},...,L_{T_{x_p}}=X$. Using this fact and Lemma~\ref{c25} we obtain a desired implication.

(ii)$\Rightarrow$(i) In the case where the number of levels of $T_X$ is equal to 1 this implication is evident. Suppose the number of levels is more than 1.  Let condition (ii) hold and let $x$ and $y$ be some inner nodes being at the first level such that $l(x)\neq l(y)$. Without loss of generality suppose $l(x)>l(y)$. If $V(G'_{l(y),X})\neq X$ then we have a contradiction. Assume that $V(G'_{l(y),X})= X$. Consider the graph $G'_{l(x),X}$. Since the nodes $x$ and $y$ are on the same level it is easy to see that $L_{T_y}\not\subseteq V(G'_{l(x),X})$ which contradicts to (ii). Hence $l(x)=l(y)$. Arguing as above for the nodes of next levels we complete the proof.
\end{proof}
\begin{corollary}
Let $(X,d)$ be a finite homogeneous ultrametric space. Then $V(G'_{r,X})=X$ for every $r\in \Sp{X}\setminus \{0\}$.
\end{corollary}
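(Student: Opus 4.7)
The plan is to deduce this corollary directly from Theorem~\ref{t23} combined with the implication (i)$\Rightarrow$(ii) of Proposition~\ref{t210}. The strategy is to verify that a homogeneous finite ultrametric space satisfies the two hypotheses used on the left side of that implication, namely: all leaves of $T_X$ are on the same level, and all inner nodes at the same level carry equal labels.

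First I would argue that, if $X$ is homogeneous, all leaves of $T_X$ lie at the same level. By Theorem~\ref{t23}(i), any two distinct vertices of $T_X$ on the same level are labeled identically. Since leaves carry the label $0$ and inner nodes carry strictly positive labels (by construction of the representing tree), no level can simultaneously contain a leaf and an inner node. Consequently, all leaves must sit at one common (maximal) level. Second, the condition that all inner nodes at a given level share the same label is just a restatement of Theorem~\ref{t23}(i) restricted to inner nodes.

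Having verified these two properties, I am exactly in the situation of Proposition~\ref{t210}: the standing assumption of that proposition (all leaves at the same level) is met, and its condition (i) is met. Applying (i)$\Rightarrow$(ii), I obtain $V(G'_{r,X})=X$ for every nonzero $r\in\Sp{X}$, which is the desired conclusion.

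There is no serious obstacle here; the argument is a short combination of two earlier results. The only point that deserves a word of care is the justification that leaves cannot share a level with inner nodes, which relies on the convention that leaves are precisely the nodes labeled $0$ while internal labels are strictly positive (as recorded in the construction of $T_X$ following~\eqref{e2.7}).
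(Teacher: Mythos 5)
Your proposal is correct and follows exactly the route the paper intends: the corollary is stated immediately after Proposition~\ref{t210} precisely because conditions (i) and (ii) of Theorem~\ref{t23} guarantee its hypotheses (all leaves on one level, equal labels on each level), and your careful justification that no level can mix leaves with inner nodes is the only point needing comment, which you handle properly. Nothing further is required.
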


\section{Spaces for which the representing trees are perfect}

Following~\cite{DADS} we shall call strictly $n$-ary tree $T$ \emph{perfect} if all leaves of $T$ are on the same level.
It is clear that in general case the representing trees of finite homogeneous ultrametric spaces are not perfect.
They are perfect if condition (ii) of Theorem~\ref{t23} is replaced by the following more strict condition: for every inner node $x$ of the representing tree $T_X$ we have $\delta^+(x)=n$.

The aim of this section is to describe spaces for which the representing trees are perfect in terms of graphs $G'_{r,X}$. In the following proposition we have a description of such spaces in the special case when the internal labeling of $T_X$ is injective.

\begin{figure}[ht]
\begin{center}
\begin{tikzpicture}
\tikzstyle{level 1}=[level distance=10mm,sibling distance=30mm]
\tikzstyle{level 2}=[level distance=10mm,sibling distance=10mm]
\tikzstyle{level 3}=[level distance=15mm,sibling distance=3mm]
\tikzset{
solid node/.style={circle,draw,inner sep=1.5,fill=black},
hollow node/.style={circle,draw,inner sep=1.5}
}

\node [label=left:{\(T\)}] at (-2,0) {};
\node (1) [solid node, label=above:{}] at (0,0) {}
child {node[solid node, label=left:{}]{}
        child{node[solid node, label=left:{}] {}
            child{node[solid node, label=below:{}] {} }
            child{node[solid node, label=below:{}] {} }
            child{node[solid node, label=below:{}] {} }
        }
    	child{node[solid node, label=left:{}] {}
            child{node[solid node, label=below:{}] {} }
            child{node[solid node, label=below:{}] {} }
            child{node[solid node, label=below:{}] {} }
        }
        child{node[solid node, label=left:{}] {}
            child{node[solid node, label=below:{}] {} }
            child{node[solid node, label=below:{}] {} }
            child{node[solid node, label=below:{}] {} }
        }
      }
child {node[solid node, label=left:{}]{}
        child{node[solid node, label=left:{}] {}
            child{node[solid node, label=below:{}] {} }
            child{node[solid node, label=below:{}] {} }
            child{node[solid node, label=below:{}] {} }
        }
    	child{node[solid node, label=left:{}] {}
            child{node[solid node, label=below:{}] {} }
            child{node[solid node, label=below:{}] {} }
            child{node[solid node, label=below:{}] {} }
        }
        child{node[solid node, label=left:{}] {}
            child{node[solid node, label=below:{}] {} }
            child{node[solid node, label=below:{}] {} }
            child{node[solid node, label=below:{}] {} }
        }
      }
child {node[solid node, label=left:{}]{}
        child{node[solid node, label=left:{}] {}
            child{node[solid node, label=below:{}] {} }
            child{node[solid node, label=below:{}] {} }
            child{node[solid node, label=below:{}] {} }
        }
        child{node[solid node, label=left:{}] {}
            child{node[solid node, label=below:{}] {} }
            child{node[solid node, label=below:{}] {} }
            child{node[solid node, label=below:{}] {} }
        }
    	child{node[solid node, label=left:{}] {}
            child{node[solid node, label=below:{}] {} }
            child{node[solid node, label=below:{}] {} }
            child{node[solid node, label=below:{}] {} }
        }
      };
\end{tikzpicture}
\caption{An example of a perfect strictly 3-ary tree $T$.}
\label{fig4}
\end{center}
\end{figure}
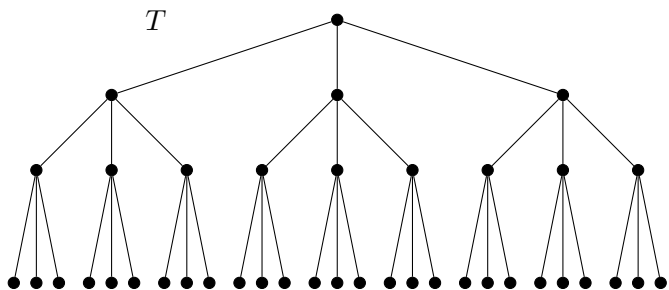

\begin{proposition}\label{t4}
Let $(X,d)$ be a finite ultrametric space with $|X|\geqslant 2$ and let $T_X$ be its representing tree such that all the labels of different internal nodes of $T_X$ are different. The following conditions are equivalent.
\begin{itemize}
\item [\textup{(i)}] $T_X$ is a perfect strictly $n$-ary tree.
\item [\textup{(ii)}] $G_{r,X}'= G_{r,X}'[X_1,...,X_n]$ with $|X_1|=\cdots =|X_n|$ for every nonzero $r\in \Sp{X}$.
\end{itemize}
\end{proposition}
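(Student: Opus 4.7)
The plan is to leverage Lemma~\ref{c25} under the simplifying hypothesis that distinct internal nodes of $T_X$ carry distinct labels. Under this injectivity assumption, for every nonzero $r\in\Sp{X}$ there is a unique internal node $x_r$ with $l_X(x_r)=r$, so the number $p$ from Lemma~\ref{c25} equals $1$ and $G'_{r,X}$ itself is a single complete $k$-partite graph whose parts are exactly the leaf sets $L_{T_{x_{r1}}},\ldots,L_{T_{x_{rk}}}$ of the subtrees rooted at the direct successors of $x_r$, where $k=\delta^{+}(x_r)$. With this identification in hand, (ii) becomes a clean statement about out-degrees and leaf-counts in $T_X$.

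For (i)$\Rightarrow$(ii), let $r\in\Sp{X}\setminus\{0\}$. Since $T_X$ is strictly $n$-ary, $\delta^{+}(x_r)=n$, so the parts of $G'_{r,X}$ are the $n$ sets $L_{T_{x_{r1}}},\ldots,L_{T_{x_{rn}}}$. Perfectness says all leaves of $T_X$ lie at one common level, so each $T_{x_{ri}}$ is itself perfect strictly $n$-ary of the same depth, and in particular has the same number of leaves; this gives (ii).

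For the converse, I would first conclude from (ii) that $T_X$ is strictly $n$-ary: for every nonzero $r$, the $n$ parts of $G'_{r,X}$ must match $\delta^{+}(x_r)$ via Lemma~\ref{c25} and the injectivity hypothesis, so $\delta^{+}(x_r)=n$ for every internal node. The more substantive step is perfectness, which I would prove by induction on $|L_{T_v}|$, showing that every internal subtree $T_v$ is perfect strictly $n$-ary. The crucial dichotomy at the inductive step comes from applying (ii) to $r=l_X(v)$: the equality $|L_{T_{v_1}}|=\cdots=|L_{T_{v_n}}|$ among the $n$ children $v_1,\ldots,v_n$ of $v$ forces either that every $v_i$ is a leaf (so $T_v$ has depth~$1$) or that every $v_i$ is internal. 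In the second case each $T_{v_i}$ is perfect strictly $n$-ary of some depth $d_i$ by induction, so $|L_{T_{v_i}}|=n^{d_i}$, and the common value of these sizes forces $d_1=\cdots=d_n$; hence $T_v$ is perfect of depth $d_i+1$.

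The main obstacle is exactly this last step --- extracting equal \emph{depth}, not merely equal leaf-count, from condition (ii). The dichotomy ``all children are leaves or all children are internal'' is precisely what the equal-size assumption buys; without it the induction would collapse, since strictly $n$-ary trees with a common leaf-count need not be isomorphic. Note also that without the injectivity hypothesis the value of $p$ in Lemma~\ref{c25} can exceed one, so $G'_{r,X}$ decomposes into several disjoint complete multipartite pieces and condition (ii) as phrased would conflate information from several internal nodes at once, destroying the clean correspondence between ``parts of $G'_{r,X}$'' and ``children of a single node $x_r$'' on which the whole argument rests.
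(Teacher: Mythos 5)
Your proof is correct and follows the same skeleton as the paper's: Lemma~\ref{c25} plus the injectivity of the internal labeling gives $p=1$, which yields (i)$\Rightarrow$(ii) immediately, and the converse is proved by induction over the tree. The one place where you genuinely diverge is the step you yourself flag as the main obstacle. The paper's induction is on the number of levels: it passes to the subspaces $X_i=L_{T_{x_i}}$, notes that condition (ii) is inherited by them, and then simply asserts that the subtrees $T_{x_1},\dots,T_{x_n}$ all have $k$ levels before gluing them back together --- i.e.\ it does not explicitly justify why the perfect subtrees must have \emph{equal} depth. Your argument supplies exactly that justification: the dichotomy ``all children are leaves or all are internal'' extracted from the equal part sizes, followed by the leaf-count formula $|L_{T_{v_i}}|=n^{d_i}$ forcing $d_1=\cdots=d_n$. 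This makes your version of the inductive step more complete than the published one, at the cost of nothing; your closing remark about why injectivity of the labeling is indispensable (keeping $p=1$ in Lemma~\ref{c25}) is also accurate and matches the role that hypothesis plays in the paper.
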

\begin{proof}
(i)$\Rightarrow$(ii) Let $T_X$ be a perfect strictly $n$-ary tree such that all labels of $T_X$ are  different.  According to Lemma~\ref{c25} we have $G'_{r,X}=G'_{r,X}[L_{T_{x_1}},....L_{T_{x_n}}]$ for every $r\in \Sp{X}\setminus \{0\}$, where $x_1,...,x_n$ are the direct successors of the node $x$ labeled by $r$. Since $T_X$ is perfect it is easy to see that the trees  $T_{x_1},...,T_{x_n}$ are isomorphic as rooted trees. Hence $|L_{T_{x_1}}|=\cdots=|L_{T_{x_n}}|$ which is equivalent to condition (ii).

(ii)$\Rightarrow$(i) Let us prove this implication by induction on the number of levels of the tree $T_X$. Let the number of levels of $T_X$ be equal to 1. It is evident that in this case implication (ii)$\Rightarrow$(i) holds.

Suppose that the implication (ii)$\Rightarrow$(i) holds in the case when the number of levels of $T_X$ is equal to $k$ and suppose that the number of levels of $T_X$ is equal to $k+1$ and condition (ii) holds. Let $x$ be a root of $T_X$ and $x_1,...,x_n$ be its direct successors.  Consider the subtrees $T_{x_1},...,T_{x_n}$ with the roots $x_1,...,x_n$ and let $X_1=L_{T_{x_1}},...,X_n=L_{T_{x_n}}$. Since all the labels of $T_X$ are different and condition (ii) holds for every $r\in \Sp{X}\setminus\{0\}$ it follows that condition (ii) also holds for the subspaces $(X_1,d),...,(X_n,d)$. According to the supposition of induction all the trees $T_{X_1},...,T_{X_n}$ are perfect strictly $n$-ary since the number of levels of the trees $T_{x_1},...,T_{x_n}$ is equal to $k$. Taking into consideration the construction of $T_X$ from the trees $T_{x_1},...,T_{x_n}$ it easy to see that $T_X$ is also a perfect strictly $n$-ary tree.
\end{proof}

Omitting the condition ``different internal nodes of $T_X$ are different'' we can generalize Theorem~\ref{t4} to the following.
\begin{theorem}\label{t5}
Let $(X,d)$ be a finite ultrametric space with $|X|\geqslant 2$. The following conditions are equivalent.
\begin{itemize}
\item [\textup{(i)}] $T_X$ is a perfect strictly $n$-ary tree.
\item [\textup{(ii)}] $G_{r,X}'$ is a union of a finite number of complete multipartite graphs having the same number of vertices in each part for every nonzero $r\in \Sp{X}$.
\end{itemize}
\end{theorem}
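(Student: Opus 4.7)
The plan is to mirror the proof of Proposition~\ref{t4}, but to exploit the full strength of Lemma~\ref{c25}, which now permits several components corresponding to several distinct nodes of $T_X$ sharing the same label once the injectivity assumption on internal labels is dropped.

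For the implication $(\mathrm{i}) \Rightarrow (\mathrm{ii})$, fix a nonzero $r \in \Sp{X}$ and let $x_1, \ldots, x_p$ be the distinct nodes of $T_X$ labeled by $r$. Lemma~\ref{c25} gives
\[
G'_{r,X} = G^1_r \sqcup \cdots \sqcup G^p_r, \qquad G^i_r = G^i_r[L_{T_{x_{i1}}}, \ldots, L_{T_{x_{ik_i}}}],
\]
where $x_{i1}, \ldots, x_{ik_i}$ are the direct successors of $x_i$ and $k_i = \delta^+(x_i)$. Because $T_X$ is strictly $n$-ary, $k_i = n$ for every $i$; because $T_X$ is perfect, each $T_{x_i}$ is itself a perfect strictly $n$-ary tree, so the subtrees at the children of $x_i$ share a common leaf count. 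Consequently every $G^i_r$ is complete $n$-partite with parts of equal cardinality, which is~(ii).

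For the reverse implication $(\mathrm{ii}) \Rightarrow (\mathrm{i})$, I would induct on the number of levels of $T_X$. The base case is immediate: for a one-level tree, condition (ii) at $r = \diam X$ (where the root is the unique node labeled by $r$) forces the single component of $G'_{r, X}$ to be complete $n$-partite with singleton parts, so $T_X$ is already perfect strictly $n$-ary. For the inductive step, assume the claim for trees with at most $k$ levels and let $T_X$ have $k + 1$ levels. Applying (ii) at $r = \diam X$ together with Lemma~\ref{c25} yields that the root has out-degree $n$ and that its children $x_1, \ldots, x_n$ produce subtrees whose leaf-sets $X_i := L_{T_{x_i}}$ all have the same cardinality.

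The key step is then to verify that each subspace $(X_i, d)$ inherits condition (ii) with the same value of $n$. For a nonzero $s \in \Sp{X_i}$, the components of $G'_{s, X_i}$ are exactly those components $G^y_s$ of $G'_{s, X}$ coming from nodes $y$ of $T_X$ labeled $s$ that lie inside $T_{x_i}$: for such $y$ one has $L_{T_y} \subseteq X_i$, while any node $y$ labeled $s$ outside $T_{x_i}$ is contained in some $T_{x_j}$ with $j \neq i$ and hence satisfies $L_{T_y} \cap X_i = \varnothing$. Since each such component is, by hypothesis on $X$, complete $n$-partite with equal-sized parts, condition (ii) holds for $X_i$. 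Applying the induction hypothesis to each $T_{x_i}$ and observing that the common leaf count $|X_i|$ forces the perfect strictly $n$-ary trees $T_{x_i}$ to have a common height, one concludes that $T_X$ itself is perfect strictly $n$-ary. The most delicate point in the argument is precisely this transfer of condition (ii) from $X$ to the subspaces with the same integer $n$, and this is exactly what the disjoint-union decomposition supplied by Lemma~\ref{c25} delivers.
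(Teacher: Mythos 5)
Your proof is correct, but for the harder implication (ii)\,$\Rightarrow$\,(i) it takes a genuinely different route from the paper. The paper does not rerun the induction: it replaces $l_X$ by an auxiliary labeling $\tilde l_X$ that is injective on internal nodes and still strictly decreases along root--leaf paths, notes via Lemma~\ref{c25} that the resulting space $\tilde X$ satisfies condition (ii) of Proposition~\ref{t4} (each nonzero value of $\tilde l_X$ now picks out exactly one component of your disjoint union), invokes Proposition~\ref{t4} as a black box, and transfers the conclusion back to $T_X$ because the underlying unlabeled tree is unchanged. You instead redo the induction of Proposition~\ref{t4} directly in the non-injective setting, which forces you to make explicit the localization step: the components of $G'_{s,X_i}$ are precisely the components of $G'_{s,X}$ indexed by the nodes of $T_{x_i}$ labeled $s$, using that the representing tree of the ball $X_i=L_{T_{x_i}}$ is $T_{x_i}$ (Proposition~\ref{lbpm}). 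That step is exactly right and is the honest content hidden behind the paper's ``it is easy to see''; the trade-off is that the relabeling trick is shorter and reuses Proposition~\ref{t4} wholesale, while your version is self-contained. Two small points to record in a final write-up. First, at the root level the equalities $|X_1|=\cdots=|X_n|$ also rule out the degenerate case where some $x_i$ is a leaf while another is internal, which is needed so that the induction hypothesis (stated for spaces with at least two points) applies to every $X_i$. Second, both your proof and the paper's tacitly read condition (ii) as requiring every component to be complete $n$-partite for one and the same $n$; under the weaker literal reading (equal parts within each component, arbitrary numbers of parts) the implication fails, e.g.\ for the tree whose root has one child with four leaf children and a second child with two internal children each having two leaf children. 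Your inheritance step already uses the stronger reading, so your argument is consistent, but the hypothesis should be stated that way.
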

\begin{proof}
Implication (i)$\Rightarrow$(ii) easily follows form Lemma~\ref{c25}.

(ii)$\Rightarrow$(i) Let condition (ii) hold, $(T_X,l_X)$ be a representing tree of the space $(X,d)$ and let $(T_X,\tilde{l}_X)$ be the same tree with another labeling function $\tilde{l}_X$ having the following properties: labels of different internal nodes are different and labels monotonically decrease along all paths from the root of $T_X$ to any leaf. Taking into consideration Lemma~\ref{c25} it is easy to see that condition (ii) of Proposition~\ref{t4} holds for the new ultrametric space $\tilde{X}$ with the representing tree $(T_X,\tilde{l}_X)$. Since the structure of $T_X$ was not changed while changing the labeling function by Proposition~\ref{t4} we have that $T_X$ is a perfect strictly $n$-ary tree.
\end{proof}

\section{Ultrametric spaces generated by unrooted labeled trees}\label{sec4}
Throughout this section by \(T = T(l)\) we denote an unrooted labeled tree with the labeling function $l\colon V(T)\to \RR^+$.
In the paper~\cite{Do20} it was defined a mapping \(d_l \colon V(T) \times V(T) \to \RR^{+}\) as
\begin{equation}\label{e11.3}
d_l(u, v) = \begin{cases}
0 & \text{if } u = v,\\
\max\limits_{v^{*} \in V(P)} l(v^{*}) & \text{if } u \neq v,
\end{cases}
\end{equation}
where \(P\) is a path joining \(u\) and \(v\) in \(T(l)\). Also the following proposition was shown there.

\begin{proposition}[\cite{Do20}]\label{p11.9}
The following statements are equivalent for every labeled tree \(T = T(l)\).
\begin{itemize}
  \item [\textup{(i)}] The function $d_l$ is an ultrametric on $V(T)$.
  \item [\textup{(ii)}] The inequality
\begin{equation}\label{t11e1}
\max\{l(u_1), l(v_1)\} > 0
\end{equation}
holds for every \(\{u_1, v_1\} \in E(T)\).
\end{itemize}
 \end{proposition}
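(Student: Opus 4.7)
The plan is to verify the two directions of the biconditional by direct checks against the ultrametric axioms, using the unique-path property of trees.

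For the implication (i)$\Rightarrow$(ii), I would argue contrapositively by direct inspection. Pick any edge $\{u_1,v_1\} \in E(T)$. Since $u_1 \neq v_1$, the axiom (ii) of an ultrametric forces $d_l(u_1,v_1) > 0$. But the unique path $P$ joining $u_1$ and $v_1$ in $T$ is the single edge itself, so $V(P) = \{u_1,v_1\}$ and therefore $d_l(u_1,v_1) = \max\{l(u_1),l(v_1)\}$. This gives \eqref{t11e1} at once.

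For (ii)$\Rightarrow$(i), I would verify the three axioms of an ultrametric in turn. Symmetry is immediate because the path from $u$ to $v$ in a tree coincides with the path from $v$ to $u$. For the definiteness axiom, the equality $d_l(u,u) = 0$ holds by definition; conversely, if $u \neq v$, then the unique path $P$ from $u$ to $v$ contains at least one edge $\{u_1,v_1\}$, whence
\[
d_l(u,v) = \max_{v^{*} \in V(P)} l(v^{*}) \geqslant \max\{l(u_1),l(v_1)\} > 0
\]
by hypothesis~\eqref{t11e1}.

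The main step, and the one requiring most care, is the strong triangle inequality. Given three points $x,y,z \in V(T)$, let $P_{xy}$, $P_{xz}$, $P_{zy}$ denote the (unique) paths between the corresponding pairs in $T$. The key structural fact I would invoke is that in a tree the vertex set of $P_{xy}$ is contained in $V(P_{xz}) \cup V(P_{zy})$; this follows because the concatenation of $P_{xz}$ with $P_{zy}$ yields a walk from $x$ to $y$ that must contain the unique $x$--$y$ path. Consequently,
\[
d_l(x,y) = \max_{v^{*} \in V(P_{xy})} l(v^{*}) \leqslant \max\bigl\{\max_{v^{*} \in V(P_{xz})} l(v^{*}),\ \max_{v^{*} \in V(P_{zy})} l(v^{*})\bigr\} = \max\{d_l(x,z),d_l(z,y)\},
\]
with the degenerate cases $x=y$, $x=z$, or $y=z$ being trivial. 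The principal obstacle is precisely this containment-of-paths observation; once it is secured, the ultrametric inequality reduces to monotonicity of $\max$.
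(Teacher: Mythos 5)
Your proof is correct. Note, however, that the paper does not prove Proposition~\ref{p11.9} at all: it is imported verbatim from~\cite{Do20} and stated without argument, so there is no in-paper proof to compare against. Your argument is the natural one for this statement: the forward direction follows by evaluating $d_l$ on a single edge, and the reverse direction reduces the strong triangle inequality to the containment $V(P_{xy})\subseteq V(P_{xz})\cup V(P_{zy})$, which you correctly justify by extracting the unique $x$--$y$ path from the concatenated walk; the hypothesis~\eqref{t11e1} is used exactly where it is needed, namely to guarantee positivity of $d_l(u,v)$ for $u\neq v$.
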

It is possible to show that in the case when condition~(\ref{t11e1}) does not hold the space $X$ is pseudoultrametric, see~\cite{DK21} for the details.

In this section we show that not every finite ultrametric space $(X,d)$ can be represented by a labeled tree $T(l)$.  Thus, labeled unrooted tress $T(l)$ with labeling functions $l$ satisfying~(\ref{t11e1}) generate a new class of finite ultrametric spaces.

We need the following lemma for the proof of the main result of this section.
\begin{lemma}\label{l1.3}
Let $X$, $|X|\geqslant 2$, be a finite ultrametric space and let $u$ be an inner node of $T_X$. Then $u$ has at least one direct successor which is a leaf if and only if in the ball $B=L_{T_u}$ there exists a point $z$ such that $d(z,t)=\diam B$ for all $t \in B\setminus \{z\}$.
\end{lemma}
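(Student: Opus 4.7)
The plan is to translate both sides of the biconditional into statements about the diametrical graph $G^d_B$ of the ball $B=L_{T_u}$, where, by the inductive construction of $T_X$, the direct successors of $u$ correspond exactly to the parts of the complete multipartite decomposition of $G^d_B$ given by Theorem~\ref{t13}.

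First, I would record the following key observations that follow from the construction of $T_X$ (combined with Proposition~\ref{lbpm}): since $u$ is an inner node of $T_X$, the set $B=L_{T_u}$ is a ball with $|B|\geqslant 2$ and $\operatorname{diam} B = l_X(u)$; moreover, if $x_1,\ldots,x_k$ are the direct successors of $u$ in $T_X$ and $B_i=L_{T_{x_i}}$, then $G^d_B = G^d_B[B_1,\ldots,B_k]$ is the complete multipartite decomposition provided by Theorem~\ref{t13}. In particular, $x_i$ is a leaf of $T_X$ if and only if $|B_i|=1$.

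For the forward implication, I would assume some $x_i$ is a leaf, so that $B_i=\{z\}$ for some $z\in B$. Since $G^d_B$ is complete multipartite on parts $B_1,\ldots,B_k$ and $z$ is the only vertex in its part, $z$ is adjacent in $G^d_B$ to every other vertex, which by the definition of $G^d_B$ gives $d(z,t)=\operatorname{diam} B$ for every $t\in B\setminus\{z\}$.

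For the converse, assume there exists $z\in B$ with $d(z,t)=\operatorname{diam} B$ for all $t\in B\setminus\{z\}$. Then $z$ is adjacent in $G^d_B$ to every other vertex, and since two vertices in the same part $B_i$ are non-adjacent, $z$ must form a part by itself; hence the corresponding direct successor $x_i$ of $u$ satisfies $L_{T_{x_i}}=\{z\}$ and is therefore a leaf. The only subtle point worth a sentence is the justification that the decomposition $G^d_B=G^d_B[B_1,\ldots,B_k]$ really does correspond to the direct successors of $u$ — this is exactly the recursive step in the construction of the representing tree, applied to the ball $B$ at the node $u$ rather than to the root, and it may deserve an explicit reference to~\eqref{e2.7} and to Proposition~\ref{lbpm}. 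No further obstacles are expected.
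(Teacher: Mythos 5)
Your proof is correct. The one point that needs care --- that the direct successors $x_1,\ldots,x_k$ of $u$ are exactly the parts of the complete multipartite decomposition $G^d_B=G^d_B[B_1,\ldots,B_k]$ of the diametrical graph of $B=L_{T_u}$, and that $x_i$ is a leaf precisely when $|B_i|=1$ --- is genuinely the recursive step of the construction of $T_X$ together with Proposition~\ref{lbpm}, and you flag it and justify it. Given that, both directions are immediate: a singleton part is adjacent to every other vertex of $G^d_B$, and a vertex adjacent to every other vertex must lie in a singleton part.

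Your route differs slightly from the paper's, which is very terse: there, necessity is derived from Theorem~\ref{t2.9} (the distance between two points equals the maximum label on the path joining the corresponding leaves, so a leaf hanging directly below $u$ is at distance $l_X(u)=\diam B$ from every other point of $B$), and sufficiency is dismissed as ``easily shown by contradiction.'' You instead work entirely inside the diametrical graph $G^d_B$ via Theorem~\ref{t13}, which has the advantage of making the statement a single clean equivalence (leaf child of $u$ $\Leftrightarrow$ singleton part of $G^d_B$ $\Leftrightarrow$ vertex adjacent to all others) and of supplying the sufficiency argument explicitly rather than by an unspecified contradiction. The two approaches rest on the same underlying fact --- the definition of the children of $u$ as the parts of $G^d_{L_{T_u}}$ --- so the difference is one of packaging; yours is the more self-contained and symmetric write-up.
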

\begin{proof}
The necessity follows directly from Theorem~\ref{t2.9} and Proposition~\ref{lbpm}. As the point $z$ one can chose any leaf of the tree $T_X$ which is a direct successor of the inner node $u$. The sufficiency can be easily shown by contradiction.
\end{proof}

\begin{theorem}\label{t1.4}
Let \(T = T(l)\), $|V(T)|\geqslant 2$, be a labeled tree such that \(X=(V(T), d_l)\) is an ultrametric space. Then in the representing tree $T_X$ every inner node has at least one direct successor which is a leaf.

Conversely, let $X$, $|X|\geqslant 2$, be an ultrametric space such that in the representing tree $T_X$ every inner node has at least one direct successor which is a leaf. Then $X$ can be represented by a labeled unrooted tree \(T = T(l)\).
\end{theorem}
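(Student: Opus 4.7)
The plan is to prove the two implications separately, using Lemma~\ref{l1.3} as the bridge between the tree-structural condition on $T_X$ and a metric condition inside a ball.

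For the direct implication, I fix an inner node $u$ of $T_X$ and set $B = L_{T_u}$, $r = \diam B = l_X(u)$. The key geometric step is to show that the smallest subtree $S$ of the unrooted tree $T$ whose vertex set contains $B$ satisfies $V(S) = B$. Every leaf of $S$ lies in $B$ by minimality of $S$, and every internal vertex $v$ of $S$ separates $S$ into components each containing points of $B$, so some path in $T$ between points of $B$ passes through $v$ and forces $l(v) \leq r$; every leaf $v$ of $S$ (which lies in $B$) also satisfies $l(v) \leq d_l(v, c) \leq r$ for any $c \in B$. Hence all labels on $V(S)$ are at most $r$. For any putative $v \in V(S) \setminus B$, the path from a fixed $c \in B$ to $v$ lies inside $S$, so $d_l(c, v) \leq r$, forcing $v \in B$, a contradiction. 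Picking $x, y \in B$ with $d_l(x, y) = r$ yields a vertex $v^* \in V(S) = B$ on the path from $x$ to $y$ with $l(v^*) = r$. For every $t \in B \setminus \{v^*\}$ the path from $v^*$ to $t$ contains $v^*$, so $d_l(v^*, t) \geq l(v^*) = r$, while $d_l(v^*, t) \leq r$ since both lie in $B$. Lemma~\ref{l1.3} then yields the conclusion.

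For the converse, I build $T(l)$ from $T_X$. By hypothesis, each inner node $u$ has a direct-successor leaf, so I choose one such leaf $\{z_u\}$ per inner $u$, set $l(z_u) := l_X(u) > 0$, and $l(x) := 0$ for every remaining $x \in X$. I take $V(T) := X$ and, for each inner node $u$ with children $c_1, \ldots, c_k$ in $T_X$, I add the edge $\{z_u, y\}$ whenever $c_i = \{y\}$ is a leaf with $y \neq z_u$, and the edge $\{z_u, z_{c_i}\}$ whenever $c_i$ is inner. Summation over inner $u$ yields $\sum_u (\delta^{+}(u) - 1) = |X| - 1$ edges, and a bottom-up induction on $T_u$ shows that the edges added inside $T_u$ form a connected graph on $B_u = L_{T_u}$, so $T$ is a tree on $X$. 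Every edge is incident to some $z_u$ with $l(z_u) > 0$, so condition~(\ref{t11e1}) of Proposition~\ref{p11.9} holds and $d_l$ is a genuine ultrametric on $V(T)$.

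To verify $d_l = d$, given $x, y \in X$ let $u$ be the least common ancestor of $\{x\}$ and $\{y\}$ in $T_X$ and let $u_x, u_y$ be the distinct children of $u$ whose subtrees contain $\{x\}$ and $\{y\}$. The unique path from $x$ to $y$ in $T$ rises through the recursive $z$-structure inside $B_{u_x}$ up to $z_{u_x}$ (reducing to $x$ itself when $u_x = \{x\}$), uses the edge to $z_u$, then the edge to $z_{u_y}$, and descends symmetrically to $y$. Every vertex strictly interior to this path is a $z$-vertex of a proper descendant of $u$ in $T_X$ and hence carries a label $\leq l_X(u)$, while $z_u$ itself carries label exactly $l_X(u)$. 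Therefore $d_l(x, y) = l_X(u) = d(x, y)$ by Theorem~\ref{t2.9}. The main obstacle is the bookkeeping in the converse: checking that the prescribed edge set really produces a tree (rather than a multigraph or a forest) and that the path in $T$ between any two vertices passes through precisely the $z$-vertex at the level of their least common ancestor in $T_X$; the identification $V(S) = B$ is the conceptual pinch point of the forward direction.
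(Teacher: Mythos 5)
Your argument is correct, but the two halves relate to the paper's proof differently. The forward direction is in essence the paper's argument: both proofs locate, inside the ball $B=L_{T_u}$ with $\diam B = r$, a vertex carrying the label $r$, observe that this vertex is then at distance exactly $r$ from every other point of $B$ (the lower bound because an endpoint of a path contributes its own label to the maximum in~(\ref{e11.3}), the upper bound because both points lie in $B$), and invoke Lemma~\ref{l1.3}. Your detour through the minimal subtree $S$ spanning $B$ and the identity $V(S)=B$ is more than is needed --- the paper simply notes that the maximal-label vertex $z$ on the path realizing the diameter satisfies $d_l(x,z)\leqslant r$ and hence already lies in $B$ --- but it is sound and in fact isolates a reusable structural fact about how balls of $(V(T),d_l)$ sit inside $T$. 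The converse is where you genuinely diverge: the paper strings \emph{all} leaf-children of each inner node $u$ of $T_X$ into a path in $T(l)$, labels every vertex of that path by $l_X(u)$, and hangs it off the last vertex of the parent's path, so that every vertex of $T(l)$ receives a positive label; you instead elect a single representative leaf-child $z_u$ per inner node, give it the label $l_X(u)$, set all remaining labels to zero, and use a hub-and-spoke wiring. Your version makes the verification of $d_l=d$ via Theorem~\ref{t2.9} arguably cleaner (exactly one vertex on the connecting path carries the label $l_X(u)$ of the least common ancestor, and all interior labels are controlled by the strict decrease of labels in $T_X$), at the cost of having to check condition~(\ref{t11e1}) of Proposition~\ref{p11.9} explicitly --- which you do, since every edge meets some $z_u$ --- and of the edge-count/connectivity bookkeeping, which you carry out correctly ($\sum_u(\delta^+(u)-1)=|X|-1$ together with the bottom-up connectivity of each $L_{T_u}$). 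Either construction establishes the theorem.
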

\begin{proof}
Let $B=B(x,r)$ be any ball from $X$ with the center $x$ and the diameter $r$. Since $X$ is finite, without loss of generality, we consider that diameter is attained, i.e., there exists $y\in B$  such that $d_l(x,y)=r$.
According to~(\ref{e11.3}) there exists a vertex $z\in V(P)$ such that $l(z)=r$ (possibly $z=x$ or $z=y$), where $P$ is the unique path connecting $x$ and $y$ in $T(l)$. Since $d_l(x,z)=r$ we have $z\in B$.

Let us describe the set $B$ on the tree $T(l)$. According to the definition of $d_l$ we have $t\in B$, $t\neq x$,  if
$
\max\limits_{v^*\in V(P)}l(v^*)\leqslant r
$
where $P$ is a path joining $x$ and $t$ in $T(l)$.
Hence, using~(\ref{e11.3}) and the fact that $r=l(z)$ is the maximal label among all labels of vertices of $T$ belonging to $B$, we see that $d_l(t,z)=r$ for all $t\in B\setminus \{z\}$. Thus, it follows from Lemma~\ref{l1.3} and Proposition~\ref{lbpm} that every inner note of $T_X$ has at least one direct successor which is a leaf.

Conversely, let us now describe a procedure of construction of the labeled tree $T(l)$ from the representing tree $(T_X,l_X)$ in which every inner node satisfies the above mentioned property. Let the root $r$ of $T_X$ have the label $l_X(r)$ and let $r_1,...,r_n$ be direct successors of $r$ which are leaves, $n\geqslant 1$,  and  $s_1,...,s_k$ be direct successors of $r$ which are inner nodes, $k\geqslant 1$, with the respective labels $l_X(s_1),...,l_X(s_k)$. Set $V(T(l)):=\{r_1,...,r_n\}$,
$E(T(l)):=\{  \{r_1,r_2\},...,\{r_{n-1},r_n\} \}$,
$l(r_1)=\cdots = l(r_n):=l_X(r)$.

Further, let $t_{11},...,t_{1n_1}$ be the direct successors of $s_1$ which are leaves, $n_1\geqslant 1$,  and  $u_{11},...,u_{1p_1}$ be direct successors of $s_1$ which are inner nodes, $k_1\geqslant 0$,\ldots,
$t_{k1},...,t_{kn_k}$ be the direct successors of $s_k$ which are leaves, $n_k\geqslant 1$,  and  $u_{k1},...,u_{1p_k}$ be the direct successors of $s_k$ which are inner nodes, $p_k\geqslant 0$.

Set $$V(T(l)):=V(T(l))
\cup\{t_{11},...,t_{1n_{1}}\}
\cup...
\cup\{t_{k1},...,t_{kn_{k}}\},$$

$$E(T(l)):=E(T(l))\cup\{ \{r_n, t_{11}\},\{t_{11},t_{12}\},...,\{t_{1n_1-1},t_{1n_1}\}\} \},$$
$$
...
$$
$$E(T(l)):=E(T(l))\cup\{ \{r_n, t_{k1}\},\{t_{k1},t_{k2}\},...,\{t_{kn_k-1},t_{kn_k}\}\} \},$$
and
$l(t_{11})=...=l(t_{1n_{1}}):=l_X(s_1)$,
...
$l(t_{k1})=...=l(t_{kn_{k}}):=l_X(s_k)$.
Thus, the degree of $r_n$ is equal to $1+k$. After that we look at the successors of the inner nodes $u_{11},...,u_{1{p_1}}$,...,$u_{k1},...,u_{1p_k}$ and repeat the same procedure. Examples of a representing tree $(T_X,l_X)$ and the corresponding tree $T(l)$ are depicted in Figure~\ref{fig11} and in Figure~\ref{fig13}, respectively. In order to finish the proof it suffices only to note that the ultrametric $d_l$ defined by~(\ref{e11.3}) indeed coincides with the ultrametric $d$.
\end{proof}

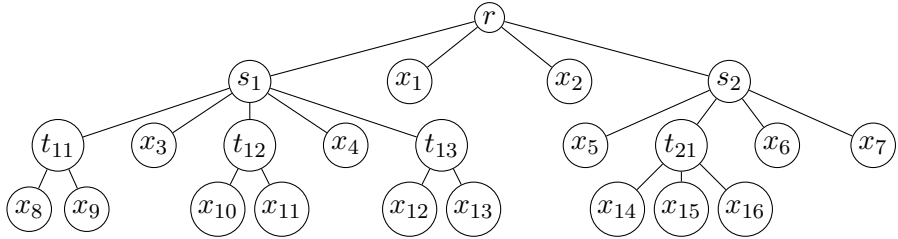
\begin{figure}[ht]
\begin{center}
\begin{tikzpicture}[
node distance=1cm, on grid=true,
level 1/.style={level distance=1cm,sibling distance=2.5cm},
level 2/.style={level distance=1cm,sibling distance=1.5cm},
level 3/.style={level distance=1cm,sibling distance=.9cm},
solid node/.style={circle,draw,inner sep=1.5,fill=black},
hollow node/.style={circle,draw,inner sep=1.5},
scale = 0.85
]

\node [hollow node, label={[label distance=20pt]
left:{}}] 
(A2) at (3,0) {\(r\)}
	child {node [hollow node]{\(s_1\)}
		child {node [hollow node]{\(t_{11}\)}
			child {node [hollow node]{\(x_{8}\)}}
			child {node [hollow node]{\(x_{9}\)}}
		}
    	child {node [hollow node]{\(x_3\)}}
		child  {node [hollow node]{\(t_{12}\)}
			child [sibling distance=1cm] {node [hollow node]{\(x_{10}\)}}
			child [sibling distance=1cm] {node [hollow node]{\(x_{11}\)}}
		}
	    child {node [hollow node]{\(x_4\)}}
		child  {node [hollow node]{\(t_{13}\)}
			child [sibling distance=1cm] {node [hollow node]{\(x_{12}\)}}
			child [sibling distance=1cm] {node [hollow node]{\(x_{13}\)}}
		}
	}
	child {node [hollow node]{\(x_1\)}}
	child {node [hollow node]{\(x_2\)}}
	child {node [hollow node]{\(s_2\)}
    	child {node [hollow node]{\(x_5\)}}
		child {node [hollow node]{\(t_{21}\)}
			child [sibling distance=1cm] {node [hollow node]{\(x_{14}\)}}
			child [sibling distance=1cm] {node [hollow node]{\(x_{15}\)}}
			child [sibling distance=1cm] {node [hollow node]{\(x_{16}\)}}
		}
        child {node [hollow node]{\(x_6\)}}
    	child {node [hollow node]{\(x_7\)}}
	};
\end{tikzpicture}
\caption{The canonical representing tree $(T_X,l_X)$ of a finite ultrametric space $(X,d)$ with $X=\{x_1,...,x_{16}\}$.}
\label{fig11}
\end{center}

\end{figure}

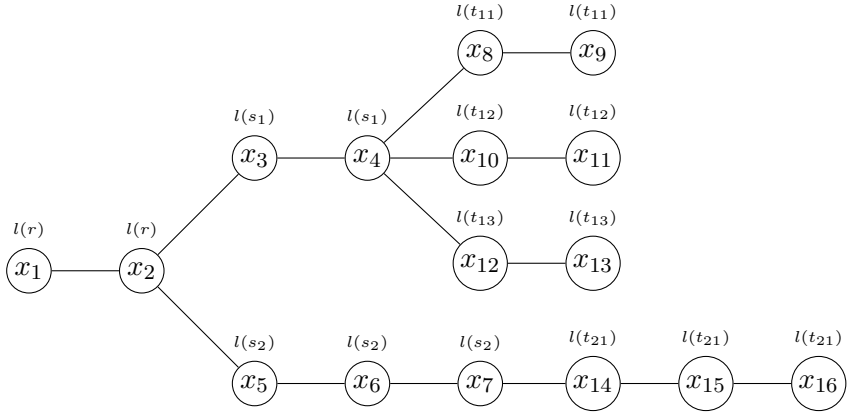
\begin{figure}[ht]
\begin{center}
\begin{tikzpicture}[
node distance=1.5cm, on grid,
hollow node/.style={circle,draw,inner sep=1.5}
]
\tikzset{solid/.style={circle,draw,inner sep=1.5,fill=black}}

\node [hollow node, label=above:\tiny{$l(r)$}] (x1) {$x_1$};
\node [hollow node, label=above:\tiny{$l(r)$}] (x2) [right=of x1] {$x_2$};

\node [hollow node, label=above:\tiny{$l(s_1)$}] (x3) [right=of x2, yshift=1.5cm] {$x_3$};
\node [hollow node, label=above:\tiny{$l(s_1)$}] (x4) [right=of x3] {$x_4$};

\node [hollow node, label=above:\tiny{$l(t_{11})$}] (x8) [right=of x4, yshift=1.4cm] {$x_8$};
\node [hollow node, label=above:\tiny{$l(t_{11})$}] (x9) [right=of x8] {$x_{9}$};
\node [hollow node, label=above:\tiny{$l(t_{12})$}] (x10) [right=of x4] {$x_{10}$};
\node [hollow node, label=above:\tiny{$l(t_{12})$}] (x11) [right=of x10] {$x_{11}$};
\node [hollow node, label=above:\tiny{$l(t_{13})$}] (x12) [right=of x4, yshift=-1.4cm] {$x_{12}$};
\node [hollow node, label=above:\tiny{$l(t_{13})$}] (x13) [right=of x12] {$x_{13}$};

\draw (x4)--(x8);
\draw (x4)--(x10);
\draw (x4)--(x12);
\draw (x8)--(x9);
\draw (x10)--(x11);
\draw (x12)--(x13);


\node [hollow node, label=above:\tiny{$l(s_2)$}] (x5) [right=of x2, yshift=-1.5cm] {$x_5$};
\node [hollow node, label=above:\tiny{$l(s_2)$}] (x6) [right=of x5] {$x_6$};
\node [hollow node, label=above:\tiny{$l(s_2)$}] (x7) [right=of x6] {$x_7$};

\node [hollow node, label=above:\tiny{$l(t_{21})$}] (x14) [right=of x7] {$x_{14}$};
\node [hollow node, label=above:\tiny{$l(t_{21})$}] (x15) [right=of x14] {$x_{15}$};
\node [hollow node, label=above:\tiny{$l(t_{21})$}] (x16) [right=of x15] {$x_{16}$};

\draw (x1)--(x2)--(x3)--(x4);
\draw (x2)--(x5)--(x6)--(x7);
\draw (x7)--(x14)--(x15)--(x16);

\end{tikzpicture}
\end{center}
\caption{The labeled tree $T(l)$ of the space $(X,d)$.}
\label{fig13}
\end{figure}

\section{Some classes of finite ultrametric spaces}\label{s5}
In this section we give a detailed survey of some special classes of finite ultrametric spaces, which were considered in the last ten years.
The structure of representing trees will be mainly used in studying of hereditary properties of spaces from these classes.
The images of representing trees of the spaces discussed in this section can be found in the corresponding references.

\noindent\textbf{2.1. Spaces extremal for the Gomory-Hu inequality.}
In 1961 E.\,C.~Gomory and T.\,C.~Hu \cite{GomoryHu(1961)} for arbitrary finite ultrametric space $X$ proved the inequality \mbox{$|\operatorname{Sp}(X)| \leqslant |X|$}. Define by $\mfu$ the class of finite ultrametric spaces $X$ such that $\abs{\Sp{X}} = \abs{X}$. In~\cite{PD(UMB)} two descriptions of $X \in \mfu$ were obtained in terms of graphs $G'_{r,X}$ (see Definitions~\ref{d14}, ~\ref{d15}) and in terms of representing trees (see Theorem~\ref{t22} below).
In~\cite{DP19} it was proved that  \(X\in \mfu\) if and only if there are no equilateral triangles in \(X\) and the graph \(G'_{r, X}\) is connected for every nonzero \(r \in \Sp{X}\). Another one criterium of $X \in \mfu$ in terms of weighted Hamiltonian cycles and weighted Hamiltonian paths was proved in~\cite{DPT}.

\begin{theorem}[\!\cite{PD(UMB)}]\label{t22}
Let $(X, d)$ be a finite ultrametric space with $|X| \geqslant 2$. The following condidtions are equivalent.
\begin{enumerate}
  \item [\textup{(i)}] $(X, d) \in \mathfrak U$.
  \item [\textup{(ii)}] $G'_{r,X}$ is complete bipartite for every  nonzero $r\in \operatorname{Sp}(X)$.
  \item [\textup{(iii)}] $T_X$ is strictly binary and the labels of different internal nodes are different.
\end{enumerate}
\end{theorem}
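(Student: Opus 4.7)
The plan is to prove the theorem by establishing the cycle (i) $\Leftrightarrow$ (iii) $\Leftrightarrow$ (ii), which turns out to be the cleanest route because (iii) gives direct structural information about $T_X$ that controls both the cardinality $|\Sp{X}|$ and the shape of the graphs $G'_{r,X}$.

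For (i) $\Leftrightarrow$ (iii) the idea is a counting argument. In any representing tree $T_X$ every internal node has out-degree at least $2$ (by construction of $T_X$ from the decomposition of the diametrical graph), so if $T_X$ has $I$ internal nodes then $I \leqslant |X| - 1$, with equality if and only if $T_X$ is strictly binary. Moreover, the set of labels of internal nodes of $T_X$ is exactly $\Sp{X}\setminus\{0\}$, so the number $D$ of distinct positive elements of $\Sp{X}$ satisfies $D \leqslant I$, with equality if and only if all labels of different internal nodes are different. Chaining these gives $|\Sp{X}| - 1 = D \leqslant I \leqslant |X| - 1$, i.e.\ the Gomory--Hu inequality, with equality $|\Sp{X}| = |X|$ precisely when both intermediate inequalities are equalities, which is exactly condition (iii).

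For (iii) $\Leftrightarrow$ (ii) the tool is Lemma~\ref{c25}, which describes $G'_{r,X}$ as a disjoint union of $p$ complete multipartite graphs indexed by the internal nodes of $T_X$ carrying the label $r$. If (iii) holds, then for each nonzero $r\in\Sp{X}$ there is a unique node $x$ labeled by $r$ (distinct labels) and it has exactly two direct successors (strictly binary), so by~(\ref{eq25}) the graph $G'_{r,X}$ is a single complete bipartite graph $G'_{r,X}[L_{T_{x_1}},L_{T_{x_2}}]$, proving (ii). Conversely, assume (ii) holds. Since any complete multipartite graph with at least two parts is connected, the only way a disjoint union of such graphs can itself be complete bipartite is that $p = 1$ and the single multipartite graph has exactly two parts; hence for every nonzero $r$ there is a unique node of $T_X$ labeled $r$ with out-degree $2$. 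As this holds for every positive label, $T_X$ is strictly binary and all internal labels are distinct, which is (iii).

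I do not expect a serious obstacle here: the counting step for (i) $\Leftrightarrow$ (iii) is elementary once one recalls that the positive labels of $T_X$ enumerate $\Sp{X}\setminus\{0\}$ and that internal nodes of $T_X$ always have out-degree at least $2$, and the equivalence (ii) $\Leftrightarrow$ (iii) is an essentially immediate reading of Lemma~\ref{c25}. The one point that deserves care is the ``disjoint union of complete multipartite graphs is complete bipartite iff it is a single complete bipartite graph'' observation used in (ii) $\Rightarrow$ (iii); this needs the remark that each summand in Lemma~\ref{c25} is connected (as complete multipartite with at least two nonempty parts), which forces $p=1$, and then being bipartite rather than $k$-partite with $k\geqslant 3$ forces the out-degree to be exactly $2$.
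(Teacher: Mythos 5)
Your argument is correct, but note that the paper does not actually prove Theorem~\ref{t22}: it is quoted verbatim from~\cite{PD(UMB)} as a known result, so there is no in-paper proof to match against. Your route is a clean, self-contained one. The equivalence (i)$\Leftrightarrow$(iii) via the chain $|\Sp{X}|-1 = D \leqslant I \leqslant |X|-1$ is sound: the edge/leaf count $\sum_v \delta^+(v) = L+I-1$ together with $\delta^+(v)\geqslant 2$ gives $I\leqslant L-1$ with equality exactly in the strictly binary case, and the surjectivity of the labeling of internal nodes onto $\Sp{X}\setminus\{0\}$ (which follows from Theorem~\ref{t2.9}, the maximum on the path between two leaves being attained at their least common ancestor) gives $D\leqslant I$ with equality exactly when internal labels are injective; this simultaneously reproves the Gomory--Hu inequality and identifies the equality case. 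The equivalence (ii)$\Leftrightarrow$(iii) is a correct reading of Lemma~\ref{c25}; the two points you flag as needing care are exactly the right ones, and both go through: each summand $G_r^i$ is complete $k$-partite with $k=\delta^+(x_i)\geqslant 2$ and hence connected, forcing $p=1$, and a complete $k$-partite graph with $k\geqslant 3$ contains a triangle and so cannot be complete bipartite, forcing $\delta^+(x_i)=2$. Since every internal node carries some label in $\Sp{X}\setminus\{0\}$, quantifying over all nonzero $r$ recovers (iii) in full. I see no gap.
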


The following corollary of Theorem~\ref{t22} in fact states that subspaces of the space from the class $\mfu$ inherit the class.
\begin{corollary}[\!\!\cite{PD(UMB)}]\label{c22}
Let $X \in \mfu$ and $Y$ be a nonempty subspace of $X$. Then $Y \in \mfu$.
\end{corollary}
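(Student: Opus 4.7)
The plan is to reduce to the tree characterization given in Theorem~\ref{t22}(iii): we have $X\in\mfu$ iff $T_X$ is strictly binary with pairwise distinct labels on internal nodes. Thus it suffices to show that for every nonempty $Y\subseteq X$, the representing tree $T_Y$ also has these two properties.

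First I would describe how $T_Y$ sits inside $T_X$. By Proposition~\ref{lbpm}, the balls of $Y$ are precisely the distinct nonempty sets of the form $L_{T_v}\cap Y$ as $v$ ranges over $V(T_X)$. Since $T_X$ is strictly binary, an internal node $v$ has exactly two children $v_1,v_2$ with $L_{T_v}=L_{T_{v_1}}\sqcup L_{T_{v_2}}$. If only one of the intersections $L_{T_{v_i}}\cap Y$ is nonempty, then $L_{T_v}\cap Y$ coincides with that single nonempty piece and produces no new ball; if both are nonempty, then $L_{T_v}\cap Y$ is a genuinely new ball of $Y$ properly splitting into two subballs. So the internal nodes of $T_Y$ are in canonical bijection with those internal $v\in V(T_X)$ both of whose children have subtrees meeting $Y$, and every such $v$ contributes a node of out-degree exactly $2$ in $T_Y$. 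This already proves $T_Y$ is strictly binary.

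Next I would verify that the labels in $T_Y$ are inherited from the labels of the corresponding nodes in $T_X$. Fix a contributing internal node $v$, and pick $x_i\in L_{T_{v_i}}\cap Y$ for $i=1,2$. The unique $T_X$-path from the leaf $\{x_1\}$ to the leaf $\{x_2\}$ passes through $v$ and uses only nodes in $T_{v_1}\cup\{v\}\cup T_{v_2}$; since the labels of $T_X$ strictly decrease along any root-to-leaf path, $l_X(v)$ is the maximum label on this path. By Theorem~\ref{t2.9} we get $d(x_1,x_2)=l_X(v)$, hence $\diam(L_{T_v}\cap Y)\geqslant l_X(v)$, while the reverse inequality is automatic from $L_{T_v}\cap Y\subseteq L_{T_v}$. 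Therefore the label of the corresponding node of $T_Y$ equals $l_X(v)$. Since the labels of distinct internal nodes of $T_X$ are distinct by hypothesis, the same holds for $T_Y$, and an appeal to Theorem~\ref{t22} finishes the proof.

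The only real bookkeeping obstacle is the bijection between internal nodes of $T_Y$ and the "branching" nodes of $T_X$ with respect to $Y$. Once one notes that this bijection respects labels via Theorem~\ref{t2.9}, the two required properties of $T_Y$ drop out mechanically, and no direct computation with the spectra $\Sp(X)$, $\Sp(Y)$ is needed.
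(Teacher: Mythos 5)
Your proof is correct. Note that the paper quotes this corollary from \cite{PD(UMB)} and gives no proof where it is stated; the closest in-paper reasoning is the discussion in Section 6, which treats the two halves of the characterization in Theorem~\ref{t22}(iii) by separate and quicker devices: the strictly binary property is hereditary because it is equivalent to the absence of equilateral triangles (Proposition~\ref{p10}(iii)), a condition trivially inherited by subspaces, while the injectivity of the internal labeling is preserved by analysing the removal of a single leaf from $T_X$ (cases according to whether the parent of the removed leaf has two or more children) and iterating. You instead construct $T_Y$ inside $T_X$ in one pass: the internal nodes of $T_Y$ correspond to those internal $v\in V(T_X)$ both of whose children's leaf sets meet $Y$, each such node retains out-degree $2$, and Theorem~\ref{t2.9} shows that the corresponding label is exactly $l_X(v)$, so distinctness of labels carries over. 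Your route is longer but more informative: it exhibits $T_Y$ explicitly and proves label preservation uniformly, whereas the paper's equilateral-triangle shortcut disposes of the binary half in one line but says nothing about the structure of $T_Y$. Two small points worth making explicit in your write-up: the case $|Y|=1$ (where $|\Sp{Y}|=1=|Y|$ holds trivially) falls outside the hypotheses of Theorem~\ref{t22}; and the assertion that the balls of $Y$ are precisely the nonempty traces $L_{T_v}\cap Y$ needs the one-line justification that in an ultrametric space every point of a ball is a centre, so the trace of a ball of $X$ on $Y$ is a ball of $Y$ and conversely.
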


\noindent\textbf{2.2. Spaces for which the labels of different internal nodes of the representing trees are different.}
Omitting in statement (iii) of Theorem~\ref{t22} the condition ``$T_X$ is strictly binary'' we obtain the class of finite ultrametric spaces $X$ for which the labels of different internal nodes of $T_X$ are different.
In the following theorem we combine results obtained in~\cite{DP20} and~\cite{DP19}.
\begin{theorem}[\!\!\cite{DP20, DP19}]\label{t1}
Let $(X,d)$ be a finite ultrametric space with $|X|\geqslant 2$. The following conditions are equivalent.
\begin{itemize}
\item[\textup{(i)}] The labels of different internal nodes of $T_X$ are different.
\item[\textup{(ii)}] The graph $G'_{r,X}$ is complete multipartite for every nonzero $r\in \Sp{X}$.
\item[\textup{(iii)}] The graph \(G'_{r,X}\) is connected for every nonzero \(r \in \Sp{X}\).
\item[\textup{(iv)}]  The diameters of different nonsingular balls are different.
\item[\textup{(v)}]  The equality
$$
|\Sp{X}| = |\mathbf{B}_X| - |X| + 1
$$
holds.
\end{itemize}
\end{theorem}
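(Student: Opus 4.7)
The plan is to use Lemma~\ref{c25} and Proposition~\ref{lbpm} as the two workhorses, from which each of the four equivalences with (i) follows in a few lines. I would organize the proof as four short arguments, namely (i)$\Leftrightarrow$(ii), (i)$\Leftrightarrow$(iii), (i)$\Leftrightarrow$(iv), and (i)$\Leftrightarrow$(v), rather than a single long cycle, since each is essentially a direct unpacking of the relevant structural result.

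For (i)$\Leftrightarrow$(ii) and (i)$\Leftrightarrow$(iii), I would appeal directly to Lemma~\ref{c25}: for every nonzero $r\in\Sp{X}$ the graph $G'_{r,X}$ is a disjoint union of $p$ complete multipartite graphs, where $p$ is the number of internal nodes of $T_X$ labeled by $r$. Condition (i) is exactly the assertion that $p=1$ for every such $r$. If $p=1$, then $G'_{r,X}$ coincides with a single complete multipartite graph and is in particular connected. Conversely, if $p\geqslant 2$, then the union is genuinely disjoint and each summand has at least two vertices (because every internal node has at least two children, so each $G^i_r$ has at least one edge and at least two parts), so $G'_{r,X}$ is disconnected; and since a complete multipartite graph on at least two vertices is automatically connected, $G'_{r,X}$ is also not complete multipartite. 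Thus both (ii) and (iii) are equivalent to (i) by the same dichotomy.

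For (i)$\Leftrightarrow$(iv), I would use Proposition~\ref{lbpm}: balls of $X$ are in bijection with vertices of $T_X$ via $v\mapsto L_{T_v}$, with singular balls corresponding to leaves and nonsingular balls to internal nodes. The construction of $T_X$ (equivalently Theorem~\ref{t2.9} applied within the subtree $T_v$, together with monotonicity of labels along root-to-leaf paths) gives $\diam L_{T_v}=l_X(v)$ for every internal node $v$. Hence ``diameters of different nonsingular balls are different'' translates literally to ``labels of different internal nodes are different''.

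For (i)$\Leftrightarrow$(v), I would compute both sides separately. By Proposition~\ref{lbpm} and the fact that exactly $|X|$ vertices of $T_X$ are leaves, $|\mathbf{B}_X|-|X|$ equals the number of internal nodes of $T_X$. On the other hand, Theorem~\ref{t2.9} together with the construction of $T_X$ shows that $\Sp{X}\setminus\{0\}$ is precisely the set of labels appearing on internal nodes (every internal label $l_X(v)$ is realized as the distance between two leaves chosen in different subtrees of children of $v$), so $|\Sp{X}|-1$ is the number of \emph{distinct} internal labels. The inequality ``number of distinct internal labels $\leqslant$ number of internal nodes'' becomes equality exactly when (i) holds, which gives (i)$\Leftrightarrow$(v). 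The main ``obstacle'' here is essentially bookkeeping; the only genuinely nontrivial observation is that a complete multipartite graph on at least two vertices is connected, which is what makes (ii) and (iii) collapse onto the same combinatorial condition $p=1$.
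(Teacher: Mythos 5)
The paper states Theorem~\ref{t1} purely as a survey item imported from \cite{DP20, DP19} and contains no proof of it, so there is no internal argument to compare yours against. Judged on its own, your proposal is correct and complete relative to the tools the paper does quote. The equivalence of (ii) and (iii) with (i) is exactly the dichotomy $p=1$ versus $p\geqslant 2$ in Lemma~\ref{c25}; the two points you rely on --- that every nonzero $r\in\Sp{X}$ is the label of at least one internal node (so $p\geqslant 1$), and that each $G^i_r$ has $k=\delta^{+}(x_i)\geqslant 2$ nonempty parts and hence at least one edge, so a disjoint union of two or more of them is disconnected and therefore not complete multipartite --- are both valid and are the only places where care is needed. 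The translation of (iv) into (i) via the bijection $v\mapsto L_{T_v}$ of Proposition~\ref{lbpm}, with $\diam L_{T_v}=l_X(v)$ and internal nodes corresponding exactly to nonsingular balls, is correct, and the count for (v) ($|\mathbf{B}_X|-|X|$ equals the number of internal nodes, $|\Sp{X}|-1$ equals the number of distinct internal labels, with equality precisely under injectivity of the internal labeling) closes the argument. Organizing the proof as four separate equivalences with (i) rather than a cycle is a reasonable choice here, since (i) is the common combinatorial normal form to which each condition reduces in one step.
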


\noindent\textbf{2.3. Spaces for which the representing trees are strictly binary.}
Omitting in statement (iii) of Theorem~\ref{t22} the condition ``the labels of different internal nodes are different'' we obtain the class of finite ultrametric spaces having the strictly binary representing trees.

In the following we identify a finite ultrametric space $(X, d)$ with a complete weighted graph $G_X = (G_X, w)$, \(w \colon E(G_X) \to \mathbb{R}^{+}\), such that $V(G_X) = X$ and $w(\{x,y\}) = d(x,y)$
for all different \(x\), \(y \in X\).

\begin{proposition}[\!\!\cite{DPT}]\label{p10}
Let $(X,d)$ be a finite nonempty ultrametric space. The following conditions are equivalent.
\begin{itemize}
\item[\textup{(i)}] $T_X$ is strictly binary.
\item[\textup{(ii)}] If $Y\subseteq X$ and $|Y|\geqslant 3$, then there exists a Hamilton cycle $C \subseteq G_{Y}$ with exactly two edges of maximal weight.
\item[\textup{(iii)}] There are no equilateral triangles in $(X,d)$.
\end{itemize}
\end{proposition}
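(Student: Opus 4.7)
The plan is to prove the three-way equivalence by showing (i) $\Leftrightarrow$ (iii), (iii) $\Rightarrow$ (ii), and (ii) $\Rightarrow$ (iii).

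For (i) $\Leftrightarrow$ (iii), I would argue directly from Theorem~\ref{t2.9} and the fact that labels of $T_X$ strictly decrease along root-to-leaf paths. If $T_X$ is not strictly binary, some internal node $v$ has at least three children $v_1,v_2,v_3$; picking any leaves $y_i\in L_{T_{v_i}}$ gives an equilateral triangle with common distance $l_X(v)$, since $v$ lies on every $y_i$--$y_j$ path and carries the largest label there. Conversely, if $T_X$ is strictly binary and $\{y_1,y_2,y_3\}$ is any triple, their lowest common ancestor $v$ has two children, so two of the leaves (say $y_1,y_2$) lie in one child's subtree and $y_3$ in the other; then $d(y_1,y_3)=d(y_2,y_3)=l_X(v)$ strictly exceeds $d(y_1,y_2)$, ruling out the equilateral case.

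For (iii) $\Rightarrow$ (ii), fix $Y\subseteq X$ with $|Y|\geqslant 3$ and let $D=\operatorname{diam} Y$. Apply Theorem~\ref{t13} to the subspace $(Y,d)$: the diametrical graph $G_Y^d$ is complete $k$-partite on parts $Y_1,\ldots,Y_k$. If $k\geqslant 3$, picking one vertex from each of three distinct parts yields an equilateral triangle of side $D$, contradicting (iii); hence $k=2$. Let $|Y_1|=m\leqslant |Y_2|=n$, so $n\geqslant 2$. Enumerate $Y_1=\{a_1,\ldots,a_m\}$ and $Y_2=\{b_1,\ldots,b_n\}$ and form the Hamilton cycle
\[
a_1\,b_1\,b_2\,\cdots\,b_n\,a_2\,a_3\,\cdots\,a_m\,a_1
\qquad(\text{or } a_1\,b_1\,\cdots\,b_n\,a_1 \text{ if } m=1).
\]
Its two ``crossing'' edges $\{a_1,b_1\}$ and $\{b_n,a_2\}$ (resp.\ $\{b_n,a_1\}$) have weight $D$, while all remaining edges lie inside one part and hence have weight strictly less than $D$, giving exactly two maximum-weight edges.

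For (ii) $\Rightarrow$ (iii), I would argue contrapositively: if $X$ contains an equilateral triangle $\{x_1,x_2,x_3\}$, take $Y=\{x_1,x_2,x_3\}$; the only Hamilton cycle in $G_Y$ is the triangle itself, and all three of its edges have the common (and therefore maximum) weight, violating (ii).

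The main obstacle is the explicit construction in (iii) $\Rightarrow$ (ii); everything else reduces to unwinding definitions together with Theorems~\ref{t2.9} and~\ref{t13}. The key structural input is that ``no equilateral triangle'' forces the diametrical graph of every subspace to be complete \emph{bipartite} rather than merely complete multipartite, after which the Hamilton cycle is produced by concatenating any orderings of the two parts and inserting exactly two diametrical edges.
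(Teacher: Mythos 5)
The paper does not prove Proposition~\ref{p10}; it is imported verbatim from~\cite{DPT}, so there is no in-paper argument to compare against. Your proof is correct and self-contained: the equivalence (i)$\Leftrightarrow$(iii) via Theorem~\ref{t2.9} and the strict decrease of labels is sound (the lowest-common-ancestor pigeonhole in the binary case, and the three-children construction in the non-binary case, both work); the step (iii)$\Rightarrow$(ii) correctly uses Theorem~\ref{t13} on the subspace $(Y,d)$ to force the diametrical graph of $Y$ to be complete \emph{bipartite}, after which your explicit cycle has exactly the two crossing edges at weight $\diam Y$ and all intra-part edges strictly below it (including the degenerate case $|Y_1|=1$); and (ii)$\Rightarrow$(iii) by taking $Y$ to be the equilateral triple is immediate. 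This is essentially the natural route through the diametrical-graph machinery that the surrounding paper relies on, and I see no gap.
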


\noindent\textbf{2.4. Spaces for which the representing trees are strictly $n$-ary.}
Let $(X,d)$ be a metric space. Recall that balls $B_1$, $\ldots$, $B_k$ in $(X,d)$ are \emph{equidistant} if there is $r>0$ such that $d(x_i, x_j)=r$ holds whenever $x_i \in B_i$ and $x_j \in B_j$ and $1\leqslant i< j \leqslant k$. Every two disjoint balls in any ultrametric space are equidistant.

\begin{theorem}[\!\cite{DP20}]\label{t27}
Let $(X,d)$ be a finite ultrametric space with $|X| \geqslant 2$ and let $n\geqslant 2$ be integer. The following conditions are equivalent.
\begin{itemize}
\item [\textup{(i)}] $T_X$ is strictly $n$-ary.
\item [\textup{(ii)}] For every nonzero $t\in \Sp{X}$, the graph $G'_{t,X}$ is the union of $p$ complete $n$-partite graphs, where $p$ is the number of internal nodes of $T_X$ labeled by~$t$.
\item [\textup{(iii)}] For every nonsingular ball $B \in \mathbf B_X$, there are equidistant disjoint balls $B_1,...,B_n \in \mathbf B_X$ such that $B=\bigcup\limits_{j=1}^n B_j$.
\item [\textup{(iv)}] The equality
\begin{equation*}
(n-1)|\mathbf{B}_Y| +1 = n |Y|
\end{equation*}
holds for every ball $Y\in \mathbf B_X$.
\end{itemize}
\end{theorem}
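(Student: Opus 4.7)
The plan is to prove (i)$\Leftrightarrow$(ii), (i)$\Leftrightarrow$(iii), and (i)$\Leftrightarrow$(iv) separately; each rests on a different piece of the ball/tree dictionary developed in Section~1, so no circular chain is needed.

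The equivalence (i)$\Leftrightarrow$(ii) is essentially a restatement of Lemma~\ref{c25}: its decomposition~\eqref{eq25} exhibits $G'_{t,X}$ as a disjoint union of complete $\delta^+(x_i)$-partite graphs, one per internal node $x_i$ labeled by $t$; requiring each piece to be complete $n$-partite for every nonzero $t\in\Sp{X}$ is the same as $\delta^+(v)=n$ for every internal node $v$ of $T_X$. For (i)$\Leftrightarrow$(iii) I would use Proposition~\ref{lbpm}: a nonsingular ball $B$ corresponds to an internal node $v$ with children $v_1,\dots,v_k$, and the balls $C_i=L_{T_{v_i}}$ are pairwise disjoint with union $B$. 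By Theorem~\ref{t2.9} any two points chosen from different $C_i$'s have common distance $l_X(v)=\diam B$, so this canonical decomposition is already equidistant, giving (i)$\Rightarrow$(iii) with $B_i=C_i$. For the converse, given any equidistant decomposition $B=B_1\cup\cdots\cup B_n$ with common distance $r$, every proper sub-ball of $B$ has diameter strictly less than $\diam B$ (labels strictly decrease along root-to-leaf paths in $T_X$), and since the pair attaining $\diam B$ must lie in different $B_m$'s, we get $r=\diam B$. The laminar structure of ultrametric balls then forces each $B_m$ to be contained in a unique child $C_i$; a short injectivity argument on the map $m\mapsto i$ (two $B_m$'s inside the same $C_i$ would violate the equidistance) shows that $B_m=C_i$, hence $n=k$.

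The equivalence (i)$\Leftrightarrow$(iv) reduces to a combinatorial identity. For a strictly $n$-ary rooted tree with $I$ internal nodes and $L$ leaves, counting edges (each non-root node contributes one incoming, each internal node contributes $n$ outgoing) gives $(n-1)I=L-1$. Applied to $T_Y$ with $L=|Y|$ and $I+L=|\mathbf{B}_Y|$ by Proposition~\ref{lbpm}, this rearranges to~(iv). Conversely, no induction is required: for any nonsingular ball $Y\in\mathbf{B}_X$ with children-balls $Y_1,\dots,Y_k$ one has $|\mathbf{B}_Y|=1+\sum_{j=1}^k|\mathbf{B}_{Y_j}|$ and $|Y|=\sum_{j=1}^k|Y_j|$; substituting (iv) applied to $Y$ and to each $Y_j$ (singletons satisfy it trivially since $(n-1)\cdot 1+1=n\cdot 1$) and simplifying forces $k=n$, so every internal node has exactly $n$ children. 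The main subtlety I anticipate is in (iii)$\Rightarrow$(i), namely making the laminar/diameter argument precise; the counting equivalence is otherwise routine bookkeeping.
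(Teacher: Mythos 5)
Your argument is correct, and all three equivalences check out: (i)$\Leftrightarrow$(ii) does follow from the uniqueness of the parts in the decomposition of Lemma~\ref{c25}; your (iii)$\Rightarrow$(i) correctly pins down $r=\diam B$ via the strict decrease of labels and then identifies the $B_m$ with the children balls $C_i$ by the disjoint-or-nested property of ultrametric balls; and the counting in (i)$\Leftrightarrow$(iv) (edges of a strictly $n$-ary tree give $(n-1)I=L-1$, combined with $|\mathbf{B}_Y|=I+L$ and $|Y|=L$ from Proposition~\ref{lbpm}, plus the local recursion $|\mathbf{B}_Y|=1+\sum_j|\mathbf{B}_{Y_j}|$ for the converse) is exact. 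Note, however, that this paper is a survey at this point: Theorem~\ref{t27} is quoted from \cite{DP20} and no proof is reproduced here, so there is no in-paper argument to compare against. Your derivation is a legitimate self-contained reconstruction built entirely from the toolkit the paper does develop (Lemma~\ref{c25}, Proposition~\ref{lbpm}, Theorem~\ref{t2.9}), and I see no gaps.
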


Write
$$
\Delta^+(T) := \max_{v \in V(T)} \delta^+(v).
$$
\begin{corollary}[\!~\cite{DP20}]\label{c2.10}
The inequality
\begin{equation*}
|\mathbf{B}_X| \geqslant \frac{\Delta^+(T_X)|X|-1}{\Delta^+(T_X)-1}
\end{equation*}
holds for every finite nonempty ultrametric space $(X,d)$. This inequality becomes an equality if and only if $T_X$ is a strictly $n$-ary rooted tree with $n = \Delta^+(T_X)$.
\end{corollary}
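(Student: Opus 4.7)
The plan is to translate the statement into a purely combinatorial fact about the rooted tree $T_X$ using Proposition~\ref{lbpm}, and then conclude by a double-counting of edges. Let $n := \Delta^+(T_X)$; by Proposition~\ref{lbpm} we have $|\mathbf{B}_X| = |V(T_X)|$. Write $L$ for the number of leaves of $T_X$ and $I$ for the number of internal nodes, so $|V(T_X)| = I + L$. Since each $x \in X$ corresponds to the leaf $\{x\}$ of $T_X$, and conversely each leaf of $T_X$ is of this form, we have $L = |X|$.

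The core step is an edge count performed in two ways. On the one hand every non-root vertex has a unique parent edge, so $T_X$ has exactly $I + L - 1$ edges. On the other hand each internal node $v$ contributes $\delta^+(v)$ edges (one per child), giving
\[
\sum_{v \text{ internal}} \delta^+(v) \;=\; I + L - 1.
\]
Using the bound $\delta^+(v) \leqslant n$ for every internal node $v$ we obtain $nI \geqslant I + L - 1$, i.e.\ $(n-1)I \geqslant L - 1$. (Note that $n \geqslant 2$ whenever $|X| \geqslant 2$, because by construction every internal node of $T_X$ has at least $k \geqslant 2$ children.) Therefore
\[
|\mathbf{B}_X| \;=\; I + L \;\geqslant\; \frac{L-1}{n-1} + L \;=\; \frac{nL - 1}{n-1} \;=\; \frac{\Delta^{+}(T_X)|X|-1}{\Delta^{+}(T_X)-1},
\]
which is the required inequality.

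For the characterization of equality, observe that the only inequality in the argument above was $\delta^+(v) \leqslant n$ summed over internal nodes. Consequently equality in the corollary is equivalent to $\delta^+(v) = n$ for every internal $v$, which is exactly the condition that $T_X$ be strictly $n$-ary with $n = \Delta^{+}(T_X)$. (In the ``if'' direction this can alternatively be deduced from condition (iv) of Theorem~\ref{t27} applied to the ball $Y = X$.) No serious obstacle arises; the only care needed is the trivial case $|X| = 1$, where $T_X$ has the single node $X$, no internal nodes, and both sides of the inequality equal $1$ (with the natural convention for the right-hand side).
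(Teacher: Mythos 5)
The paper states this corollary as an imported result from \cite{DP20} and gives no proof of its own, so there is no in-paper argument to compare against; judged on its own, your proof is correct and self-contained. The route you take is the natural one: Proposition~\ref{lbpm} gives $|\mathbf{B}_X|=|V(T_X)|$, the leaves of $T_X$ are exactly the singletons so $L=|X|$, and the double count $\sum_{v\ \mathrm{internal}}\delta^+(v)=I+L-1$ combined with $\delta^+(v)\leqslant\Delta^+(T_X)$ yields the inequality, with equality precisely when every internal node attains the maximal out-degree, i.e.\ when $T_X$ is strictly $n$-ary with $n=\Delta^+(T_X)$; this is also consistent with condition (iv) of Theorem~\ref{t27} applied to $Y=X$. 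One cosmetic remark: for $|X|=1$ no convention is actually needed, since $\Delta^+(T_X)=0$ makes the right-hand side equal to $(-1)/(-1)=1$, matching $|\mathbf{B}_X|=1$.
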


\begin{proposition}[\!\!\cite{DP20}]\label{p2.11}
Let $(X, d)$ be a finite ultrametric space with $|X|\geq 2$. Then the inequality
\begin{equation*}
2|\mathbf{B}_X| \geqslant |\operatorname{Sp}(X)| + \frac{2\Delta^+(T_X)|X|-\Delta^+(T_X)-|X|}{\Delta^+(T_X) - 1}
\end{equation*}
holds. This inequality becomes an equality if and only if $T_X$ is a strictly $n$-ary rooted tree with injective internal labeling and $n=\Delta^+(T_X)$.
\end{proposition}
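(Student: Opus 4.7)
The strategy is to recognize the right-hand side as the sum of two independent lower bounds on $|\mathbf{B}_X|$ that have already been established (or are immediate), and to observe that the two bounds can be saturated simultaneously exactly under the stated characterization.

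The first ingredient is Corollary~\ref{c2.10}, which gives
\[
|\mathbf{B}_X| \geqslant \frac{\Delta^+(T_X)|X|-1}{\Delta^+(T_X)-1},
\]
with equality if and only if $T_X$ is a strictly $n$-ary rooted tree with $n=\Delta^+(T_X)$. The second ingredient is the sharp bound $|\mathbf{B}_X| \geqslant |\operatorname{Sp}(X)| + |X| - 1$. To justify it, I would count internal nodes of $T_X$: by Proposition~\ref{lbpm} their number equals the number of non-singular balls, i.e.\ $|\mathbf{B}_X| - |X|$. By construction internal labels are positive, and by Theorem~\ref{t2.9} every nonzero element of $\operatorname{Sp}(X)$ occurs as the label of some internal node. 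Hence $|\operatorname{Sp}(X)| - 1 \leqslant |\mathbf{B}_X| - |X|$, with equality precisely when the internal labeling is injective; this is just the equivalence (i)$\Leftrightarrow$(v) of Theorem~\ref{t1}.

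Adding the two inequalities and writing $n = \Delta^+(T_X)$ for brevity, I would compute
\[
(|X|-1) + \frac{n|X|-1}{n-1} = \frac{(|X|-1)(n-1) + n|X|-1}{n-1} = \frac{2n|X| - n - |X|}{n-1},
\]
which reorganizes the sum into
\[
2|\mathbf{B}_X| \geqslant |\operatorname{Sp}(X)| + \frac{2\Delta^+(T_X)|X| - \Delta^+(T_X) - |X|}{\Delta^+(T_X) - 1},
\]
as required. Equality in the sum forces equality in both component bounds, yielding simultaneously that $T_X$ is strictly $n$-ary with $n=\Delta^+(T_X)$ and that the internal labeling is injective; conversely, under these two conditions both bounds are tight and the asserted equality holds.

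There is no real technical obstacle here: the bounds come directly from Corollary~\ref{c2.10} and from the elementary node-versus-label count; the only thing to check is the algebraic identity above and the independence of the two equality conditions, so that their conjunction is the precise characterization of equality in the combined inequality.
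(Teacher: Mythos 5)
The paper does not actually prove this proposition; it is quoted verbatim from \cite{DP20}, so there is no in-paper argument to compare yours against. On its own merits your proof is correct and is the natural derivation: Corollary~\ref{c2.10} gives $|\mathbf{B}_X| \geqslant \frac{\Delta^+(T_X)|X|-1}{\Delta^+(T_X)-1}$ with equality exactly when $T_X$ is strictly $n$-ary with $n=\Delta^+(T_X)$; Proposition~\ref{lbpm} identifies the internal nodes with the nonsingular balls, and Theorem~\ref{t2.9} shows the nonzero spectrum is exactly the set of internal labels, so $|\operatorname{Sp}(X)| \leqslant |\mathbf{B}_X|-|X|+1$ with equality precisely for injective internal labeling (the equivalence (i)$\Leftrightarrow$(v) of Theorem~\ref{t1}, as you note). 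Your algebraic combination $(|X|-1)+\frac{n|X|-1}{n-1}=\frac{2n|X|-n-|X|}{n-1}$ checks out, and the equality case is handled correctly since a sum of two inequalities is an equality exactly when both summands are; the only cosmetic caveat is that $\Delta^+(T_X)\geqslant 2$ (guaranteed by $|X|\geqslant 2$ and Theorem~\ref{t13}) is what makes the denominator harmless.
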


\noindent\textbf{2.5. Ultrametric spaces which are as rigid as possible.} Let $(X, d)$ be a metric space and let $\operatorname{Iso}(X)$ be the group of isometries of $(X, d)$. We say that $(X, d)$ is \emph{rigid}  if $|\operatorname{Iso}(X)|=1$. It is clear that $(X, d)$ is rigid if and only if $g(x)=x$ for every $x \in X$ and every $g \in \Iso(X)$.

For every self-map $f\colon X\to X$ we denote by $\Fix(f)$ the set of fixed points of $f$. Using this denotation we obtain that a finite metric space $(X, d)$ is rigid if and only if
$$
\min_{g \in \operatorname{Iso}(X)} |\Fix(g)|=|X|.
$$
 It is easy to show that the finite ultrametric spaces $X$ with $|X| \geq 2$ are not rigid since for every such $X$ there is a self-isometry having exactly $|X|-2$ fixed points, see Proposition 3.2 in~\cite{DPT(Howrigid)}.

If a metric space $(X, d)$ is finite, nonempty and nonrigid, then the inequality
\begin{equation*}
\min_{g \in \operatorname{Iso}(X)} |\Fix(g)| \leq |X|-2
\end{equation*}
holds, because the existence of $|X|-1$ fixed points for $g \in \operatorname{Iso}(X)$ implies that $g$ is identical.

The quantity $\min_{g \in \operatorname{Iso}(X)} |\Fix(g)|$ can be considered as a measure for ``rigidness'' for finite metric spaces $(X, d)$. Thus the finite ultrametric spaces satisfying the equality
\begin{equation*}
\min_{g \in \operatorname{Iso}(X)} |\Fix(g)| = |X|-2,
\end{equation*}
are as rigid as possible. Let us denote by $\mathfrak{R}$ the class of all finite ultrametric spaces $(X, d)$ which satisfy this equality. The following theorem gives us a characterization of spaces from the class $\mathfrak{R}$ in terms of representing trees.

\begin{theorem}[\!\!\cite{DPT(Howrigid)}]
Let $(X, d)$ be a finite ultrametric space with $|X| \geq 2$. Then the following statements are equivalent.
\begin{itemize}
\item [\textup{(i)}] $(X, d) \in \mathfrak{R}$.
\item [\textup{(ii)}] $|\Iso(X)|=2$.
\item [\textup{(iii)}] $T_X$ is strictly binary with exactly one inner node at each level except the last level.
\end{itemize}
\end{theorem}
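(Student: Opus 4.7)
The plan is to prove the cyclic implications (i) $\Rightarrow$ (ii) $\Rightarrow$ (iii) $\Rightarrow$ (i).

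\textbf{For (i) $\Rightarrow$ (ii),} I first note that (i) forces every non-identity isometry to fix exactly $|X|-2$ points, since such an isometry fixes at most $|X|-2$ points (by the inequality recalled above) and at least $|X|-2$ by the hypothesis on the minimum; the same hypothesis rules out $|\Iso(X)|=1$. Suppose, for contradiction, that two distinct non-identity isometries $g_1, g_2$ exist, and set $A_i = X \setminus \Fix(g_i)$, so $|A_i| = 2$. If $A_1 = A_2$ then both $g_i$ act as the transposition of $A_i$ and as identity elsewhere, forcing $g_1 = g_2$. If $A_1 \cap A_2 = \emptyset$ then $g_1 g_2$ is a non-identity isometry with non-fixed set $A_1 \cup A_2$, giving $|\Fix(g_1 g_2)| = |X|-4$. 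If $|A_1 \cap A_2| = 1$ then $g_1 g_2$ is a $3$-cycle, giving $|\Fix(g_1 g_2)| = |X|-3$. Both latter cases violate $\min_g|\Fix(g)| = |X|-2$, so $|\Iso(X)| = 2$.

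\textbf{For (ii) $\Rightarrow$ (iii),} Theorem~\ref{l3.3} identifies $\Iso(X)$ with the group of label-preserving automorphisms of the labeled rooted tree $T_X$, so the task reduces to showing that a labeled rooted tree $T$ whose every inner node has out-degree $\geq 2$ and that satisfies $|\operatorname{Aut}(T)| = 2$ must be a strictly binary caterpillar with one inner node at every level except the last. The main tool is the recursive identity
\[
|\operatorname{Aut}(T)| = \prod_{i=1}^m n_i!\cdot|\operatorname{Aut}(T_i)|^{n_i},
\]
where $T_1, \dots, T_m$ are the distinct isomorphism types of the root's children subtrees with multiplicities $n_1, \dots, n_m$. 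A key preliminary observation is that, within this class of trees, any $T$ with $\geq 2$ vertices admits a non-trivial automorphism: at a deepest inner node the at-least-two leaf children all share label $0$ and can be swapped. Hence $|\operatorname{Aut}(T')| = 1$ holds only when $T'$ is a single leaf. Forcing the right-hand side of the displayed formula to equal $2$ then leaves only two admissible configurations at the root: (A) the root has exactly two leaf children (base case, $|X|=2$), or (B) the root has exactly one leaf child and exactly one non-leaf child $u$ with $|\operatorname{Aut}(T_u)| = 2$. Induction on $|V(T)|$ applied in case (B) to $T_u$ yields the caterpillar shape.

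\textbf{For (iii) $\Rightarrow$ (i),} a direct inspection of a caterpillar satisfying (iii) shows that the only non-identity label-preserving automorphism is the swap of the two leaf children of the unique bottom inner node: at every higher inner node the two children differ in isomorphism type (one leaf, one non-trivial subtree), so no other swap is possible. Via Theorem~\ref{l3.3} this produces a unique non-identity isometry of $(X,d)$ with exactly $|X|-2$ fixed points, so $\min_{g\in \Iso(X)}|\Fix(g)| = |X|-2$, i.e.\ $(X,d) \in \mathfrak{R}$.

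The most delicate step is the recursive analysis in (ii) $\Rightarrow$ (iii): one must carefully combine the product formula with the rigidity principle ``only single leaves have trivial automorphism group'' to rule out the tempting configurations in which the root has several non-isomorphic non-leaf branches while the unique symmetry is hidden deep inside one of them.
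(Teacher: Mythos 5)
Your proposal is correct, but note that this paper does not actually prove the theorem: it is quoted from \cite{DPT(Howrigid)} as a survey item, so there is no in-text proof to compare against. Your three implications are all sound. The reduction of (i)$\Rightarrow$(ii) to the observation that every non-identity isometry must be a transposition, followed by composing two distinct transpositions to violate $\min_g|\Fix(g)|=|X|-2$, is clean and complete. The case analysis in (ii)$\Rightarrow$(iii) is also complete: since every inner node of a representing tree has out-degree at least $2$, a deepest inner node always has two swappable leaf children, so only a single leaf has trivial automorphism group, and the product formula then forces exactly the two root configurations you list; the induction on $T_u$ is legitimate because $T_u$ is itself the representing tree of the ball $L_{T_u}$ (Proposition~\ref{lbpm}).

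The one step you should flesh out is the identification of $\Iso(X)$ with the group of label-preserving automorphisms of $(T_X,l_X)$. Theorem~\ref{l3.3} as quoted only asserts a bijection between isometry classes and isomorphism classes of labeled trees, not a group isomorphism. The identification does hold, but it needs two supporting remarks: a label-preserving automorphism of $T_X$ restricts to a permutation of the leaves that is an isometry by Theorem~\ref{t2.9} (paths map to paths and labels are preserved), and conversely an automorphism of $T_X$ is uniquely determined by its action on leaves because every node $v$ satisfies $v=L(T_v)$. With that paragraph added, the argument is a complete and self-contained proof of the cited theorem.
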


\noindent\textbf{2.6. Weak similarity generating spaces.}  Denote by $\tilde{\mathfrak R}$ the class of finite ultrametric spaces $X$ for which $T_X$ has exactly one inner node at each level except the last level. It is clear that $\mathfrak R$ is a proper subclass of $\tilde{\mathfrak R}$.

\begin{definition}\label{d4.5}
Let  $(X,d)$ and $(Y,\rho)$ be metric spaces. A bijective mapping $\Phi\colon X\to Y$ is a \emph{weak similarity} if there exists a strictly increasing bijection $f\colon \Sp{X}\to \Sp{Y}$ such that the equality
\begin{equation*}
f(d(x,y))=\rho(\Phi(x),\Phi(y))
\end{equation*}
holds for all $x$, $y\in X$. The function $f$ is said to be a \emph{scaling function} of $\Phi$. If $\Phi\colon X\to Y$ is a weak similarity, we write $X \we Y$ and say that $X$ and  $Y$  are \emph{weakly similar}. The pair $(f,\Phi)$ is called a \emph{realization} of $X\we Y$.
\end{definition}

In~\cite{DP2} the notion of weak similarity was introduced in a slightly different but equivalent form.

The next theorem gives a description of finite ultrametric spaces for which the isomorphism of representing  trees implies the weak similarity of the spaces.
\begin{theorem}[\!\!\cite{P18}]
Let $X$ be a finite ultrametric space. Then the following statements are equivalent.
\begin{itemize}
  \item [\textup{(i)}] The implication $(\overline{T}_X\simeq \overline{T}_Y) \Rightarrow (X\we Y)$ holds for every finite ultrametric space $Y$.
  \item [\textup{(ii)}] $X\in \tilde{\mathfrak{R}}$.
\end{itemize}
\end{theorem}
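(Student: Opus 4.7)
The plan is to prove the two implications separately. The forward implication (ii)\,$\Rightarrow$\,(i) exploits the canonical chain of inner nodes forced by $X \in \tilde{\mathfrak{R}}$, while the reverse is proved by contrapositive via a subtree-rescaling construction.

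For (ii)\,$\Rightarrow$\,(i), I list the inner nodes of $T_X$ as $v_0, v_1, \ldots, v_m$, where $v_0$ is the root and $v_{i+1}$ is the unique inner child of $v_i$. Given $Y$ with $\overline{T}_Y \simeq \overline{T}_X$, the tree $T_Y$ inherits the same unlabeled structure, so its inner nodes also form a chain $v_0', \ldots, v_m'$. A straightforward induction on depth shows that every isomorphism $\Psi\colon \overline{T}_X \to \overline{T}_Y$ satisfies $\Psi(v_i) = v_i'$: the root maps to the root, and at each level the unique inner child must map to the unique inner child. Since labels strictly decrease along both chains, the map $f\colon \Sp{X} \to \Sp{Y}$ defined by $f(0) = 0$ and $f(l_X(v_i)) = l_Y(v_i')$ is a strictly increasing bijection. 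By Theorem~\ref{t2.9}, the distance between any two points of $X$ equals the label of their lowest common ancestor in $T_X$, and similarly in $Y$; since $\Psi$ preserves the lowest-common-ancestor relation, the restriction of $\Psi$ to leaves together with $f$ is a weak similarity.

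For (i)\,$\Rightarrow$\,(ii), I argue the contrapositive. If the inner nodes of $T_X$ do not form a chain, some inner node $u$ has at least two inner children $a, b$; write $\alpha := l_X(a)$ and $\beta := l_X(b)$. Construct $Y$ by keeping $\overline{T}_Y := \overline{T}_X$ and setting
\[
l_Y(x) := \begin{cases} l_X(x), & x \notin V(T_b),\\ c\, l_X(x), & x \in V(T_b), \end{cases}
\]
for a scalar $c \in (0, l_X(u)/\beta)$ to be chosen. Any such $c$ keeps $l_Y$ strictly decreasing along root-to-leaf paths, so $(T_Y, l_Y)$ is a valid representing tree of a finite ultrametric space $Y$ (with distances recovered via Theorem~\ref{t2.9}). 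Writing $R$ for the set of positive labels of nodes outside $V(T_b)$ and $B$ for the positive labels inside, a direct count gives
\[
|\Sp{Y}| - |\Sp{X}| = |R \cap B| - |R \cap cB|.
\]
If $R \cap B \neq \varnothing$ (which happens, for instance, whenever $\alpha = \beta$), a generic $c$ (avoiding the finitely many ratios $r/b$ with $r \in R$, $b \in B$) makes $R \cap cB = \varnothing$ and hence the right-hand side positive. If $R \cap B = \varnothing$, the choice $c = \alpha/\beta$ forces $\alpha \in R \cap cB$; it lies in the valid range because $\alpha < l_X(u)$ implies $\alpha/\beta < l_X(u)/\beta$, and the right-hand side becomes negative. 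In either case $|\Sp{X}| \neq |\Sp{Y}|$, so no strictly increasing bijection between the spectra exists and $X \not\we Y$, while $\overline{T}_X \simeq \overline{T}_Y$ by construction. This contradicts~(i).

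The main obstacle is the contrapositive direction. The scaling trick has to uniformly break weak similarity regardless of which two inner siblings $a, b$ the tree happens to provide, and this is achieved via the cardinality identity above combined with the dichotomy on $R \cap B$. Verifying that the chosen $c$ lies in $(0, l_X(u)/\beta)$ and genuinely changes the spectrum cardinality (rather than accidentally preserving it) is the delicate point. The forward direction is conceptually much easier, since the chain is canonically detected in the unlabeled tree and two strictly decreasing finite sequences of equal length are automatically matched order-preservingly.
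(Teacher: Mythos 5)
The paper states this theorem as a result imported from \cite{P18} and gives no proof of its own, so there is no in-paper argument to compare against; judged on its own merits, your proposal is correct. The forward direction is sound: for $X\in\tilde{\mathfrak R}$ the inner nodes are closed under taking parents and form a chain that any rooted-tree isomorphism must respect level by level, the two spectra are the two strictly decreasing label sequences together with $0$, and Theorem~\ref{t2.9} reduces each distance to the label of the lowest common ancestor, which $\Psi$ preserves. The contrapositive is also sound: reducing the negation of $X\in\tilde{\mathfrak R}$ to the existence of an inner node $u$ with two inner children $a,b$ is legitimate (precisely because the inner nodes form a rooted subtree, which is a chain iff no inner node has two inner children); the rescaled labeling stays strictly decreasing along root-to-leaf paths because $c\beta<l_X(u)$, so $(\overline{T}_X,l_Y)$ is the representing tree of a genuine finite ultrametric space --- the same relabeling device the paper itself uses in the proof of Theorem~\ref{t5}; and the identity $|\Sp{Y}|-|\Sp{X}|=|R\cap B|-|R\cap cB|$ combined with your dichotomy on $R\cap B$ does force $|\Sp{X}|\neq|\Sp{Y}|$ (note that in the second case $c=\alpha/\beta\neq 1$ automatically, since $\alpha=\beta$ would put $\alpha$ in $R\cap B$), which kills any weak similarity because its scaling function must be a bijection of spectra. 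The only step worth stating explicitly in a final write-up is the standing fact that every monotone relabeling of $\overline{T}_X$ with positive labels at inner nodes realizes a finite ultrametric space whose representing tree is exactly that labeled tree; the paper uses this without comment, so you may as well, but it is the load-bearing hypothesis of your construction.
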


The following lemma in fact states that subspaces of the space from the class $\tilde{\mathfrak{R}}$ inherit the class.

\begin{lemma}[\cite{DP18}]\label{l5.11}
Let $(X,d)$ be a finite ultrametric space with $|X|\geqslant 2$, and let $T_X$ have exactly one internal node at each level except the last level. Then for every $Y\subseteq X$, $|Y|\geqslant 2$, the representing tree $T_Y$ of the space $(Y,d)$ also has exactly one internal node at each level except the last level.
\end{lemma}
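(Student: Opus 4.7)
The plan is to translate the hypothesis on $T_X$ into an equivalent statement about the family of non-singular balls of $X$, and then transport that statement to $Y$.

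First, I would observe that by Proposition~\ref{lbpm} the inner nodes of $T_X$ are in bijection with the non-singular balls of $X$ via $v \mapsto L_{T_v}$. Under this bijection, the hypothesis that $T_X$ has exactly one inner node at each level except the last becomes the statement that the inner nodes of $T_X$ form a single chain $v_0, v_1, \ldots, v_m$ in which $v_{i+1}$ is a descendant of $v_i$; equivalently, the non-singular balls of $X$ are totally ordered by inclusion:
\[
X = L_{T_{v_0}} \supsetneq L_{T_{v_1}} \supsetneq \cdots \supsetneq L_{T_{v_m}}.
\]
Indeed, two inner nodes of $T_X$ are incomparable in the ancestor order if and only if the corresponding balls are disjoint, so uniqueness of an inner node per level is exactly the chain condition on balls.

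Next, I would use the standard fact that in an ultrametric space the balls of a subspace $(Y,d)$ are precisely the non-empty intersections $B \cap Y$ with $B \in \mathbf{B}_X$. Intersecting the chain above with $Y$ produces the nested sequence
\[
Y = L_{T_{v_0}} \cap Y \supseteq L_{T_{v_1}} \cap Y \supseteq \cdots \supseteq L_{T_{v_m}} \cap Y.
\]
After discarding singletons and collapsing equal consecutive terms, the remaining sets form a strictly decreasing chain of non-singular balls of $Y$. Since every non-singular ball of $Y$ is of the form $L_{T_v} \cap Y$ for some inner node $v$ of $T_X$ (take a minimal such $v$), this chain enumerates \emph{all} non-singular balls of $Y$.

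Finally, I would invoke the equivalence of the first step in the reverse direction, now applied to $Y$: a totally ordered family of non-singular balls forces the inner nodes of $T_Y$ to lie on a single root-to-descendant path, whence $T_Y$ has exactly one inner node at each level except the last. The step requiring the most care is the middle one: verifying that each non-singular ball of $Y$ really does arise as $L_{T_{v_i}} \cap Y$ for some $i$, and handling singletons so that they are correctly identified with leaves of $T_Y$ rather than inner nodes. Beyond this bookkeeping there is no genuine obstacle, since the argument simply exhibits that ``inner nodes of $T_X$ form a chain'' is tautologically the same as ``non-singular balls of $X$ are totally ordered'', and the latter property is manifestly preserved under passage to subspaces.
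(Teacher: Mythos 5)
The paper does not prove this lemma: it is imported verbatim from \cite{DP18} and used as a black box, so there is no in-paper argument to compare yours against. Judged on its own, your proof is correct and complete. The dictionary you set up is exactly the right one: by Proposition~\ref{lbpm} the internal nodes of $T_X$ are the non-singular balls, the ancestor order is reverse inclusion, and since any intermediate vertex on a path between two internal nodes is itself internal, ``the internal nodes form a chain'' is indeed equivalent to ``exactly one internal node per level except the last,'' not merely to ``at most one per level.'' The transport to $Y$ rests on the standard ultrametric fact that every point of a ball is a centre, so the balls of $Y$ are exactly the non-empty traces $B\cap Y$ with $B\in\mathbf{B}_X$; a non-singular ball of $Y$ can only be the trace of a non-singular ball of $X$, hence sits in the intersected chain, and the chain condition passes to $Y$. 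The one spot worth a sentence more of care in a written-up version is the bookkeeping you already flag: traces that become empty must be discarded along with singletons before reading off the chain of non-singular balls of $Y$, and the hypothesis $|Y|\geqslant 2$ guarantees the resulting chain is non-empty (its top element is $Y$ itself). With that noted, the argument is airtight and arguably more conceptual than a direct induction on leaf removal would be.
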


Denote by $\mathfrak D$ the class of all finite ultrametric spaces $X$ such that the different internal nodes of $T_X$ have the different labels.
It is clear that $\mathfrak R$ and $\tilde{\mathfrak  R}$ are subclasses of $\mathfrak D$. A question arises whether there exist finite ultrametric spaces $X$, $Y\in \mathfrak D$ which do not belong to the class $\tilde{\mathfrak  R}$ and for which the isomorphism of $\overline{T}_X$ and $\overline{T}_Y$ implies $X\we Y$.

Let us define a rooted tree $T$  with $n$ levels by the following two conditions:

(A) There is only one inner node at the level $k$ of $T$ whenever $k<n-1$.

(B) If $u$ and $v$ are different inner nodes at the level $n-1$ then the numbers of offsprings of $u$ and $v$ are equal.

Denote by $\mct$ the class of all finite ultrametric spaces $X$ for which $T_X$ satisfies conditions (A) and (B).

\begin{theorem}[\!\!\cite{P18}]
Let $X \in \mathfrak D$ be a finite ultrametric space. Then the following statements are equivalent.
\begin{itemize}
  \item [\textup{(i)}] The implication $(\overline{T}_X\simeq \overline{T}_Y) \Rightarrow (X\we Y)$ holds for every finite ultrametric space $Y \in \mathfrak D$.
  \item [\textup{(ii)}] $X \in \mct$.
\end{itemize}
\end{theorem}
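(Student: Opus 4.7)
The plan is to reduce the whole statement to a transparent combinatorial claim about the underlying rooted tree $\overline{T}_X$. The core observation is that, for $X,Y\in\mathfrak D$ with $\overline{T}_X\simeq\overline{T}_Y$, the labelling $l_X$ (resp.\ $l_Y$) induces a total order $\prec_X$ (resp.\ $\prec_Y$) on the inner nodes that is a linear extension of the ancestry partial order (parent $\succ$ child). A weak similarity exists if and only if some rooted-tree isomorphism $\Psi\colon\overline{T}_X\to\overline{T}_Y$ sends $\prec_X$ to $\prec_Y$; the scaling function is then the unique strictly increasing bijection $f\colon\Sp{X}\to\Sp{Y}$ with $f\circ l_X=l_Y\circ\Psi$, well-defined by the $\mathfrak D$-hypothesis. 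Conversely, any linear extension of ancestry on the inner nodes of $\overline{T}_X$ is the order induced by some $Y\in\mathfrak D$ (assign strictly decreasing integer labels along the order). Thus condition (i) is equivalent to $\operatorname{Aut}(\overline{T}_X)$ acting transitively on the set of linear extensions of ancestry on inner nodes.

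For (ii)$\Rightarrow$(i): when $X\in\mct$ the poset of inner nodes is a chain $v_0\succ v_1\succ\cdots\succ v_{n-2}$ (the spine, necessarily occupying the top $n-1$ ranks of any linear extension) followed by an antichain $\{u_1,\dots,u_p\}$ of level-$(n-1)$ inner nodes which can be listed in any order. By condition (B) all $T_{u_i}$ are isomorphic rooted trees (each is a root with the same number of leaf-children), so $\operatorname{Aut}(\overline{T}_X)$ contains the full symmetric group on $\{u_1,\dots,u_p\}$; every ordering of this antichain is therefore reachable by an automorphism, giving the required transitivity.

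For (i)$\Rightarrow$(ii), argue the contrapositive: given $X\notin\mct$, exhibit a linear extension $\prec_Y$ outside the $\operatorname{Aut}(\overline{T}_X)$-orbit of $\prec_X$. If (B) fails, pick two level-$(n-1)$ inner nodes of distinct offspring-counts and swap their rank positions in $\prec_X$; since tree isomorphisms preserve degrees, the sequence of offspring-counts of level-$(n-1)$ nodes read along a linear extension is an orbit invariant, so the two orders are inequivalent. If (A) fails, let $k^{\ast}$ be the smallest level $<n-1$ with two inner nodes $u_1,u_2$ (both children of the spine node at level $k^{\ast}-1$, by minimality of $k^{\ast}$). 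When $T_{u_1}\not\simeq T_{u_2}$, no automorphism swaps $u_1$ and $u_2$, so switching their ranks in $\prec_X$ produces an inequivalent extension. When $T_{u_1}\simeq T_{u_2}$, the inequality $k^{\ast}<n-1$ forces each $T_{u_i}$ to contain an inner node strictly below $u_i$; invoke the invariant \emph{``pattern of branch-memberships of the inner nodes in $T_{u_1}\cup T_{u_2}$ read along $\prec$, taken up to relabelling of the branches''} and distinguish a grouped extension (all inner nodes of $T_{u_1}$ precede those of $T_{u_2}$) from an alternating one.

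The hard part is the (A)-fails subcase with isomorphic branches at level $k^{\ast}$: one must identify the correct invariant (branch-membership sequence modulo the symmetric action on mutually isomorphic branches) and exhibit two linear extensions producing distinct invariants. The crucial input is $k^{\ast}<n-1$, which guarantees that each $T_{u_i}$ has inner descendants and so provides enough combinatorial room to build both a grouped and an alternating extension.
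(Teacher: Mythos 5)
The paper does not prove this theorem; it only states it with a citation to \cite{P18}, so there is no internal proof to compare against and I assess your argument on its own terms. Your reduction is the right framework: for $X,Y\in\mathfrak D$ a weak similarity is equivalent to a rooted-tree isomorphism $\overline{T}_X\to\overline{T}_Y$ carrying the label order $\prec_X$ to $\prec_Y$ (a known fact from \cite{DP2,P18} that you use without proof or citation but which is correct), every linear extension of the ancestry order on inner nodes is realized by some $Y\in\mathfrak D$, and hence (i) is equivalent to transitivity of $\operatorname{Aut}(\overline{T}_X)$ on the linear extensions. Your proof of (ii)$\Rightarrow$(i) is complete: under (A) the spine is fixed pointwise by every automorphism, the linear extensions correspond exactly to orderings of the level-$(n-1)$ antichain, and (B) supplies the full symmetric group on it.

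The converse, however, has a genuine gap. First, a repairable flaw: transposing two incomparable nodes in a linear extension need not yield a linear extension (an ancestor of the later node may lie strictly between them in the order), so both of your ``swap'' steps should be replaced by, say, inserting $u_2$ at the position that $u_1$ occupies in $\prec_X$ and comparing the position-indexed sequence of out-degrees or rooted-subtree types, which is the invariant you are implicitly using. Second, the assertion that $k^{\ast}<n-1$ forces each $T_{u_i}$ to contain an inner node strictly below $u_i$ is false for an arbitrarily chosen isomorphic pair: level $k^{\ast}$ may carry two isomorphic leaf-stars together with a third, deeper inner node that alone is responsible for $k^{\ast}<n-1$. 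You must first ask whether level $k^{\ast}$ contains two inner nodes with non-isomorphic subtrees (then your first subcase applies to that pair); only when all subtrees rooted at level $k^{\ast}$ are pairwise isomorphic does $k^{\ast}<n-1$ force inner descendants. Third, and decisively, the subcase you yourself flag as ``the hard part'' is announced rather than proved, and the invariant you name is not yet an invariant: if the common parent has a third child $u_3$ with $T_{u_3}\simeq T_{u_1}$, an automorphism may carry $u_1$ onto $u_3$, so membership in $T_{u_1}\cup T_{u_2}$ is not preserved. One must take all $m$ mutually isomorphic sibling branches, form the branch-membership word along the extension modulo the $S_m$-action on letters, check that this is preserved by automorphisms (it is, because the spine above the parent is rigid), and then exhibit a grouped and an alternating extension whose words differ modulo $S_m$ --- for instance, whether two prescribed consecutive positions lie in the same branch is $S_m$-stable and distinguishes them, using that each branch has at least two inner nodes. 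This can all be done, so your plan is viable, but as written the proof of (i)$\Rightarrow$(ii) is incomplete.
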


\noindent\textbf{2.7. Isometry generating spaces.}
Let us denote by $\mathcal{TSI}$ (tree-spectrum isometric) the class of all finite ultrametric spaces~$(X,d)$ which satisfy the following condition: If $(Y,\rho)$ is a finite ultrametric space such that $\overline{T}_X \simeq \overline{T}_Y$ and $\operatorname{Sp}(X) = \operatorname{Sp}(Y)$, then $(X,d)$ and $(Y,\rho)$ are isometric.

Let $T$ be a rooted tree. The \emph{height} of $T$ is the number of edges on the longest path between the root and a leaf of $T$. The height of $T$ will be denoted by $h(T)$. Thus,
\begin{equation*}
h(T)= \max_{v \in L_T} \operatorname{lev}(v).
\end{equation*}

\begin{theorem}[\!\!\cite{DP18}]\label{t3.7}
Let $T$ be a rooted tree with $\delta^+(u)\geqslant 2$ for every internal node $u$. Then the following conditions are equivalent.
\begin{itemize}
\item[\textup{(i)}] The tree $T$ contains exactly one internal node at the levels $1$, $\ldots$, $h(T)-2$ and at most two internal nodes at the level $h(T)-1$. Moreover, if $u$ and $v$ are different internal nodes with
$$
\operatorname{lev}(u)=\operatorname{lev}(v)=h(T)-1,
$$
then $\delta^+(u)=\delta^+(v)$ holds.
\item[\textup{(ii)}] The statement $\overline{T}_X \simeq T$ implies $(X,d) \in \mathcal{TSI}$ for every finite ultrametric space $(X,d)$.
\end{itemize}
\end{theorem}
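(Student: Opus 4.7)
The plan is to apply Theorem~\ref{l3.3} to rephrase ``$(X,d)\in\mathcal{TSI}$'' as a statement about unique determination of labeled representing trees, using throughout that labels of a representing tree are positive on internal nodes, zero on leaves, and strictly decrease along every root-to-leaf path, with the root's label equal to $\max\operatorname{Sp}(X)$.

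For $(\text{i})\Rightarrow(\text{ii})$, assume $T$ satisfies (i) and fix finite ultrametric spaces $(X,d)$ and $(Y,\rho)$ with $\overline{T}_X\simeq T\simeq \overline{T}_Y$ and $\operatorname{Sp}(X)=\operatorname{Sp}(Y)$. Identifying $\overline{T}_X$ and $\overline{T}_Y$ with $T$, denote by $v_k$ the unique internal node at level $k$ for $k=0,1,\ldots,h(T)-2$. Since every root-to-leaf path visits each $v_k$ and labels strictly decrease along such paths, the values $l_X(v_0)>l_X(v_1)>\cdots>l_X(v_{h(T)-2})$ are forced to be the $h(T)-1$ largest nonzero elements of $\operatorname{Sp}(X)$ in decreasing order; the same holds for $l_Y$, and by equality of spectra these labels therefore agree. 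At level $h(T)-1$ either there is a single internal node, whose label is the remaining spectral value and is thus forced, or there are two internal nodes $u_1,u_2$ of common out-degree $p$; in the latter case each $T_{u_i}$ is a star with $p$ leaves, so the swap $u_1\leftrightarrow u_2$ (together with any matching of their leaf children) extends to an automorphism of $T$ that conjugates the two possible assignments of the remaining one or two spectral values to $\{u_1,u_2\}$. In either case $(T_X,l_X)\simeq (T_Y,l_Y)$, so Theorem~\ref{l3.3} yields $(X,d)\cong(Y,\rho)$.

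For $(\text{ii})\Rightarrow(\text{i})$, I argue the contrapositive: if (i) fails, I exhibit two valid labelings $l,l'$ of $T$ with the same underlying spectrum but $(T,l)\not\simeq(T,l')$, so by Theorem~\ref{l3.3} the two resulting ultrametric spaces violate $\mathcal{TSI}$. The failure of (i) splits into three cases: (F1) at least two internal nodes at some level $k\leq h(T)-2$; (F2) at least three internal nodes at level $h(T)-1$; (F3) exactly two internal nodes at level $h(T)-1$ with distinct out-degrees. For (F3), choose fresh distinct positive values $a\neq b$ strictly below the common parent's label, set $l(u_1)=a$, $l(u_2)=b$, and let $l'$ swap them; since any automorphism of $T$ preserves out-degrees, it must fix $u_1$ and $u_2$ individually, and the labels no longer agree. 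For (F2), choose labels on the $m\geq 3$ bottom-level internal nodes so that the multiset of labels on some $\operatorname{Aut}(T)$-orbit of these nodes (orbits here coincide with out-degree classes, since each such node has only leaf children) differs between $l$ and $l'$ while the underlying set of labels is preserved; a case-by-case choice (for instance, use $\{a,a,b\}$ in $l$ and $\{a,b,b\}$ in $l'$ on three orbit-equivalent nodes, or exchange a label across two small orbits so as to keep the spectrum unchanged) always works. For (F1), pick $k$ minimal, so that the unique internal node at level $k-1$ is the common parent of all internal nodes at level $k$, and select two such nodes $u_1,u_2$. If $T_{u_1}\not\simeq T_{u_2}$, the swap of fresh distinct labels at $u_1,u_2$ already suffices, since no automorphism can exchange $u_1$ and $u_2$. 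Otherwise $T_{u_1}\simeq T_{u_2}$ and, because $k\leq h(T)-2$ forces their common height to be $\geq 2$, each $T_{u_i}$ has an internal child $w_i$ at level $k+1$; assign fresh distinct $a\neq b$ to $u_1,u_2$ and fresh distinct $c\neq d$ to $w_1,w_2$, and let $l'$ exchange only $c$ and $d$. The unique labels $a,b$ pin $u_1,u_2$ down under any automorphism, so no automorphism can move $w_1$ to $w_2$, and the labelings are non-isomorphic.

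The main obstacle is case (F1) when $T_{u_1}\simeq T_{u_2}$: the obvious level-$k$ label swap is already realized by an automorphism of $T$, which forces the perturbation to be pushed one level deeper into the subtrees. A subsidiary but recurring technical point in $(\text{ii})\Rightarrow(\text{i})$ is to use ``fresh'' labels, i.e.\ values distinct from all other labels in $T$, at the perturbed nodes, so that each such node is uniquely identified by its label and no hidden automorphism of $T$ can undo the perturbation.
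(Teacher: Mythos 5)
This theorem is imported from \cite{DP18} and the paper under review contains no proof of it, so I can only assess your argument on its own terms. Your overall strategy is the right one: translate ``$(X,d)\in\mathcal{TSI}$'' via Theorem~\ref{l3.3} into uniqueness, up to labeled isomorphism, of valid labelings of $T$ with a prescribed label set; force the labels down the unique spine $v_0,\ldots,v_{h(T)-2}$ for (i)$\Rightarrow$(ii); and, for the converse, exhibit two valid labelings with equal label sets that are not isomorphic as labeled rooted trees. The forward direction and cases (F2), (F3) are essentially correct. One remark on (F2): your assertion that the $\operatorname{Aut}(T)$-orbits of the level-$(h(T)-1)$ internal nodes coincide with their out-degree classes is false in general (orbits also depend on the structure of the tree above these nodes), but your $\{a,a,b\}$ versus $\{a,b,b\}$ construction does not need it --- the cleaner invariant is simply the multiset of labels at level $h(T)-1$, which every labeled isomorphism preserves because rooted-tree isomorphisms preserve levels.

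The genuine gap is in (F1). You select an \emph{arbitrary} pair $u_1,u_2$ of internal nodes at the minimal bad level $k\leqslant h(T)-2$ and claim that $T_{u_1}\simeq T_{u_2}$ together with $k\leqslant h(T)-2$ forces their common height to be at least $2$. This is false: only the \emph{maximum} of the subtree heights over all internal nodes at level $k$ equals $h(T)-k$, so if level $k$ carries three or more internal nodes, two of them may have isomorphic subtrees of height $1$ while a third, deeper subtree accounts for $h(T)$. For such a pair neither of your sub-arguments applies: no internal child $w_i$ exists, and the plain fresh-label swap at $u_1,u_2$ is undone by the automorphism exchanging the two isomorphic sibling subtrees. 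The repair is to choose the pair judiciously rather than arbitrarily: if some two of the subtrees rooted at level $k$ are non-isomorphic, run the fresh-label swap on that pair (no automorphism can carry one root to the other); only if all of them are pairwise isomorphic do they all share the height $h(T)-k\geqslant 2$, and then your deeper perturbation at $w_1,w_2$ goes through. With that restructuring, together with the routine observation that any rooted tree all of whose out-degrees are at least $2$ admits a base labeling leaving room for the fresh values, the proof is complete.
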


\begin{theorem}[\!\!\cite{DP18}]\label{t3.5}
Let $(X,d)$ be a finite ultrametric space with $|X|\geqslant 2$. Suppose that the representing tree $T_X$ has an injective internal labeling. Then the following statements are equivalent.
\begin{enumerate}
\item [\textup{(i)}] The space $(X,d)$ belongs to $\mathcal{TSI}$.
\item [\textup{(ii)}] The equality $\operatorname{lev}(v)=\operatorname{lev}(u)$ implies
$$
\operatorname{lev}(v)=\operatorname{lev}(u)=h(T_X)-1 \text{ and } \delta^+(u)=\delta^+(v)
$$
for all distinct internal nodes $u$, $v \in V(T_X)$.
\end{enumerate}
\end{theorem}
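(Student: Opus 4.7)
The plan is to recast both implications, via Theorem~\ref{l3.3}, as a statement about labelings of $\overline{T}_X$: $(X,d)\in\mathcal{TSI}$ iff every ``valid'' labeling $l'$ of $V(T_X)$---one vanishing on leaves, strictly decreasing along root-to-leaf paths, and with $\{l'(v):v\text{ internal}\}=\Sp{X}\setminus\{0\}$---yields $(T_X,l')\simeq(T_X,l_X)$ as labeled rooted trees. Under the injectivity hypothesis, a cardinality count shows every such $l'$ is automatically a bijection between the internal nodes of $T_X$ and the nonzero points of $\Sp{X}$.

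For (ii) $\Rightarrow$ (i): Under condition (ii), $\overline{T}_X$ has a unique internal node $v_i$ at each level $i=0,\ldots,h-2$ (with $h=h(T_X)$) and $m$ internal nodes $w_1,\ldots,w_m$ of common out-degree $k$ at level $h-1$, all being children of $v_{h-2}$. In any valid $l'$, strict decrease along the spine forces $l'(v_i)$ to be the $(i+1)$-st largest element of $\Sp{X}\setminus\{0\}$, whence $l'(v_i)=l_X(v_i)$. The remaining labels are distributed among $w_1,\ldots,w_m$ via some bijection; since each $T_{w_j}$ is a $k$-star and the $w_j$ are sibling children of $v_{h-2}$, any such bijection is induced by an automorphism of $\overline{T}_X$ that permutes the isomorphic subtrees $T_{w_j}$. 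Hence $(T_X,l')\simeq(T_X,l_X)$ and (i) follows from Theorem~\ref{l3.3}.

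For (i) $\Rightarrow$ (ii): I argue the contrapositive. If (ii) fails, there exist distinct internal nodes $u,v\in V(T_X)$ at a common level $\ell$ with either $\ell\ne h-1$ (``case (a)''), or $\ell=h-1$ and $\delta^+(u)\ne\delta^+(v)$ (``case (b)''). In case (b), assuming $u,v$ share a parent, the relabeling defined by $l'(u):=l_X(v)$, $l'(v):=l_X(u)$, and $l'=l_X$ elsewhere, is valid (neither node has internal descendants, the parent's label is unchanged), and any putative isomorphism $\Psi$ must send $u$ to $v$, contradicting $\delta^+(u)\ne\delta^+(v)$ since automorphisms preserve out-degrees. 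If $u,v$ have distinct parents, those parents are themselves distinct internal nodes at level $h-2$, reducing to case (a). In case (a), pick $\ell^*<h-1$ minimal among levels carrying multiple internal nodes; the unique internal node $p$ at level $\ell^*-1$ has internal children $u_1,\ldots,u_{p'}$ ($p'\geq 2$) at level $\ell^*$. Since $\ell^*<h-1$, some $u_i$ has an internal descendant $w$ that is itself a deepest internal of $T_{u_i}$ (having no internal descendants of its own). One then builds $l'$ by swapping $l_X(w)$ with $l_X(u_j)$ for a carefully chosen sibling $u_j$, producing a valid labeling in which a level-$\ell^*$ node carries a label originally at a strictly deeper level. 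No automorphism of $\overline{T}_X$ can realize the resulting correspondence, since automorphisms of rooted trees preserve levels.

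The main technical obstacle is the validity of the cross-level swap in case (a): the exchanged labels $l'(w)=l_X(u_j)$ and $l'(u_j)=l_X(w)$ must still satisfy strict decrease along every root-to-leaf path through $w$ and $u_j$. Concretely, one must verify that $l_X(u_j)$ is smaller than the label of the parent of $w$, and that every internal descendant of $u_j$ carries a label below $l_X(w)$. I expect the argument to split into sub-cases according to whether some sibling $u_j$ is itself a star at level $\ell^*$ and to the relative order of $l_X(u_j)$ versus the labels on the path from $u_i$ down to $w$; when no immediate choice of $u_j$ works, one iterates the argument by passing to the subtree $T_{u_i}$ and invoking induction on $h(T_X)$, constructing a nontrivial relabeling inside $T_{u_i}$ that extends to a nontrivial relabeling of $T_X$ by the same level-preservation principle.
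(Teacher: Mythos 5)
The paper only quotes Theorem~\ref{t3.5} from \cite{DP18} and contains no proof, so I am judging your argument on its own merits. Your reduction via Theorem~\ref{l3.3} to the statement that every admissible relabeling of $\overline{T}_X$ (zero exactly on leaves, strictly decreasing along root-to-leaf paths, internal label set equal to $\Sp{X}\setminus\{0\}$) must be isomorphic to $(T_X,l_X)$ as a labeled rooted tree is correct, and both your proof of (ii)$\Rightarrow$(i) and case (b) of the converse are sound. The genuine gap is case (a). The single swap $l_X(w)\leftrightarrow l_X(u_j)$ can indeed be inadmissible: take the root labeled $100$ with internal children $u_1$ (labeled $50$, carrying a chain of internal descendants labeled $40$ and $30$) and $u_2$ (labeled $60$, a star). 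Here $w$ is the node labeled $30$, the only available sibling is $u_2$, and moving $60$ onto $w$ breaks strict decrease below the node labeled $40$. Worse, your fallback --- ``passing to $T_{u_i}$ and invoking induction on $h(T_X)$'' --- cannot repair this example, because $T_{u_1}$ has a unique internal node at each of its own levels and hence admits no nontrivial relabeling by itself; the failure of (ii) lives only in the sibling pair $u_1,u_2$, which recursion into a single subtree never sees. So the sketch does not close.

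The fix is to stop perturbing $l_X$ and build admissible labelings from scratch. Let $\ell^*$ be the least level carrying two internal nodes; by minimality the internal nodes at levels $0,\dots,\ell^*-1$ form a chain and are forced to receive the largest labels in order under every admissible labeling, leaving labels $\lambda_1>\lambda_2>\cdots$ to be distributed over the forest of internal nodes of $T_{u_1},\dots,T_{u_{p'}}$ by an arbitrary order-reversing bijection. Since $\ell^*\leqslant h(T_X)-2$, some $u_i$ has an internal child $c$, and $p'\geqslant 2$ supplies a sibling $u_j$. Greedy assignment (always give the next label to a currently maximal unassigned node) produces one admissible labeling sending $\lambda_1\mapsto u_i$, $\lambda_2\mapsto c$ (level $\ell^*+1$) and another sending $\lambda_1\mapsto u_i$, $\lambda_2\mapsto u_j$ (level $\ell^*$). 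At most one of the two places $\lambda_2$ at the level where $l_X$ places it, and the other can therefore not equal $l_X\circ\Psi$ for any (level-preserving) automorphism $\Psi$, which is exactly the contradiction you want --- with none of the sub-case analysis you anticipate.
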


\begin{corollary}\label{c5.3}
Let $(X, d)$ be an ultrametric space with $|X| \leqslant 4$. Then $(X, d)$ belongs to $\mathcal{TSI}$.
\end{corollary}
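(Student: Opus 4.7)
The plan is to apply Theorem~\ref{t3.7} after enumerating the few possible shapes of the underlying tree $\overline{T}_X$ when $|X|\leqslant 4$. Recall that in any representing tree every internal node has out-degree at least~$2$ (this is built into the construction via the complete multipartite decomposition from Theorem~\ref{t13}), so the standing hypothesis of Theorem~\ref{t3.7} is automatic and it suffices to verify condition~(i) of that theorem on the shape $T=\overline{T}_X$. The basic counting device is that for any rooted tree with leaf set $L$ and internal node set $I$ one has $\sum_{u\in I}(\delta^{+}(u)-1)=|L|-1$, which bounds both the number of internal nodes and the height of $\overline{T}_X$ in terms of $|X|$.

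The cases $|X|\leqslant 2$ are trivial, since either $X$ is a singleton or $\overline{T}_X$ has height~$1$ and then the two clauses of Theorem~\ref{t3.7}(i) are vacuous. For $|X|=3$ the counting identity gives $\sum_{u\in I}(\delta^{+}(u)-1)=2$, and the only two admissible shapes of $\overline{T}_X$ are: a root of out-degree~$3$ (height~$1$) and a root of out-degree~$2$ whose single internal child has out-degree~$2$ (height~$2$). In both cases the range of levels $\{1,\dots,h(T)-2\}$ is empty and the unique internal node at level $h(T)-1$ trivially satisfies the clause of Theorem~\ref{t3.7}(i).

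For $|X|=4$ the identity yields $\sum_{u\in I}(\delta^{+}(u)-1)=3$, so there are at most three internal nodes and $h(\overline{T}_X)\leqslant 3$. Splitting by the partitions $3$, $2{+}1$ and $1{+}1{+}1$ of this quantity produces exactly five shapes: (1) the root of out-degree~$4$; (2) the root of out-degree~$3$ having one internal child of out-degree~$2$; (3) the root of out-degree~$2$ having one internal child of out-degree~$3$; (4) the root of out-degree~$2$ both of whose children are internal of out-degree~$2$; and (5) the height-$3$ shape obtained by repeatedly attaching one leaf and one internal child of out-degree~$2$. In (1), (2), (3) and (5) there is at most one internal node at each level strictly below the root, so condition~(i) of Theorem~\ref{t3.7} is immediate; in shape~(4), where two internal nodes sit at level $h(T)-1=1$, both have out-degree~$2$, so the required equality of out-degrees also holds.

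The main (and only) obstacle is the enumeration itself: one must be careful to catch shape~(4), where two internal nodes appear at the same level, and to observe that the equality $\delta^{+}(u)=\delta^{+}(v)$ is forced there simply because only four leaves are available to distribute. Once condition~(i) of Theorem~\ref{t3.7} has been checked in every case, that theorem delivers $(X,d)\in\mathcal{TSI}$.
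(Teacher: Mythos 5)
Your proof is correct and is exactly the intended derivation: the paper states the corollary without proof as an immediate consequence of Theorem~\ref{t3.7}, and your enumeration of the five admissible shapes of $\overline{T}_X$ for $|X|=4$ (via $\sum_{u\in I}(\delta^{+}(u)-1)=|L|-1$), together with the observation that only shape~(4) has two internal nodes at level $h(T)-1$ and that their out-degrees are forced to be equal, correctly verifies condition~(i) of that theorem in every case. Note that Theorem~\ref{t3.7} is indeed the right tool here rather than Theorem~\ref{t3.5}, since in shape~(4) the internal labeling need not be injective.
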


\begin{remark}
The structures of representing trees of spaces from the classes $\mathcal{TSI}$ and $\tilde{\mathfrak{R}}$ coincide.
\end{remark}

\noindent\textbf{2.8. Spaces admitting ball-preserving mappings.} Let $X$ and $Y$ be nonempty metric spaces. A mapping $F\colon X\to Y$ is ball-preserving if the membership relations
\begin{equation*}
F(Z)\in \textbf{B}_Y \quad \text{and}\quad F^{-1}(W)\in \textbf{B}_X
\end{equation*}
hold for all balls $Z\in \textbf{B}_X$ and all balls $W\in \textbf{B}_Y$, where $F(Z)$ is the image of $Z$ under the mapping $F$ and $F^{-1}(W)$ is the preimage of $W$ under this mapping.

For every finite nonempty ultrametric space $X$ denote by $\mathfrak{F}_1^*(X)$ the class of all finite nonempty ultrametric spaces $Y$ for which there are ball-preserving mappings $F \colon X \to Y$. The next our goal is to describe the finite ultrametric spaces \((X, d)\) which admit ball-preserving mapping \(F \colon Y \to Z\) for every nonempty \(Y \subseteq X\) and each nonempty \(Z \subseteq Y\), i.e.,
\begin{equation*}
(Z, d|_{Z \times Z}) \in \mathfrak{F}_1^{*} (Y)
\end{equation*}
holds for all nonempty \(Y \subseteq X\) and \(Z \subseteq Y\). 

\begin{theorem}[\!~\cite{DP19}]\label{th4.21}
Let $(X, d)$ be a finite ultrametric space with $|X| \geqslant 2$. Then the following statements are equivalent.
\begin{itemize}
\item[\textup{(i)}] We have \((Z, d|_{Z \times Z}) \in \mathfrak{F}_1^*(Y)\) for every nonempty $Y \subseteq X$ and every nonempty \(Z \subseteq Y\).
\item[\textup{(ii)}] $T_X$ is strictly binary and one of the following conditions is satisfied.
\begin{itemize}
\item[\textup{($\mathrm{i_1}$)}] For every inner node $v$ of $T_X$ there is a leaf $\{x\}$ of $T_X$ such that $\{x\}$ is a direct successor of $v$.
\item[\textup{($\mathrm{i_2}$)}] $X$ is the unique node of $T_X$ for which both direct successors are inner nodes of $T_X$.
\end{itemize}
\end{itemize}
\end{theorem}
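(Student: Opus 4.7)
The plan is to prove both implications separately, handling (i)$\Rightarrow$(ii) by contrapositive with two concrete counterexamples and (ii)$\Rightarrow$(i) by an explicit construction exploiting the caterpillar shape that (ii) forces on $T_X$.

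For (i)$\Rightarrow$(ii), if $T_X$ is not strictly binary then Proposition~\ref{p10} provides an equilateral triangle $\{a,b,c\}\subseteq X$; taking $Y=\{a,b,c\}$ and $Z=\{a,b\}$, the only balls of $Y$ are singletons and $Y$ itself, so any ball-preserving $F\colon Y\to Z$ would force $F^{-1}(\{a\})\cup F^{-1}(\{b\})$ to be a bipartition of $Y$ into proper balls, which does not exist. If instead $T_X$ is strictly binary but both ($\mathrm{i}_1$) and ($\mathrm{i}_2$) fail, a short case check produces a non-root inner node $u$ both of whose direct successors are inner (the failure of ($\mathrm{i}_1$) gives some such node; if that node is $X$, the failure of ($\mathrm{i}_2$) provides another). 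Fix any $w\in X\setminus L_{T_u}$ and set $Y=L_{T_u}\cup\{w\}$ and $Z=L_{T_u}$. The distance from $w$ to every point of $L_{T_u}$ equals a common value---the label of the least common ancestor of $w$ and $u$ in $T_X$---so in $T_Y$ the root's children are $\{w\}$ and $L_{T_u}$, and the only balls of $Y$ containing $w$ are $\{w\}$ and $Y$. Assuming a ball-preserving (hence surjective) $F\colon Y\to Z$ exists and writing $z_0=F(w)$, the constraint $F^{-1}(\{z_0\})\in\{\{w\},Y\}$ forces $F^{-1}(\{z_0\})=\{w\}$. Since $u$ has no leaf successor, the leaf $\{z_0\}$ of $T_Z=T_u$ has a parent $p\neq u$, so $B:=L_{T_p}$ is a proper subset of $Z$ containing $z_0$; then $F^{-1}(B)$ is a ball of $Y$ strictly containing $\{w\}$, forcing $F^{-1}(B)=Y$ and hence $F(Y)\subseteq B\subsetneq Z$, contradicting surjectivity.

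For (ii)$\Rightarrow$(i), the key observation is that strictly binary plus ($\mathrm{i}_1$) amounts to $T_X$ being a caterpillar, while strictly binary plus ($\mathrm{i}_2$) amounts to $T_X$ having a root with two caterpillar subtrees; both shapes are hereditary under taking subspaces, since the representing tree of a subspace is obtained from $T_X$ by pruning and contracting degree-two chains. It therefore suffices to construct a ball-preserving $F\colon Y\to Z$ when $T_Y$ has one of these shapes. In the caterpillar case, enumerate $Y=\{y_1,\dots,y_n\}$ so that $\{y_i\}$ is the leaf-child at level $i-1$ of $T_Y$ and write $Z=\{y_{i_1},\dots,y_{i_m}\}$ with $i_1<\cdots<i_m$; then the prescription $F(y_l)=y_{i_l}$ for $l<m$ and $F(y_l)=y_{i_m}$ for $l\geq m$ is ball-preserving, because every ball of $Y$ is either a singleton or a tail $\{y_{k+1},\dots,y_n\}$, so a direct index computation shows that the forward image of each ball of $Y$ is a ball of $Z$ and the preimage of each ball of $Z$ is a tail of $Y$. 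In the ($\mathrm{i}_2$) case, write $Y=Y^1\cup Y^2$ and $Z=Z^1\cup Z^2$ with $Z^i=Z\cap Y^i$: when both $Z^i$ are nonempty, glue the two caterpillar maps $F_i\colon Y^i\to Z^i$; when $Z^1=\emptyset$, let $z_0$ be the leaf-child of the root of $T_Z$, send all of $Y^1$ constantly to $z_0$, and use the caterpillar construction for $F|_{Y^2}\colon Y^2\to Z\setminus\{z_0\}$.

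The most delicate step will be verifying the ball condition in the ($\mathrm{i}_2$) subcase with $Z\subseteq Y^2$: the choice of $z_0$ as the leaf-child of the root of $T_Z$ is essential, since then the only balls of $Z$ containing $z_0$ are $\{z_0\}$ and $Z$ itself, so the only preimages touching $Y^1$ are $F^{-1}(\{z_0\})=Y^1$ and $F^{-1}(Z)=Y$, both of which are balls of $Y$. A less careful choice---$z_0$ farther inside $T_Z$---would produce an intermediate ball $B'$ of $Z$ with $\{z_0\}\subsetneq B'\subsetneq Z$ whose preimage would combine $Y^1$ with a proper sub-ball of $Y^2$, and this union is not a ball of $Y$.
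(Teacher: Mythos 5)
The paper does not actually prove Theorem~\ref{th4.21}: it is imported verbatim from~\cite{DP19} as part of the survey in Section~\ref{s5}, so there is no internal proof to compare yours against, and I can only judge the proposal on its own terms. It holds up. In the direction (i)$\Rightarrow$(ii), the equilateral-triangle obstruction (supplied by Proposition~\ref{p10} when $T_X$ is not strictly binary) is handled correctly, and your case check does extract a non-root inner node $u$ with two inner children when ($\mathrm{i_1}$) and ($\mathrm{i_2}$) both fail; the subsequent argument is sound, including the tacit use of surjectivity of ball-preserving maps (preimages of singular balls must be nonempty balls) and the step where $F^{-1}(L_{T_p})$ is forced to equal $Y$, contradicting surjectivity. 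In the converse direction, your observation that (ii) forces $T_Y$ to be a caterpillar or a root with two caterpillar subtrees, and that these shapes survive passage to subspaces, is exactly the mechanism the paper itself invokes in Section~6 (Lemma~\ref{l5.11} together with Proposition~\ref{p10}(iii)) to show the class is hereditary; the explicit tail map and the gluing in the ($\mathrm{i_2}$) case, with the constant value on $Y^1$ chosen to be the leaf-child of the root of $T_Z$, verify both the image and the preimage conditions, and your closing remark correctly identifies why that choice of $z_0$ is the one that works. Two cosmetic points only: the leaf-child of the inner node at level $i-1$ of a caterpillar lies at level $i$, and the two deepest leaves share a parent, so your enumeration should be read as ``order the leaves by depth, breaking the final tie arbitrarily''; and a subspace of an ($\mathrm{i_2}$)-space may satisfy ($\mathrm{i_1}$) rather than ($\mathrm{i_2}$), so ``both shapes are hereditary'' slightly overstates what is true, though your construction covers both outcomes and nothing breaks.
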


\section{Class preserving subspaces}

The aim of this section is to distinguish classes of ultrametric spaces such that for every space $X$ from the given class every subspace $Y$ of $X$ also belongs to this class.
In this case we shall say that the subspace $Y$ inherits the class.

Let us first consider consecutively the classes of spaces considered in Section~\ref{s5}.

Corollary~\ref{c22} states that every nonempty subspace of the space $X\in \mathfrak U$ also belongs to the class $\mathfrak U$.

Consider the class of ultrametric spaces $X$ for which all labels of different internal nodes of $T_X$ are different. Let $x_1$ be a leaf of $T_X$ and $v$ be its direct predecessor. There are only two possibilities for the node $v$:
\begin{itemize}
  \item [\textup{(i)}] $v$ has only two direct successors $x_1$ and $x_2$,
  \item [\textup{(ii)}] $v$ has more than two direct successors.
\end{itemize}
Consider a transformation of the representing tree $T_X$ to the representing tree $T_Y$, where $Y=X\setminus \{x_1\}$.
In case (i) the removal of the leaf $x_1$ from the tree will entail the removal of the edges $\{v,x_1\}$ and $\{v,x_2\}$. The node $x_2$ replaces $v$ and the label $l(v)$ disappears.
In case (ii) the removal of the leaf $x_1$ entails only the removal of the edge $\{x_1,v\}$.  In each case all labels of different internal nodes remain different.

Let us consider a class of ultrametric spaces $X$ for which $T_X$ is strictly binary. According to condition (iii) of Proposition~\ref{p10} the equivalent condition is that there are no equilateral triangles in the space $X$. It is clear that there are also no any equilateral triangles in every subspace $Y$ of $X$. Thus $Y$ inherits the class.

It is clear that a removal of one leaf from a strictly $n$-ary tree, \mbox{$n\geqslant 3$}, makes it not strictly $n$-ary. Thus, this class does not have the desired property.

According to Lemma~\ref{l5.11} weak similarity generating spaces $\mathfrak{R}$ and $\tilde{\mathfrak{R}}$ have the desired property since in each case  representing trees have exactly one inner node at each level except the last level.

The class $\mct$ from subsection 2.6 as well as the class $\mathcal{TSI}$ from subsection 2.7  does not have the desired property. The reason is that in general case in the representing trees there are two different internal nodes $u$ and $v$  at the level $h(T_X)-1$ and the number of offsprings of $u$ and $v$ are equal. A removal of one of the offsprings violates the structure of the tree.

It is easy to show that spaces admitting ball-preserving mappings have the desired property (Theorem~\ref{th4.21}). Case (\emph{$i_1$}) follows from Lemma~\ref{l5.11} and condition (iii) of Proposition~\ref{p10}. In case (\emph{$i_2$}) it is possible to apply these lemma and proposition to the subtrees $T_u$, $T_v$, where $u$ and $v$ are direct successor of the root $X$ of the tree $T_X$.

It is easy to see that finite homogeneous ultrametric spaces $X$ do not have the desired property. Clearly, a removal of one leaf from the representing tree $T_X$ violates condition (ii) of Theorem~\ref{t23}.

It is easy to construct a representing tree $T_X$ with all leaves at the same level such that a removal of one leaf violates this property.
For example, it suffices to consider a perfect strictly binary tree $T_X$ with $|X|\geqslant 4$.

Let $X$ be an ultrametric space such that all  labels of inner nodes of $T_X$ being at the same level are equal. A removal of only one leaf of $T_X$ can lead only to two possibilities: all labels remain at their places or one label disappears. In any case, the new tree preserves this property.

Clearly, the spaces having perfect strictly $n$-ary representing trees, $n\geqslant 2$, do not have the desired property.

According to Theorem~\ref{t1.4} an ultrametric space $X$ is generated by unrooted labeled tree if and only if in the representing tree $T_X$ every inner node has at least one direct successor which is a leaf. In order to show that such spaces do not have the desired property it suffices to consider a space $X$ such that in the representing tree $T_X$ there exists at least one inner node with exactly one successor which is a leaf.

Let us summarize the above considerations.  The classes of ultrametric spaces such that for every space $X$ from the given class every subspace of $X$ also belongs to this class:
\begin{itemize}
  \item The class $\mathfrak U$.
  \item The class of finite ultrametric spaces $X$ for which all labels of different internal nodes of $T_X$ are different.
  \item The class of finite ultrametric spaces $X$ for which $T_X$ is strictly binary.
  \item The class $\mathfrak R$.
  \item The class $\tilde{\mathfrak R}$.
  \item The class of finite ultrametric spaces admitting ball-preserving mappings.
  \item The class of finite ultrametric spaces $X$ for which all labels of inner nodes of  $T_X$ being at the same level are equal.
\end{itemize}
The classes which does not fulfil this condition:
\begin{itemize}
  \item The class of finite ultrametric spaces $X$ for which $T_X$ is strictly $n$-ary, $n\geqslant 3$.
  \item The class $\mct$.
  \item The class $\mathcal{TSI}$.
  \item The class of finite homogeneous ultrametric spaces.
  \item The class of finite ultrametric spaces $X$ for which all leaves of $T_X$ are on the same level.
  \item The class of finite ultrametric spaces having perfect strictly $n$-ary trees.
  \item The class of finite ultrametric spaces generated by unrooted labeled trees.
\end{itemize}

\bigskip

CONTACT INFORMATION

\medskip
Evgeniy Aleksandrovych Petrov \\

Institute of Applied Mathematics and Mechanics of the NAS of Ukraine, Slavyansk, Ukraine \\
E-Mail: eugeniy.petrov@gmail.com
\end{document}